\theoremstyle{plain}
\newtheorem{claim}{Claim}
\newtheorem{definition}{Definition}
\newtheorem{lemma}{Lemma}
\newtheorem{proposition}{Proposition}
\newtheorem{remark}{Remark}
\newtheorem{theorem}{Theorem}
\numberwithin{equation}{section}
\begin{document}
\title[]{Rigidity for the non self-dual Chern--Simons--Schr{\"o}dinger equation at the level of the soliton}

\author{Benjamin Dodson}
\date{\today}

\begin{abstract}
In this paper we prove a rigidity result for a solution to the non self-dual Chern--Simons--Schr{\"o}dinger equation at the level of the soliton.
\end{abstract}
\maketitle

\section{Introduction}
In this paper we prove rigidity for blowup solutions for the equivariant Chern--Simons--Schr{\"o}dinger equation,
\begin{equation}\label{1.1}
i u_{t} + \Delta u = \frac{2m}{r^{2}} A_{\theta}[u] u + A_{0}[u] u + \frac{1}{r^{2}} A_{\theta}[u]^{2} u - g |u|^{2} u, \qquad u : \mathbb{R} \times \mathbb{R}^{2} \rightarrow \mathbb{C}.
\end{equation}
where
\begin{equation}\label{1.2}
\aligned
A_{\theta}[u](t, r) &= -\frac{1}{2} \int_{0}^{r} |u(t, s)|^{2} s ds, \\
A_{0}[u](t, r) &= -\int_{r}^{\infty} (m + A_{\theta}[u](t, s)) |u(t, s)|^{2} \frac{ds}{s}.
\endaligned
\end{equation}
Here, we are in the equivariant case, which imposes the equivariant symmetry on the scalar field $\phi$,
\begin{equation}\label{1.3}
\phi(t, x) = u(t, r) e^{im \theta}, \qquad m \in \mathbb{Z}.
\end{equation}

This problem shares a number of similarities with the nonlinear Schr{\"o}dinger equation,
\begin{equation}\label{1.4}
i u_{t} + \Delta u = \alpha |u|^{2} u.
\end{equation}
Indeed, solutions to $(\ref{1.1})$ and $(\ref{1.2})$ conserve the quantities mass,
\begin{equation}\label{1.5}
M(u(t)) = \int |u(t, x)|^{2} dx = M(u(0)),
\end{equation}
and energy
\begin{equation}\label{1.6}
E(u(t)) = \frac{1}{2} \int |\partial_{r} u|^{2} + \frac{1}{2} \int (\frac{m + A_{\theta}[u]}{r})^{2} |u|^{2} - \frac{g}{4} \int |u|^{4}.
\end{equation}
Furthermore, $(\ref{1.1})$ is a mass-critical problem, as is $(\ref{1.4})$. Indeed, equation $(\ref{1.1})$ has the scaling symmetry
\begin{equation}\label{1.7}
u(t, x) \mapsto \lambda u(\lambda^{2} t, \lambda x), \qquad u_{0}(x) \mapsto \lambda u_{0}(\lambda x), \qquad \lambda > 0.
\end{equation}
The general Chern--Simons--Schr{\"o}dinger equation is locally well-posed \cite{liu2014local} for small data in $H_{x}^{s}(\mathbb{R}^{2})$, $s > 0$. The case when $s = 0$ is unknown. For the equivariant Chern--Simons--Schr{\"o}dinger equation, let $L_{m}^{2}$ denote the space of functions in $L^{2}(\mathbb{R}^{2})$ that satisfy $(\ref{1.3})$. Similarly, we can define the equivariant Sobolev spaces.
\begin{definition}[Equivariant Sobolev spaces]\label{d1.1}
Let $m \in \mathbb{Z}$. For each $s \geq 0$, define the function space $H_{m}^{s}$ to be the Sobolev space of functions $f \in H_{x}^{s}$ that admit the decomposition $f(x) = f(r, \theta) = e^{im \theta} u(r)$.
\end{definition}
Crucially, like the mass-critical nonlinear Schr{\"o}dinger equation, $(\ref{1.1})$ enjoys the virial identity
\begin{equation}\label{1.7.1}
\frac{d^{2}}{dt^{2}} \frac{1}{4} \int |x|^{2} |u|^{2} = \frac{d}{dt} \int Im[\bar{u} r \partial_{r} u] = 4 E[u],
\end{equation}
and the pseudoconformal transformation symmetry. If $u$ solves $(\ref{1.1})$ then
\begin{equation}\label{1.8.1}
\frac{1}{t} \overline{u(t, \frac{x}{t}) e^{i \frac{|x|^{2}}{4t}}},
\end{equation}
also solves $(\ref{1.1})$. Of course, by standard time translation and time reversal symmetry arguments, it is possible to replace the $t$ in $(\ref{1.8.1})$ by $T - t$. In this case, the pseudoconformal transformation may be abbreviated $PC_{T}$.\medskip

Rewriting the energy,
\begin{equation}\label{1.8}
E[u(t)] = \frac{1}{2} \int |\partial_{r} u - \frac{m + A_{\theta}[u]}{r} u|^{2} + \frac{1 - g}{4} \int |u|^{4}.
\end{equation}
Thus, when $g < 1$, $(\ref{1.1})$ resembles a defocusing nonlinear Schr{\"o}dinger equation ($\alpha > 0$), when $g > 1$, $(\ref{1.1})$ resembles a focusing nonlinear Schr{\"o}dinger equation ($\alpha < 0$), and when $g = 1$, $(\ref{1.1})$ is called a self-dual Chern--Simons--Schr{\"o}dinger equation.

\begin{theorem}\label{t1.3}
Let $g < 1$ and $m \in \mathbb{Z}$. Then $(\ref{1.1})$ is globally well-posed in $L_{m}^{2}$ and solutions scatter both forward and backward in time.
\end{theorem}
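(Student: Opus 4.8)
The proof will follow the now-standard concentration-compactness/rigidity roadmap of Kenig--Merle, adapted to the equivariant setting. First I would establish the small-data theory: for $\|u_0\|_{L^2_m}$ sufficiently small, (1.1) is globally well-posed and scatters, via a contraction in a Strichartz-type space. The covariant structure introduces the nonlocal terms $A_\theta$, $A_0$, but these are cubic and quintic in $u$ with favorable $r$-weights, so they can be absorbed into the Strichartz estimates; the equivariance (and the resulting Hardy-type control $\|u/r\|$ near the origin when $m\neq 0$) is what keeps the gauge potentials under control. Then, by the standard argument, one defines the critical mass $M_c = \sup\{M : \text{all data with }M(u_0)<M \text{ scatter}\}$ and assumes toward a contradiction that $M_c<\infty$.

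The next step is the construction of a minimal-mass blowup (critical element). Using a profile decomposition adapted to $L^2_m$ — here one must check that the symmetry group reduces to scaling $\lambda u(\lambda^2 t,\lambda x)$ and time/space translations in $t$ only (no spatial translations survive equivariance, which is a welcome simplification) — together with the perturbation theory, one extracts a solution $u_c$ with $M(u_c)=M_c$ whose orbit $\{\lambda(t)u_c(t,\lambda(t)\cdot)\}$ is precompact in $L^2_m$. The bulk of the work is then the rigidity theorem: such a $u_c$ must be identically zero, a contradiction. Here the virial identity (1.7.1) is the key tool. Because the energy appears on the right-hand side of $\frac{d^2}{dt^2}\frac14\int|x|^2|u|^2 = 4E[u]$, and because compactness forces $\int|x|^2|u_c|^2$ to stay bounded (after rescaling), one concludes $E[u_c]=0$; but when $g<1$ the rewritten energy (1.8) is a sum of two nonnegative terms, so $E[u_c]=0$ forces $\int|u_c|^4=0$ and $\partial_r u_c = \frac{m+A_\theta[u_c]}{r}u_c$, whence $u_c\equiv 0$. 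One must be careful to handle the cases of finite-time versus infinite-time blowup separately, using $PC_T$ to conjugate a pseudoconformal (finite-time) blowup scenario into a global one when needed, and to justify that the a priori bound on $\int|x|^2|u_c|^2$ actually holds — this typically requires a truncated virial argument and the precompactness of the orbit to control the error from the cutoff.

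The main obstacle I anticipate is making the variational/virial rigidity airtight in the presence of the nonlocal gauge terms. Unlike the pure NLS case, the Morawetz/virial computation here generates additional terms from $A_\theta$ and $A_0$ when one commutes with the multiplier $|x|^2$ or its truncations; one must verify that these terms have a definite sign or are controlled by the nonnegative energy, exploiting the precise structure in (1.2) (in particular the sign $A_\theta[u]=-\frac12\int_0^r|u|^2 s\,ds \le 0$ and the identity (1.8)). A second, more technical, obstacle is the profile decomposition and nonlinear profile construction at the $L^2$ level: because $L^2$-critical well-posedness for the general (non-equivariant) problem is open, one leans heavily on the equivariant reduction, and the profiles living at different scales must be shown to decouple despite the long-range nonlocal interaction through $A_0$, which is not an integral over a compact region. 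Establishing this decoupling — essentially an orthogonality estimate for the gauge potentials of bubbles at separated scales — is where I expect most of the delicate analysis to lie.
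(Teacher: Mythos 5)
The paper itself does not prove Theorem \ref{t1.3}; it quotes it from \cite{liu2016global}, whose argument follows the concentration-compactness roadmap you describe. Your skeleton --- small-data theory in a Strichartz space, an $L^2_m$ profile decomposition whose symmetry group contains only scaling and phase (no translations or Galilean boosts survive equivariance), extraction of a minimal-mass almost periodic critical element, and rigidity via the sign of the energy for $g<1$ --- is the correct one, and the final step is sound as far as it goes: since $(\ref{1.8})$ writes the energy as a sum of nonnegative terms when $g<1$, $E[u_c]=0$ does force $u_c\equiv 0$.

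The genuine gap is in how you propose to reach $E[u_c]=0$. A critical element for a mass-critical problem lives only in $L^2_m$; neither $E[u_c]$ nor the variance $\int|x|^2|u_c|^2$ is finite a priori, so the virial identity $(\ref{1.7.1})$ cannot be invoked, and the ``truncated virial'' escape hatch does not close by itself: the truncation errors are controlled by $\|\nabla u_c\|_{L^2}$ on annuli and by the behavior of the scaling function $\lambda(t)$ (which may tend to $0$ or $\infty$), none of which $L^2$-precompactness of the orbit provides. What the cited proof actually does --- and what the remark at the end of the paper's introduction records --- is first reduce the critical element to one of three scenarios for $N(t)=\lambda(t)^{-1}$ (soliton-like, double cascade, self-similar) following \cite{killip2009cubic}, and then prove \emph{additional regularity}: the almost periodic solution lies in $H^s$ for $s\le 2$ uniformly after rescaling, via the long-time Strichartz / double-Duhamel machinery as in \cite{dodson2019defocusing}. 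Only with that regularity in hand is the energy finite and the truncated virial--Morawetz computation justified, yielding $E=0$ and the contradiction. This regularity step is the technical heart of the proof and is absent from your outline; without it the rigidity argument is not merely delicate but undefined. Your other flagged concern --- decoupling of the nonlocal potentials $A_\theta$, $A_0$ across profiles at separated scales --- is real but comparatively mild, since $\|A_\theta[u]\|_{L^\infty}\lesssim\|u\|_{L^2}^2$ and the nonlinearity obeys the purely $L^4_{t,x}$-based bounds of Lemma \ref{l3.4}, which is what makes the perturbation theory and profile decoupling go through.
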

\begin{proof}
See \cite{liu2016global}. Compare to \cite{dodson2019defocusing} for the mass-critical NLS.
\end{proof}

\begin{theorem}\label{t1.4}
Let $g > 1$ and $m \in \mathbb{Z}_{+}$. Then there exists a constant $c_{m, g} > 0$ such that if $u_{0} \in L_{m}^{2}$ with $\| u_{0} \|_{L^{2}}^{2} < c_{m, g}$, then $(\ref{1.1})$ is globally well-posed in $L_{m}^{2}$ and scatters forward and backward in time. Moreover, the minimum charge of a nontrivial standing wave solution in the class $L_{t}^{\infty} L_{m}^{2}$ is equal to $c_{m, g}$.
\end{theorem}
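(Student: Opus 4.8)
The plan is to pin down $c_{m,g}$ through a sharp variational problem and then, below that threshold, to run the concentration--compactness/rigidity scheme familiar from the mass-critical NLS -- as in the defocusing analysis underlying Theorem~\ref{t1.3} and \cite{dodson2019defocusing} -- with the variational coercivity below supplying the sign information that the defocusing problem gets for free.

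\emph{Step 1: the variational threshold and the standing-wave characterization.} Set
\[
c_{m,g}\;:=\;\inf\bigl\{\,\|u\|_{L^2}^2 \;:\; u\in H^1_m\setminus\{0\},\ E[u]=0\,\bigr\}.
\]
Since $E[\theta u]>0$ for $\theta>0$ small (the quadratic part of $E$ dominates there) while the mass-critical scaling \eqref{1.7} leaves the charge fixed and multiplies $E$ by $\lambda^2$, an intermediate-value argument shows this value also equals $\inf\{\|u\|_{L^2}^2:E[u]\le 0,\ u\ne 0\}$; in particular $\|u\|_{L^2}^2<c_{m,g}$ forces $E[u]>0$. One checks $c_{m,g}>0$: since $m\ge 1$ and $A_\theta[u]\le 0$ with $\|A_\theta[u]\|_{L^\infty}\lesssim\|u\|_{L^2}^2$, once the charge is small enough one has $m+A_\theta[u]\ge\tfrac m2>0$ pointwise, so \eqref{1.6} gives $E[u]\ge\tfrac18\|\nabla\phi\|_{L^2}^2-\tfrac g4\|\phi\|_{L^4}^4$, and the two-dimensional cubic Gagliardo--Nirenberg inequality $\|\phi\|_{L^4}^4\lesssim\|\phi\|_{L^2}^2\|\nabla\phi\|_{L^2}^2$ then forces $E[u]>0$ for all nonzero $u$ of sufficiently small charge; a compactness argument upgrades this to coercivity strictly below the threshold, $E[u]\gtrsim_{m,g,\delta}\|\nabla\phi\|_{L^2}^2$ whenever $\|u\|_{L^2}^2\le(1-\delta)c_{m,g}$. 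Next, a minimizer exists: by a standard concentration--compactness analysis -- after rescaling by \eqref{1.7}, vanishing is excluded because $E[u]=0$ with normalized kinetic energy keeps $\|\phi\|_{L^4}$ bounded below, dichotomy by a subadditivity/interaction argument in which the nonlocal coupling penalizes widely separated configurations, and the weak limit is nonzero thanks to the compactness afforded by the diamagnetic inequality together with the Strauss lemma applied to $|u_n|$ -- one obtains $u_*$ with $E[u_*]=0$ and $\|u_*\|_{L^2}^2=c_{m,g}$ (using weak lower semicontinuity of the covariant kinetic energy and convergence of the quartic and nonlocal terms). Its Euler--Lagrange equation is exactly the elliptic equation obtained by substituting $u(t,r)=e^{i\omega t}Q(r)$ into \eqref{1.1} for a suitable $\omega$ (or a static solution if the multiplier degenerates), and an elliptic regularity bootstrap makes $Q:=u_*$ a smooth, decaying standing-wave profile of charge $c_{m,g}$. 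Conversely, \emph{every} nontrivial standing wave $e^{i\omega t}Q'\in L^\infty_tL^2_m$ satisfies $E[Q']=0$: a bootstrap puts $Q'\in H^1_m$, and since the problem is mass-critical, applying \eqref{1.7} to the standing wave and differentiating at $\lambda=1$ -- the charge is scaling-invariant while $E$ scales like $\lambda^2$ -- yields the Pohozaev identity $E[Q']=0$. Hence $\|Q'\|_{L^2}^2\ge c_{m,g}$, with equality attained by $Q$; this is the ``moreover'' assertion.

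\emph{Step 2: local theory and a minimal non-scattering solution.} The equivariant equation is locally well-posed in $L^2_m$ at $L^2$-critical regularity -- via Strichartz estimates for the linear Schr\"odinger flow together with multilinear estimates controlling $A_\theta$ and $A_0$, the local theory behind Theorem~\ref{t1.3} -- with small data global and scattering, and the usual stability theory. Suppose the conclusion fails, and let $m_0\in(0,c_{m,g})$ be the infimum of the charges of non-scattering solutions in $L^2_m$. A linear profile decomposition adapted to the equivariant Schr\"odinger flow -- whose noncompact symmetries here are scaling, phase and time translation -- together with a Palais--Smale argument produces a solution $u$ with $\|u\|_{L^2}^2=m_0$ that is almost periodic modulo these symmetries and is either a finite-time blow-up solution or a global soliton-like solution; since $m_0<c_{m,g}$, the coercivity of Step 1 gives a uniform bound on $\|\nabla\phi(t)\|_{L^2}$.

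\emph{Step 3: rigidity.} Because $m_0<c_{m,g}$ and $u\ne 0$, Step 1 gives $E[u]>0$, strictly. If $u$ blows up in finite time, apply the pseudoconformal transformation $PC_T$ of \eqref{1.8.1}, which preserves \eqref{1.1} and converts $u$ into a global-in-time solution, still almost periodic modulo symmetries but now with the spatial localization needed below; in the global soliton-like case $|u(t)|^2$ is already tight at unit scale. In either case the virial identity \eqref{1.7.1} gives $\tfrac{d^2}{dt^2}\tfrac14\int|x|^2|u|^2=4E[u]>0$, so -- using a spatially truncated version of $\int|x|^2|u|^2$, whose error terms are controlled by the tightness of $|u(t)|^2$ and the uniform $\|\nabla\phi\|_{L^2}$ bound -- this quantity is convex on every time interval and must tend to $+\infty$, contradicting the uniform upper bound $\lesssim R^2\|u\|_{L^2}^2$ forced by localization at scale $R$. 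Hence no such $m_0$ exists, and every solution of charge below $c_{m,g}$ is global and scatters forward and backward in time.

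\emph{Principal obstacle.} Steps 1 and 3 carry the weight. Because $g\ne 1$ the energy $E$ is not a homogeneous quadratic-plus-quartic functional -- the nonlocal terms contribute quartic and sextic pieces, and, unlike the self-dual case, the ``good'' covariant derivative $\partial_r u-\tfrac{m+A_\theta[u]}{r}u$ admits no Gagliardo--Nirenberg bound of its own (it vanishes on the self-dual vortices while $\int|u|^4$ does not) -- so Weinstein's NLS argument does not transfer, and both the compactness of minimizing sequences and the identification of minimizers with standing waves must be carried out by hand for the energy functional \eqref{1.6}, \eqref{1.8}. In Step 3 the nonlocal nonlinearity complicates the Strichartz theory and the profile decomposition, and -- in contrast with the defocusing case -- there is no monotonicity formula, only positivity of $E$, so the rigidity must be routed through the pseudoconformal symmetry and virial estimates, whose uniformity near the threshold is the most delicate point. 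I expect the construction of the equivariant linear profile decomposition and the finite-time blow-up rigidity for this nonlocal equation to be the principal technical burden.
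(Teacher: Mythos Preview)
The paper's own proof is a bare citation to \cite{liu2016global} (with \cite{dodson2015global} for comparison), so your overall architecture---variational threshold, concentration--compactness to a minimal almost-periodic counterexample, rigidity---is the same scheme that reference carries out. Step~1 is essentially right, and the Pohozaev/virial observation that every standing wave has $E=0$ is the correct way to identify $c_{m,g}$ with the minimal standing-wave charge.

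The genuine gap is in Step~3, and it is the central one for a mass-critical problem. Your sentence ``since $m_0<c_{m,g}$, the coercivity of Step~1 gives a uniform bound on $\|\nabla\phi(t)\|_{L^2}$'' is circular: the critical element produced by the Palais--Smale argument lies only in $L^2_m$, so neither $E[u]$ nor the (truncated) virial functional $\int\psi(r)\,\mathrm{Im}(\bar u\,\partial_r u)$ is defined a priori, and the coercivity inequality from Step~1 cannot be invoked until you already know $u(t)\in H^1_m$. In \cite{liu2016global} (as in \cite{dodson2015global}) this is handled by an \emph{additional regularity} step: one first reduces the almost-periodic solution to one of the three enemies (soliton, double high--to--low cascade, self-similar) of Killip--Tao--Visan type, then uses long-time Strichartz estimates adapted to the nonlocal nonlinearity to show the critical element actually lies in $H^s_m$ for $s\le 2$. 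Only after that does the energy become a conserved finite quantity, and the truncated virial identity then forces $E[u]=0$, contradicting $\|u\|_{L^2}^2<c_{m,g}$. Your two-case dichotomy (finite-time blowup versus global soliton-like) and the pseudoconformal trick do not substitute for this: the self-similar and cascade scenarios are global but not tight at a fixed scale, and applying $PC_T$ to a finite-time blowup does not by itself produce the spatial localization your truncated virial needs. The long-time Strichartz/additional-regularity machinery is the missing ingredient, and it is exactly the ``principal technical burden'' you flag---but it enters one step earlier than you place it.
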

\begin{proof}
See \cite{liu2016global}. Compare to \cite{dodson2015global} for the focusing, mass-critical NLS.
\end{proof}

\begin{theorem}[Self-dual case]\label{t1.5}
Let $g = 1$ and $m \in \mathbb{Z}_{+}$. Let $u_{0} \in L_{m}^{2}$ with $\| u_{0} \|_{L^{2}}^{2} < 8 \pi (m + 1)$. Then $(\ref{1.1})$ is globally well-posed in $L_{m}^{2}$ and scatters both forward and backward in time.
\end{theorem}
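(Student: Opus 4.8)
The constant $8\pi(m+1)$ is precisely the $L^2$ mass $\|Q\|_{L^2}^2$ of the stationary soliton $Q$, the unique positive solution --- up to the scaling \eqref{1.7} --- of the first-order Bogomolny equation $\partial_r Q = \frac{m+A_\theta[Q]}{r}\,Q$. When $g=1$ this $Q$ saturates the manifestly nonnegative energy \eqref{1.8}, and there is an associated sharp Gagliardo--Nirenberg-type inequality, with extremizer $Q$, bounding $\tfrac14\int|u|^4\,dx$ by $\tfrac{\|u\|_{L^2}^2}{\|Q\|_{L^2}^2}\big(\tfrac12\int|\partial_r u|^2\,dx + \tfrac12\int(\tfrac{m+A_\theta[u]}{r})^2|u|^2\,dx\big)$. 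Consequently $M(u)<8\pi(m+1)$ forces $E[u]>0$ for $u\not\equiv0$ and makes $E[u]$ control $\int|\partial_r u|^2\,dx$ --- the CSS analogue of Weinstein's variational bound. Thus \Cref{t1.5} is the self-dual, mass-critical counterpart of the below-ground-state scattering theorem for the focusing NLS, and the plan is to run the concentration--compactness/rigidity argument of that setting (\cite{dodson2015global}, adapted to this equation in \cite{liu2016global}).

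\emph{Local theory and critical element.} Beginning from the $L^2_m$-critical local well-posedness --- available in the equivariant class, in contrast to the general case --- one sets up $L^2_m$ Strichartz estimates for $e^{it\Delta}$ and treats the nonlocal terms $\tfrac{2m}{r^2}A_\theta u$, $A_0 u$, $\tfrac1{r^2}A_\theta^2 u$ perturbatively: since each $A_\theta[u]$ is a radial primitive of $|u|^2$, equivariant Hardy inequalities (which absorb the $r^{-2}$ weight) together with Strichartz give the necessary multilinear bounds, yielding small-data $L^2_m$ global well-posedness and scattering and a stability/perturbation lemma. Now suppose \Cref{t1.5} fails and let $M_c<8\pi(m+1)$ be the infimum of the masses of $L^2_m$ data for which the conclusion fails; $M_c>0$ by the small-data theory. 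A linear profile decomposition within $L^2_m$ --- in which, crucially, no spatial-translation or Galilean parameters occur, since equivariance fixes the vortex center (profiles with time parameter $|t_n|\to\infty$ being handled via the pseudoconformal symmetry $PC_T$) --- combined with the stability lemma produces an almost periodic critical element $u_c$ with $M(u_c)=M_c$, nonvanishing scattering norm, and almost periodicity modulo a \emph{single} scale function $\lambda(t)>0$. This reduction to one symmetry parameter is what makes the equivariant problem more tractable than the general mass-critical one.

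\emph{Rigidity.} Following Dodson, reduce to two enemies: a forward finite-time blow-up solution with $T^*<\infty$, $\lambda(t)\to\infty$, and $\int_0^{T^*}\lambda(t)^{-2}\,dt<\infty$; and a global quasi-soliton with $\lambda(t)\sim1$. (The pseudoconformal symmetry $PC_T$ can be used to pass between these.) A long-time Strichartz estimate upgrades $u_c$ to enough decay and regularity that the energy $E[u_c]$ and the virial identity \eqref{1.7.1} are meaningful. In the blow-up case, a truncated virial computation controlling $\tfrac{d}{dt}\operatorname{Im}\!\int\chi(x/R(t))\,\bar u\,r\partial_r u$, together with $E[u_c]\ge0$ and the sharp inequality above, forces the concentrating bubble to carry mass $\ge8\pi(m+1)$; in the global case a frequency-localized interaction-Morawetz/virial estimate forces $u_c\equiv0$. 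Either outcome contradicts $0<M(u_c)=M_c<8\pi(m+1)$, so no critical element exists and \Cref{t1.5} follows.

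\emph{Main obstacle.} The technical heart is this last step: establishing the long-time Strichartz estimate and closing the frequency-localized virial/Morawetz argument in the presence of the nonlocal, non-translation-invariant nonlinearity. The long-range potential $A_0$ --- an integral of $(m+A_\theta)|u|^2$ out to $r=\infty$ --- and the terms $r^{-2}A_\theta u$, $r^{-2}A_\theta^2 u$, which carry the scaling-critical $r^{-2}$ weight at the origin tamed only by the equivariant cancellation, must be controlled under Littlewood--Paley truncation, and the required coercivity must be verified in a neighbourhood of the soliton family. The absence of translations makes the Morawetz step cleaner than for NLS, but the gauge potentials and the singularity at $r=0$ are where the argument is most delicate.
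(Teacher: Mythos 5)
The paper offers no argument for this theorem beyond the citation ``See \cite{liu2016global},'' and your outline is a faithful reconstruction of exactly that reference's strategy: you correctly identify $8\pi(m+1)$ as the mass of the self-dual (Bogomolny) soliton, invoke the resulting coercivity of the nonnegative energy $(\ref{1.8})$ below that mass, and run the equivariant concentration--compactness/rigidity scheme (profile decomposition without translation or Galilean parameters, critical element, reduction to enemies, long-time Strichartz plus virial/Morawetz) that \cite{liu2016global} carries out. Since the hard estimates you defer---the long-time Strichartz bound and the frequency-localized virial argument in the presence of the nonlocal potentials $A_\theta$, $A_0$---are precisely what that reference supplies, your proposal matches the paper's (cited) proof in approach and is correct as a roadmap.
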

\begin{proof}
See \cite{liu2016global}.
\end{proof}

In this paper we prove a rigidity result that is analogous to the rigidity result for the mass-critical nonlinear Schr{\"o}dinger equation, see \cite{dodson2021determination} and \cite{dodson2021determination2}.
\begin{theorem}[Rigidity of blowup in finite time for $g > 1$]\label{t1.1}
For $m \in \mathbb{N}$ and $g > 1$, if $\| u_{0} \|_{L^{2}}^{2} = c_{m, g}$ then
\begin{equation}\label{1.9}
u = \psi^{(m, \alpha)}(t, x),
\end{equation}
 or $u = e^{i \gamma} PC_{T}[\lambda \psi^{(m, g)}(\lambda^{2} \cdot, \lambda \cdot)](t, x)$ for some $T > 0$.
\end{theorem}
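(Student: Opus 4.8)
\emph{Strategy.} The plan is to adapt Merle's characterization of minimal-mass finite-time blowup for the mass-critical nonlinear Schr\"odinger equation, in the refined ($L^{2}$-based) form of \cite{dodson2021determination,dodson2021determination2}, to the nonlocal equation \eqref{1.1}. The variational input, implicit in \Cref{t1.4}, is that $c_{m,g}$ is the sharp threshold for nonnegativity of the energy: $E[u]\ge 0$ whenever $\|u\|_{L^{2}}^{2}\le c_{m,g}$, and at $\|u\|_{L^{2}}^{2}=c_{m,g}$ one has $E[u]=0$ precisely when $u$ is, up to the scaling \eqref{1.7} and a phase, the equivariant standing wave $\psi^{(m,g)}$; in particular $E[\psi^{(m,g)}]=0$. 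The first step is the minimal-mass reduction: using an $L^{2}_{m}$ profile decomposition adapted to the symmetry group of \eqref{1.1} --- which by the equivariance \eqref{1.3} consists only of scalings and phases --- together with the $L^{2}_{m}$ perturbation theory, the minimality of $c_{m,g}$, and \Cref{t1.4} (to absorb any residual sub-threshold, non-scattering bubble), one shows that $u$ is almost periodic modulo symmetries: there exist $\lambda(t)>0$ and $\gamma(t)\in\mathbb{R}$ so that $\{e^{i\gamma(t)}\lambda(t)\,u(t,\lambda(t)\,\cdot)\}$ is precompact in $L^{2}_{m}$, and every subsequential limit equals $e^{i\gamma_{0}}\psi^{(m,g)}$; a regularity-persistence argument upgrades this to precompactness in the energy space. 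A frequency-localized interaction Morawetz / long-time Strichartz estimate in the spirit of \cite{dodson2021determination} rules out the scenario in which $\lambda(t)\to\infty$ along sequences of times in both time directions (in particular forbidding scattering in both directions at the threshold mass). Hence, after a time translation and, if needed, the time-reversal symmetry, we are in one of two cases: (i) $u$ is global forward in time with $\lambda(t)$ bounded above and below; or (ii) $u$ blows up at a finite time $T>0$, and then $\lambda(t)\to 0$ as $t\to T^{-}$.

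\emph{Case (i): the standing wave.} From energy-space precompactness, the scaling identity $E[e^{i\gamma(t)}\lambda(t)\,u(t,\lambda(t)\,\cdot)]=\lambda(t)^{2}E[u]$, and the boundedness of $\lambda(t)$ away from $0$ and $\infty$, one forces $E[u]=0$. Since $\|u\|_{L^{2}}^{2}=c_{m,g}$, the equality case of the variational characterization then gives $u(t,x)=e^{i\theta(t)}\mu(t)\,\psi^{(m,g)}(\mu(t)x)$ for every $t$, with $\theta,\mu$ smooth. Substituting this Ansatz into \eqref{1.1} and using the profile equation satisfied by $\psi^{(m,g)}$ together with the identity \eqref{1.8} for the nonlocal terms, the modulation equations collapse: $\mu(t)\equiv\mu_{0}$ and $\theta$ is affine, i.e. $u=\psi^{(m,\alpha)}$.

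\emph{Case (ii): the pseudoconformal soliton.} The crux is the sharp blowup rate $\lambda(t)\sim T-t$. The bound $\lambda(t)\lesssim T-t$ is the standard rate estimate from the local theory; the matching lower bound $\lambda(t)\gtrsim T-t$ is the analogue of Merle's lemma, which I would derive from conservation of the pseudoconformal (conformal) energy $\mathcal{E}_{c}[u]$ --- produced from the virial identity \eqref{1.7.1} and the mass-criticality exactly as for NLS, and satisfying $\mathcal{E}_{c}[u]=E[PC_{T}[u]]$ --- together with the energy-space compactness: the a priori divergent contribution to $\mathcal{E}_{c}[u]$ as $t\to T^{-}$ is precisely $(T-t)^{-2}$ times $E[\psi^{(m,g)}]=0$, so $\mathcal{E}_{c}[u]$ remains finite, which prevents over-concentration and yields $\lambda(t)\sim T-t$. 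With the rate established, the pseudoconformal transform $v:=PC_{T}[u]$ is a forward-global solution with $\|v_{0}\|_{L^{2}}^{2}=c_{m,g}$ whose modulation parameter stays bounded above and below (the rate $\lambda(t)\sim T-t$ is exactly what makes the pseudoconformal inversion well-behaved as $t\to T^{-}$), so $v$ falls under Case (i); hence $v=e^{i\gamma}\lambda\,\psi^{(m,g)}(\lambda^{2}\,\cdot,\lambda\,\cdot)$ up to symmetries, and undoing $PC_{T}$ gives $u=e^{i\gamma}PC_{T}[\lambda\,\psi^{(m,g)}(\lambda^{2}\,\cdot,\lambda\,\cdot)]$, the second alternative.

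\emph{Main obstacle.} I expect the decisive difficulties to be twofold. First, establishing (if it is not already available from the works underlying \Cref{t1.4}) the sharp variational inequality above, \emph{with a complete characterization of the equality case} for the nonlocal functional in \eqref{1.8}: uniqueness of optimizers in the presence of the cubic nonlocal Chern--Simons nonlinearity is considerably subtler than for NLS. Second, the sharp blowup rate $\lambda(t)\sim T-t$ of Case (ii): one must run Merle's conformal-energy argument while carrying the nonlocal terms $A_{\theta}[u]$ and $A_{0}[u]$ through every virial, Morawetz, and pseudoconformal computation, and verify their monotonicity and sign properties by hand.
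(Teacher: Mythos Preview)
Your overall architecture---almost periodicity via profile decomposition, then a global/finite-time dichotomy resolved by the pseudoconformal symmetry---matches the paper. But the execution you sketch is essentially the $H^{1}$ (in fact $H^{1,1}$) Merle argument, which is precisely what \Cref{t1.2} already provides; the point of \Cref{t1.1} is to remove the regularity hypothesis and work with $u_{0}\in L^{2}_{m}$ only. Two concrete gaps:

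\emph{Case (ii).} The conformal energy $\mathcal E_{c}[u]$ involves $\int |x|^{2}|u|^{2}$, which need not be finite for $L^{2}$ data, so Merle's lemma and the sharp rate $\lambda(t)\sim T-t$ are not available. The paper does \emph{not} prove this rate. Instead it develops a frequency-localized long-time Strichartz estimate (Proposition~\ref{p5.5}) and an almost-conservation law for $E[P_{\le N}u]$ (Proposition~\ref{p5.6}), feeds these into a truncated virial/Morawetz identity to get the quantitative bound $\int \|\epsilon(s)\|_{L^{2}}^{2}\,ds\lesssim (\epsilon_{2},\Lambda Q)|_{a}^{b}+O(T^{-8})$ (Proposition~\ref{p5.4}), and from this extracts $\|\epsilon(s)\|_{L^{2}}\in L^{p}_{s}$ for every $p>1$ (Proposition~\ref{p6.1}) together with near-monotonicity of $\lambda$ (Proposition~\ref{p7.1}). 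These give only the much weaker $\lambda(t)=o(|t|^{1/2})$ near the blowup time, but that already suffices for the error $\|Q(\cdot)(e^{i|x|^{2}/4t}-1)\|_{L^{2}}\to 0$ in the pseudoconformal picture, reducing Case (ii) to Case (i). You mention long-time Strichartz only in passing, to exclude double self-similar scenarios; in the paper it is the main engine.

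\emph{Case (i).} Energy-space precompactness together with $\lambda$ bounded above and below does \emph{not} by itself force $E[u]=0$; the scaling identity $E[\lambda u(\lambda\,\cdot)]=\lambda^{2}E[u]$ only gives that $E[u]$ is finite. The paper instead uses the machinery above: on arbitrarily long intervals $I(k_{n})$ with controlled oscillation of $\lambda$, the mean-value theorem applied to $\int\|\epsilon\|_{L^{2}}^{2}$ finds times with $\|\epsilon\|_{L^{2}}^{2}\lesssim 2^{-4k_{n}}k_{n}^{4}$, and then the frequency-truncated energy bound (Proposition~\ref{p5.6}) gives $E[P_{\le N_{n}}u(0)]\to 0$, hence $E[u]=0$.

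Finally, the obstacles you flag are not the real ones: the variational characterization of zero-energy threshold-mass functions is already available (the paper cites Proposition~3.7 of \cite{li2022threshold}), and the sharp rate is neither needed nor provable by your route at $L^{2}$ regularity. The genuine difficulty is building the nonlocal analogues of the long-time Strichartz and truncated-energy estimates, where the terms $A_{\theta}[u]$ and $A_{0}[u]$ must be carried through Littlewood--Paley commutator arguments.
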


For a generic $g > 1$, $(\ref{1.1})$ has a standing wave solution of the form
\begin{equation}\label{1.10}
\Delta u - \frac{2m}{r^{2}} A_{\theta}[u] u - A_{0}[u] u - \frac{1}{r^{2}} A_{\theta}[u]^{2} u + g |u|^{2} u -\alpha u = 0,
\end{equation}
for some $\alpha > 0$. This gives a standing wave equation to $(\ref{1.1})$ that is in the form $e^{i \alpha t} Q$, where $Q$ is the positive solution to $(\ref{1.10})$. The existence of such a solution was proved by \cite{2012standing}. The unique positive, standing wave solution to $(\ref{1.1})$ with a given $m$ and $\alpha > 0$ may be abbreviated $\psi^{(m, \alpha)}(t, x)$.\medskip

Theorem $\ref{t1.1}$ improves an earlier result of \cite{li2022threshold}.
\begin{theorem}\label{t1.2}
For $m \in \mathbb{N}$ and $g > 1$, if $\phi \in H_{m}^{1}(\mathbb{R}^{2})$, $\| u_{0} \|_{L^{2}}^{2} = c_{m, g}$, and the solution blows up forward in finite time at $T > 0$, there exists $\gamma \in [0, 2 \pi)$, $\lambda \in \mathbb{R}_{+}$, and an $m$-equivariant standing wave solution
\begin{equation}\label{1.11}
\psi^{(m, g)}(t, x) = e^{i \alpha t} \phi^{(m, g)}(x),
\end{equation}
such that
\begin{equation}\label{1.12}
u(t,x) = e^{i \gamma} PC_{T}[\lambda \psi^{(m, g)}(\lambda^{2} \cdot, \lambda \cdot)](t,x).
\end{equation}
\end{theorem}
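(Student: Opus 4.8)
The plan is to follow the concentration--compactness--and--rigidity strategy that Dodson developed for the mass-critical nonlinear Schr\"odinger equation in \cite{dodson2021determination, dodson2021determination2}, adapted to the gauge-field structure of \eqref{1.1}. First I would record the variational facts attached to $c_{m,g}$: by the sharp Gagliardo--Nirenberg-type inequality built from the energy functional \eqref{1.8} (equivalently, the Euler--Lagrange characterization of the minimal-charge standing wave), $c_{m,g}$ equals the infimum of $\|v\|_{L^2}^2$ over nonzero $m$-equivariant standing waves, this infimum is attained only by $Q$ up to the symmetries \eqref{1.7} and a phase, and a Pohozaev identity forces $E[\psi^{(m,\alpha)}]=0$. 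Combined with Theorem~\ref{t1.4}, this means $c_{m,g}$ is precisely the minimal $L^2$-mass at which a solution can fail to scatter. A threshold solution that scatters in both time directions can be excluded by a short concentration/double-Duhamel argument against the minimality of $c_{m,g}$; and if $u$ scatters forward but not backward, the backward analysis below yields $u=\psi^{(m,\alpha)}$ or the pseudoconformal blowup, neither of which scatters forward, a contradiction. Hence it suffices to treat a threshold solution $u$ that does not scatter forward in time.

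Second, from such a $u$ I would run the minimal-mass-blowup machinery on the maximal forward interval $[0,T_+)$: a profile decomposition for the equivariant linear propagator, a perturbation/stability lemma for \eqref{1.1} (handling the nonlocal potentials $A_\theta[u]$ and $A_0[u]$), and the scattering of sub-threshold data from Theorem~\ref{t1.4}, to produce an almost-periodic solution modulo the symmetry group, with frequency scale $N(t)$ and --- because of equivariance --- no translation parameter. Along any sequence $t_n$ the rescaled, phase-adjusted solution converges strongly to a nonzero solution of mass $\le c_{m,g}$; minimality forces the limit to be a standing wave, hence exactly $\psi^{(m,\alpha)}$ up to symmetry. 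The argument then splits according to whether $T_+<\infty$.

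In the finite-time-blowup case $T_+<\infty$ --- the setting of Theorem~\ref{t1.2}, from which I wish to remove the $H^1$ hypothesis --- I would apply the pseudoconformal transformation $PC_{T_+}$ to obtain a global-forward solution $v$ of the same mass; the extra spatial decay conferred by $PC_{T_+}$, together with the almost-periodicity of $u$, supplies the missing regularity ($\int|x|^2|u_0|^2<\infty$ and finite energy for $v$), reducing matters to the global case below, after which $u$ is recovered as $e^{i\gamma}PC_{T_+}[\lambda\psi^{(m,g)}(\lambda^2\cdot,\lambda\cdot)]$. In the global case $T_+=\infty$, I would first establish additional regularity, namely that $u(t)$ lies in $H^1_m$ uniformly and that $\int|x|^2|u|^2<\infty$; then the virial identity \eqref{1.7.1}, conservation of energy, the scaling law $E\mapsto\lambda^2 E$, and the compactness of the rescaled orbit force $N(t)$ to stay comparable to a constant and $E[u]=0$ (if $E[u]<0$ the solution blows up in finite time, contradicting $T_+=\infty$; if $E[u]>0$ then $\int|x|^2|u|^2$ grows, contradicting compactness at bounded scales). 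Finally, $E[u]=0$ with $\|u\|_{L^2}^2=c_{m,g}$ is the equality case of the sharp inequality, so $u(t)$ is a modulated copy of $Q$ for every $t$, and unique continuation for \eqref{1.10} together with the flow identifies the modulation, giving $u=\psi^{(m,\alpha)}$.

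The main obstacle I anticipate is the low-regularity ``additional regularity'' step: upgrading an $L^2_m$ threshold solution to $H^1$ with finite variance while controlling the nonlocal gauge potentials $A_\theta[u]$ and $A_0[u]$ in the interaction-Morawetz/virial computations, since --- unlike the self-dual case $g=1$ of Theorem~\ref{t1.5} --- the energy \eqref{1.8} carries an indefinite quartic term and the slick self-dual factorization is unavailable. A secondary technical point is establishing sharp uniqueness of the threshold minimizer for generic $g>1$, where the standing-wave equation \eqref{1.10} is not known to enjoy the rigidity available at $g=1$; here one would combine the strong limit from the profile decomposition with the positivity and radial monotonicity of $Q$ from \cite{2012standing} to pin the minimizer down.
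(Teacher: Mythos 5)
Your outline is essentially the classical Merle-type route for minimal-mass finite-time blowup: concentration compactness to get sequential convergence to $Q$, then the pseudoconformal transformation plus a virial/energy dichotomy to force $E=0$ and invoke the equality case of the variational characterization. That is a genuinely different route from this paper's. Here Theorem \ref{t1.2} is quoted from \cite{li2022threshold}; the paper's own machinery targets the stronger Theorem \ref{t1.1}, which removes the $H^1$ hypothesis, and it never upgrades $u$ itself to $H^1$ or finite variance. Instead it runs a modulation analysis $u=Q+\epsilon$ with coercivity of the linearized energy coming from $\alpha>0$ in \eqref{1.10}, frequency-localized long-time Strichartz and almost-conservation-of-energy estimates (Propositions \ref{p5.4}--\ref{p5.6}), the integrated virial bound \eqref{5.46}, $L^p_s$ integrability of $\|\epsilon\|_{L^2}$ (Proposition \ref{p6.1}), and monotonicity of $\lambda$ (Proposition \ref{p7.1}), extracting $E(u)=0$ only at the end from truncated energies as in \eqref{8.11}. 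Your approach, when it closes, is shorter and avoids the $U^2_\Delta/DU^2_\Delta$ apparatus; the paper's approach buys the $L^2$-level statement.

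There are, however, two concrete gaps. First, the finite-variance step is circular as written: you say $PC_{T_+}$ "supplies" $\int|x|^2|u_0|^2<\infty$ and finite energy for $v$, but $E[v]$ is finite precisely when $|x|u(t)\in L^2$, and finite variance of a minimal-mass $H^1$ blowup solution is itself a nontrivial theorem (in Merle's scheme it is proved by a separate localized-virial bootstrap before the transformation is applied); you cannot obtain it as an output of applying $PC_{T_+}$. Second, the passage from sequential convergence to the identity $u(t)=$ modulated $Q$ for \emph{every} $t$ rests entirely on establishing $E[v]=0$ for the transformed global solution, and your dichotomy for that has holes: Glassey's argument for $E<0$ again presupposes finite variance, while for $E>0$ the quadratic growth of $\int|x|^2|v|^2$ given by \eqref{1.7.1} does not immediately contradict "compactness at bounded scales," since almost periodicity gives tightness of $|v|^2\,dx$ but no uniform bound on $\int_{|x|>R}|x|^2|v|^2$; moreover "$N(t)$ comparable to a constant" is not forced by compactness alone but requires the reduction to the three enemies and the elimination of two of them, which the paper performs in Section 2 via the additional-regularity argument of \cite{liu2016global} together with Proposition 3.7 of \cite{li2022threshold} (the zero-energy characterization you are implicitly invoking as "the equality case of the sharp inequality"). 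With the $H^1$ hypothesis these points are repairable along the lines of \cite{li2022threshold}, but as written the proposal asserts rather than proves the two steps on which the whole argument turns.
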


The proof of Theorem $\ref{t1.1}$ is very similar to the argument in \cite{dodson2021determination2}. The main new difficulty is that equation $(\ref{1.1})$ is no longer a local equation.\medskip

The method proving Theorem $\ref{t1.1}$ does not extend to the self-dual, or $g = 1$ case. There are several reasons for this. The first is that the standing wave equation with $g = 1$ has $\alpha = 0$ in $(\ref{1.10})$. In the non self--dual case, the fact that $\alpha > 0$ is used extensively in the proof. Indeed, for the mass--critical problem, $(\ref{1.4})$, the soliton satisfies the elliptic equation
\begin{equation}
\Delta Q + |Q|^{2} Q = Q.
\end{equation}
Taking $u$ close to the soliton, $u = Q + \epsilon$,
\begin{equation}\label{1.12.1}
(\nabla Q, \nabla \epsilon) - (|Q|^{2} Q, \epsilon) = -(\Delta Q + |Q|^{2} Q, \epsilon) = -(Q, \epsilon) = \frac{1}{2} \| \epsilon \|_{L^{2}}^{2}.
\end{equation}
The last equality arises from the fact that $\| Q + \epsilon \|_{L^{2}} = \| Q \|_{L^{2}}$. Since $(\ref{1.12.1})$ represents the $\epsilon^{1}$ term in the expansion of $E[Q + \epsilon]$, we obtain
\begin{equation}\label{1.12.2}
E[Q + \epsilon] = \frac{1}{2}(\mathcal L \epsilon, \epsilon) + O(\epsilon^{3}),
\end{equation}
where $(\mathcal L \epsilon, \epsilon) \gtrsim \| \epsilon \|_{H^{1}}^{2}$ when $\epsilon$ is orthogonal to $\nabla Q$ and the negative eigenvector of $\mathcal L$. Since $Q$ is smooth and rapidly decreasing, $(\ref{1.12.1})$ is well--behaved under truncations in space and frequency. It is possible to obtain a similar estimate for $(\ref{1.10})$ when $\alpha > 0$. However, when $\alpha = 0$ we lose the $\| \epsilon \|_{L^{2}}^{2}$ term, which adds additional technical difficulties for a mass--critical problem.\medskip

Secondly, the standing wave solution to $(\ref{1.1})$ is no longer rapidly decreasing. This is also a by-product of the fact that $\alpha = 0$. Instead, the solution has the explicit form
\begin{equation}\label{1.13}
Q(r) = \sqrt{8} (m + 1) \frac{r^{m}}{1 + r^{2m + 2}}, \qquad m \geq 0.
\end{equation}
This fact is also used heavily. It seems likely to the author that Theorem $\ref{t1.1}$ should be true in the self-dual case, since \cite{li2022threshold} proved that Theorem $\ref{t1.1}$ does hold under the additional assumption that $u_{0} \in H_{m}^{1}$.\medskip

Additionally, it should be noted that when $m < 0$, \cite{kim2022soliton} proved that global well-posedness and scattering hold for initial data in $H_{m}^{1,1}$, and when $m \geq 0$, \cite{kim2022soliton} proved that a blowup solution should resolve into a single soliton plus a radiative term.
\begin{remark}
In fact, using the arguments proving Theorem $\ref{t1.1}$ in \cite{liu2016global}, when $m < 0$, a minimal mass blowup solution to $(\ref{1.1})$ can be reduced to one of three enemies:

\begin{itemize}
\item $N(t) = 1$,

\item $N(t) \leq 1$, $t \in \mathbb{R}$, $\liminf_{t \rightarrow \pm \infty} N(t) = 0$,

\item $N(t) = t^{-1/2}$, $t \in (0, \infty)$.
\end{itemize}

Also following the arguments in \cite{liu2016global}, it is possible to show that for an almost periodic solution to $(\ref{1.1})$, $u(t) \in H^{s}$ for $s \leq 2$, which furthermore implies that if $u$ is one of the three enemies, $E(u(t)) = 0$. However, using the estimate in \cite{kim2022soliton}, which shows that $E(u(t)) \sim_{M[u]} \| u \|_{\dot{H}_{m}^{1}}^{2}$ when $m < 0$, gives a contradiction.
\end{remark}

\section{Sequential convergence}
We begin with a sequential convergence result, comparable to the sequential convergence result for the mass-critical NLS in \cite{fan20182}, \cite{dodson20212}, and \cite{dodson2022l2}. The argument here follows the argument in \cite{dodson2023sequential} for the self-dual Chern--Simons--Schr{\"o}dinger equation.
\begin{theorem}[Sequential convergence]\label{t2.1}
Let $u$ be a solution to $(\ref{1.1})$ that blows up forward in time and satisfies $\| u \|_{L^{2}}^{2} = c_{m, g}$. That is,
\begin{equation}\label{2.1}
\lim_{T \nearrow \sup(I)} \| u \|_{L_{t,x}^{4}([0, T] \times \mathbb{R}^{2})} = +\infty.
\end{equation}
Then there exists $t_{n} \nearrow \sup(I)$ and sequences $\lambda(t_{n}) > 0$, $\gamma(t_{n}) \in [0, 2 \pi)$, such that
\begin{equation}\label{2.2}
e^{i \gamma(t_{n})} \lambda(t_{n}) u(t_{n}, \lambda(t_{n}) \cdot) \rightarrow \psi^{(m, g)}(\cdot), \qquad \text{in} \qquad L^{2},
\end{equation}
where $\psi^{(m, g)}$ is the real, positive standing wave solution to $(\ref{1.10})$.
\end{theorem}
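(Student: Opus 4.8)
The plan is to run the $L^2$-critical concentration-compactness and rigidity scheme used for the self-dual equation in \cite{dodson2023sequential} and for the mass-critical NLS in \cite{fan20182,dodson20212,dodson2022l2}, treating the nonlocal Chern--Simons terms as perturbations. \textbf{Step 1 (reduction to an almost periodic solution).} By Theorem~\ref{t1.4}, $c_{m,g}$ is precisely the mass threshold below which every $L^2_m$ solution scatters; since $u$ has mass $c_{m,g}$ and fails to scatter forward, the standard minimal-counterexample analysis — the profile decomposition for $(\ref{1.1})$ of \cite{liu2016global}, in which the equivariant symmetry removes the translation parameter so that only $L^2$-scaling and phase survive — shows $u$ is almost periodic modulo these symmetries: there are $\lambda(t)>0$, $\gamma(t)\in[0,2\pi)$ with
\[
K\;:=\;\overline{\bigl\{\,e^{i\gamma(t)}\lambda(t)\,u\bigl(t,\lambda(t)\,\cdot\,\bigr)\;:\;t\in[0,\sup I)\,\bigr\}}
\]
compact in $L^2_m$, and with $N(t):=\lambda(t)^{-1}$ falling into one of the three scenarios of the Remark. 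By the additional-regularity (in/out double Duhamel) estimates for almost periodic solutions from \cite{liu2016global} (see the Remark), $u(t)$ is uniformly bounded in $H^s_m$ for some $s>0$, so $K$ is in fact compact in $H^1_m$ and the energy $E$ is finite and continuous on $K$.

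\textbf{Step 2 (vanishing of the energy).} Next I would show $E[u]=0$. Fixing a smooth cutoff and localizing the virial identity $(\ref{1.7.1})$ at a spatial scale proportional to the concentration scale $N(t)^{-1}$, the compactness of $K$ in $H^1_m$ makes the truncation errors — including those of the nonlocal terms $A_\theta[u]^2/r^2$ and $A_0[u]$ — uniformly small, so that
\[
\frac{d^2}{dt^2}\Bigl[\,\text{truncated }\tfrac14\!\int|x|^2|u|^2\,\Bigr]\;=\;4E[u]+o_R(1).
\]
The truncated variance is bounded in $t$ (again by compactness of $K$ the mass stays at scale $\sim N(t)^{-1}$), and a function whose second derivative is essentially a fixed constant can be bounded only if that constant vanishes; hence $E[u]=0$. (In the finite-time blowup case one first applies $PC_T$ to pass to a forward-global solution; the cascade and self-similar scenarios are handled on long time windows in the rescaled frame.)

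\textbf{Step 3 (extraction and identification).} Now take any $t_n\nearrow\sup I$. By compactness of $K$ in $H^1_m$, a subsequence of $u_n:=e^{i\gamma(t_n)}\lambda(t_n)u(t_n,\lambda(t_n)\,\cdot\,)$ converges in $H^1_m$, hence in $L^2$, to some $Q_\infty$ with $\|Q_\infty\|_{L^2}^2=c_{m,g}$ (so $Q_\infty\not\equiv0$) and, by continuity of $E$ on $K$ together with its scaling/phase invariance and conservation, $E[Q_\infty]=E[u]=0$. Finally, the variational characterization behind Theorem~\ref{t1.4} — that $c_{m,g}$ is the sharp constant in the Gagliardo--Nirenberg-type inequality associated with the energy $(\ref{1.8})$, whose optimizers are exactly the $L^2$-scaling and phase images of $\psi^{(m,g)}$ (cf.\ \cite{2012standing,liu2016global}) — forces any nonzero $w\in H^1_m$ with $\|w\|_{L^2}^2=c_{m,g}$ and $E[w]=0$ to be such an image; applied to $w=Q_\infty$ this yields $Q_\infty=e^{i\gamma_0}\mu_0\,\psi^{(m,g)}(\mu_0\,\cdot\,)$ for fixed $\gamma_0,\mu_0$. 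Replacing $\gamma(t_n),\lambda(t_n)$ by $\gamma(t_n)-\gamma_0$ and $\lambda(t_n)/\mu_0$ turns this into $(\ref{2.2})$.

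\textbf{Main obstacle.} The central new difficulty, flagged in the introduction, is that $(\ref{1.1})$ is nonlocal: $A_\theta[u]$ and $A_0[u]$ are $r$-integrals of $|u|^2$, and $A_0[u]$ — an integral over $[r,\infty)$ — is sensitive to the tails of $u$. This requires care at two places: (i) checking that the profile decomposition and the additional-regularity arguments of \cite{liu2016global} genuinely decouple the nonlocal nonlinearity across profiles and across the double-Duhamel iterations; and (ii) controlling the Chern--Simons terms in the truncated virial identity of Step~2, i.e.\ showing that spatial truncation of $u$ perturbs $A_\theta[u]^2/r^2$ and $A_0[u]$ only by errors uniformly small on $K$. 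I expect Step~2 — the truncated virial and the resulting $E[u]=0$ — together with extracting the equality case of the sharp inequality for the focusing nonlocal functional, to be the crux; Step~1 is essentially imported from \cite{liu2016global} and \cite{dodson2023sequential}.
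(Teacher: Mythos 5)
Your overall skeleton (profile decomposition $\Rightarrow$ almost periodicity modulo scaling and phase $\Rightarrow$ zero energy $\Rightarrow$ identification of the soliton via the variational characterization of zero--energy, critical--mass equivariant functions) is the same as the paper's, and your Steps 1 and 3 are essentially what the paper does. The problem is Step 2, which is the crux and which, as written, is false. The theorem covers finite--time blowup solutions, and the pseudoconformal image $PC_{T}[\psi^{(m,g)}]$ of the soliton is such a solution of mass $c_{m,g}$; its energy is a strictly positive constant (for the mass--critical NLS it equals $\tfrac{1}{8}\| x Q \|_{L^{2}}^{2}$, and the same computation applies here). So $E[u]=0$ cannot hold for every solution in the class under consideration, and your Step 3 deduction $E[Q_{\infty}]=E[u]=0$ collapses. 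Relatedly, $u(t)$ is not uniformly bounded in $H^{s}_{m}$ for a finite--time blowup solution --- only the renormalized orbit $e^{i\gamma(t)}\lambda(t)u(t,\lambda(t)\cdot)$ can be --- and since $E[\lambda f(\lambda\cdot)]=\lambda^{2}E[f]$, precompactness of that renormalized orbit in $H^{1}_{m}$ does not by itself transfer energy information back to $u$. Your parenthetical fix (apply $PC_{T}$ first) does not close the gap either: even for a forward--global almost periodic solution, the truncated--virial argument only yields $E=0$ when the frequency scale is controlled (e.g.\ $N(t)\lesssim 1$, or one of the three standard scenarios); for a general minimal--mass blowup solution $\lambda(t)$ can a priori oscillate, the truncation errors are of size $\lambda(t)^{-2}$ times a small constant, and comparing $4ET$ against $\int\lambda(t)^{-2}\,dt$ is inconclusive without the three--enemies reduction.

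The paper sidesteps all of this by proving only what the theorem asserts: the existence of \emph{some} sequence of times along which $(\ref{2.2})$ holds. It applies the profile decomposition to $u(t_{n})$, extracts a single profile $\phi^{1}$ of full mass, lets $\phi$ be the solution with data $\phi^{1}$ (which blows up in both time directions and is almost periodic), and then performs a \emph{second} reduction of $\phi$ to one of the three enemies $v$, for which the additional--regularity argument of \cite{liu2016global} gives $E(v)=0$ and hence, by Proposition~3.7 of \cite{li2022threshold}, $v=\psi^{(m,g)}$. The convergence $(\ref{2.2})$ is then recovered for the original $u$ by a diagonal argument along the modified times $t_{n}+\lambda(t_{n})^{-2}\tau_{n'}$, as in $(\ref{2.11})$. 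To repair your proof you would either have to carry out this double extraction, or else justify the zero--energy claim for the renormalized limit directly --- which is exactly the step your current argument assumes rather than proves.
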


The proof uses the fact that if $\| u \|_{L^{2}}^{2} = c_{m, g}$, then $u$ is a minimal mass blowup solution. Thus, it is possible to make use of much of the analysis in \cite{liu2016global}.

\begin{proposition}[Linear profile decomposition]\label{p2.2}
Let $\psi_{n}$, $n = 1, 2, ...$ be a bounded sequence in $L_{m}^{2}$. Then, after passing to a subsequence if necessary, there exists a sequence of functions $\phi^{j} \in L_{m}^{2}$, group elements $g_{n}^{j}$, and times $t_{n}^{j} \in \mathbb{R}$ such that we have the decomposition
\begin{equation}\label{2.3}
\psi_{n} = \sum_{j = 1}^{J} g_{n}^{j} e^{i t_{n}^{j} \Delta} \phi^{j} + w_{n}^{J}, \qquad \forall J = 1, 2, ...
\end{equation}
where $g_{n}^{j}$ belongs to the group of transformations of $L^{2}(\mathbb{R}^{2})$ generated by the scaling symmetry $(\ref{1.7})$ and multiplying by $e^{i \gamma}$ for some $\gamma \in \mathbb{R}$,
\begin{equation}\label{2.3.1}
g_{n}^{j} u(x) = e^{i \gamma(t_{n}^{j})} \lambda(t_{n}^{j}) u(\lambda(t_{n}^{j}) x).
\end{equation}

Moreover, $w_{n}^{J} \in L_{m}^{2}$ is such that its linear evolution has asymptotically vanishing scattering size
\begin{equation}\label{2.3.2}
\lim_{J \rightarrow \infty} \limsup_{n \rightarrow \infty} \| e^{it \Delta} w_{n}^{J} \|_{L_{t,x}^{4}} = 0.
\end{equation}
Moreover, for any $j \neq j'$,
\begin{equation}\label{2.3.3}
\frac{\lambda_{n}^{j}}{\lambda_{n}^{j'}} + \frac{\lambda_{n}^{j'}}{\lambda_{n}^{j}} + \frac{|t_{n}^{j} (\lambda_{n}^{j})^{2} - t_{n}^{j'} (\lambda_{n}^{j'})^{2}|}{\lambda_{n}^{j} \lambda_{n}^{j'}} \rightarrow \infty.
\end{equation}
Furthermore, for any $J \geq 1$, we have the mass decoupling property
\begin{equation}\label{2.4}
\lim_{n \rightarrow \infty} [M(u(t_{n})) - \sum_{j = 1}^{J} M(\phi^{j}) - M(w_{n}^{J})] = 0,
\end{equation}
\end{proposition}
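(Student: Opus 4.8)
The plan is to run the standard Bourgain--Keraani inductive construction of an $L^{2}$ profile decomposition, with the symmetry group reduced to scalings and phases, exactly as was done for the self-dual equation in \cite{dodson2023sequential}; since the statement concerns only the free evolution $e^{it\Delta}$, it is completely insensitive to the value of $g$. The single analytic input is an \emph{inverse Strichartz inequality} for equivariant data: there is a non-decreasing $\beta : (0,\infty) \to (0,\infty)$ such that if $f \in L^{2}_{m}$ satisfies $\| f \|_{L^{2}} \le A$ and $\| e^{it\Delta} f \|_{L^{4}_{t,x}} \ge \epsilon$, then there exist $\lambda > 0$, $\gamma \in [0,2\pi)$, $\tau \in \mathbb{R}$ and $\phi \in L^{2}_{m}$ with $\| \phi \|_{L^{2}} \ge \beta(\epsilon/A)$ such that, writing $g f = e^{i\gamma}\lambda f(\lambda \cdot)$, one has $e^{-i\tau\Delta} g^{-1} f \rightharpoonup \phi$ weakly in $L^{2}$. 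This is proved through a refined $L^{4}$ Strichartz estimate adapted to equivariant functions, which I would quote directly from \cite{dodson2023sequential} rather than reprove. The point of working in $L^{2}_{m}$ is that the relevant non-compact symmetry group reduces to the scaling--phase group $(\ref{2.3.1})$ --- spatial translations and Galilean boosts, which enter the general $2$D mass-critical profile decomposition, do not preserve the equivariance constraint $(\ref{1.3})$ --- and since $e^{it\Delta}$, dilations and phases all map $L^{2}_{m}$ to itself, and $L^{2}_{m}$ is a closed (hence weakly closed) subspace of $L^{2}(\mathbb{R}^{2})$, every profile and every remainder produced below lies automatically in $L^{2}_{m}$.

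Next I would carry out the iteration. If $\limsup_{n} \| e^{it\Delta}\psi_{n} \|_{L^{4}_{t,x}} = 0$, take $J = 0$ and $w_{n}^{0} = \psi_{n}$, and there is nothing to prove. Otherwise, after passing to a subsequence, apply the inverse Strichartz inequality to produce $\phi^{1} \in L^{2}_{m}$, parameters $\lambda_{n}^{1}, \gamma_{n}^{1}, t_{n}^{1}$, and set $w_{n}^{1} := \psi_{n} - g_{n}^{1} e^{i t_{n}^{1} \Delta}\phi^{1}$. Since $(g_{n}^{1} e^{i t_{n}^{1} \Delta})^{-1}\psi_{n} \rightharpoonup \phi^{1}$ in the Hilbert space $L^{2}$, the Pythagorean relation gives $\| \psi_{n} \|_{L^{2}}^{2} = \| \phi^{1} \|_{L^{2}}^{2} + \| w_{n}^{1} \|_{L^{2}}^{2} + o(1)$ together with $(g_{n}^{1} e^{i t_{n}^{1} \Delta})^{-1} w_{n}^{1} \rightharpoonup 0$. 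Replacing $\psi_{n}$ by $w_{n}^{1}$ and repeating produces $\phi^{j}, g_{n}^{j}, t_{n}^{j}, w_{n}^{j}$ for every $j$, and summing the successive two-term decouplings yields the mass identity $(\ref{2.4})$ for each finite $J$. In particular $\sum_{j} \| \phi^{j} \|_{L^{2}}^{2} \le \limsup_{n} \| \psi_{n} \|_{L^{2}}^{2} < \infty$, so $\| \phi^{J} \|_{L^{2}} \to 0$ as $J \to \infty$. Because $\phi^{J+1}$ is extracted from $w_{n}^{J}$, we have $\| \phi^{J+1} \|_{L^{2}} \ge \beta\!\big(A^{-1}\limsup_{n} \| e^{it\Delta}w_{n}^{J} \|_{L^{4}_{t,x}}\big)$, and since $\beta$ is positive and non-decreasing this forces $\limsup_{n} \| e^{it\Delta}w_{n}^{J} \|_{L^{4}_{t,x}} \to 0$, which is $(\ref{2.3.2})$.

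It remains to establish the asymptotic orthogonality $(\ref{2.3.3})$. I would argue by contradiction: if it fails for some pair $j < j'$, then along a subsequence $\lambda_{n}^{j}/\lambda_{n}^{j'} + \lambda_{n}^{j'}/\lambda_{n}^{j}$ and $|t_{n}^{j}(\lambda_{n}^{j})^{2} - t_{n}^{j'}(\lambda_{n}^{j'})^{2}|/(\lambda_{n}^{j}\lambda_{n}^{j'})$ are both bounded, so the unitary operators $(g_{n}^{j'} e^{it_{n}^{j'}\Delta})^{-1} g_{n}^{j} e^{it_{n}^{j}\Delta}$ converge, up to a further subsequence, in the strong operator topology of $L^{2}_{m}$ to a fixed dilation--phase--time-translation $h$. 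Applying $(g_{n}^{j'} e^{it_{n}^{j'}\Delta})^{-1}$ to the identity $w_{n}^{j} = w_{n}^{j-1} - g_{n}^{j} e^{it_{n}^{j}\Delta}\phi^{j}$ and iterating down to $w_{n}^{j'-1}$, one finds that the weak limit of $(g_{n}^{j'} e^{it_{n}^{j'}\Delta})^{-1} w_{n}^{j'-1}$, namely $\phi^{j'}$, is forced to equal $0$, contradicting $\phi^{j'} \neq 0$; here one uses $(g_{n}^{j} e^{it_{n}^{j}\Delta})^{-1} w_{n}^{j} \rightharpoonup 0$ and the fact that the profiles with indices strictly between $j$ and $j'$ are already orthogonal to the $j$-th by the induction hypothesis. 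This last step is the only part requiring a genuine (though routine) computation. The main obstacle in the whole proposition is the inverse Strichartz inequality for equivariant $L^{2}$ data, which for the general $2$D mass-critical case is the technically heaviest ingredient; in the equivariant class the combinatorics simplify considerably because the translation and Galilean parameters disappear, and I would import the needed statement verbatim from \cite{dodson2023sequential}.
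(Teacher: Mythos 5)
Your outline is correct and is essentially the standard Bourgain--Keraani argument; note that the paper does not prove this proposition at all but simply cites it as Proposition 3.1 of \cite{liu2016global}, where the same inductive scheme (equivariant inverse Strichartz inequality with the symmetry group reduced to dilations and phases, Pythagorean mass decoupling from weak convergence, and asymptotic orthogonality of parameters by contradiction) is carried out. The one imprecision is that your inverse Strichartz inequality should be formulated for a bounded sequence $f_{n}$ with $\liminf_{n}\| e^{it\Delta} f_{n}\|_{L^{4}_{t,x}} \geq \epsilon$, with the weak limit $\phi$ extracted along a subsequence --- as written, the assertion $e^{-i\tau\Delta} g^{-1} f \rightharpoonup \phi$ for a single fixed $f$ is not meaningful --- though your subsequent application to $\psi_{n}$ makes clear that the sequential version is intended.
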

\begin{proof}
This is Proposition $3.1$ of \cite{liu2016global}.
\end{proof}

Specifically, let $t_{n} \nearrow \sup(I)$ be a sequence and let $\psi_{n} = u(t_{n})$ and apply Proposition $\ref{p2.2}$. Then possibly after passing to a subsequence,

\begin{claim}
If $u$ is a blowup solution to $(\ref{1.1})$, there exists some $j$ such that $\phi^{j} \neq 0$.
\end{claim}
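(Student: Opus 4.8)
The plan is to argue by contradiction, running the standard concentration–compactness alternative. Suppose $\phi^{j} = 0$ for every $j$. Then for each fixed $J$ the decomposition $(\ref{2.3})$ degenerates to $\psi_{n} = w_{n}^{J}$, so that $e^{it\Delta}u(t_{n}) = e^{it\Delta}w_{n}^{J}$ identically in $n$ and $J$. Feeding this into the asymptotic smallness $(\ref{2.3.2})$ of the remainder yields
\[
\limsup_{n \to \infty} \| e^{it\Delta} u(t_{n}) \|_{L_{t,x}^{4}(\mathbb{R} \times \mathbb{R}^{2})} = 0 .
\]
In particular, after passing to the relevant tail of the sequence, the free evolution of the datum $u(t_{n})$ has arbitrarily small scattering size on the forward interval $[t_{n}, \infty)$ (here using the trivial bound $\| e^{it\Delta} f \|_{L_{t,x}^{4}([a,\infty) \times \mathbb{R}^{2})} \leq \| e^{it\Delta} f \|_{L_{t,x}^{4}(\mathbb{R} \times \mathbb{R}^{2})}$).

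Next I would invoke the small–scattering–size local theory for $(\ref{1.1})$ in $L_{m}^{2}$. Although the mass $\| u(t_{n}) \|_{L^{2}}^{2} = c_{m,g}$ is fixed and not small, the relevant perturbation lemma for the equivariant Chern--Simons--Schr{\"o}dinger equation established in \cite{liu2016global} — the analogue of the mass-critical NLS stability theory — asserts that whenever an $L_{m}^{2}$ datum of bounded mass has sufficiently small free-evolution $L_{t,x}^{4}$ norm on a forward time interval, the corresponding nonlinear solution exists on that entire interval with $\| u \|_{L_{t,x}^{4}} \lesssim \| e^{it\Delta}(\text{datum}) \|_{L_{t,x}^{4}}$. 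Applying this with datum $u(t_{n})$ at time $t_{n}$, for $n$ large the maximal forward interval of existence of $u$ contains $[t_{n}, \infty)$ and $\| u \|_{L_{t,x}^{4}([t_{n},\infty) \times \mathbb{R}^{2})} \ll 1$. Since the local theory also gives $\| u \|_{L_{t,x}^{4}([0,t_{n}] \times \mathbb{R}^{2})} < \infty$, we conclude $\| u \|_{L_{t,x}^{4}([0,\infty) \times \mathbb{R}^{2})} < \infty$, contradicting the blowup hypothesis $(\ref{2.1})$. Hence some $\phi^{j}$ is nonzero.

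I expect the only genuine content to be the correct invocation of this perturbation statement. Unlike NLS, equation $(\ref{1.1})$ is nonlocal and contains the quintic term $r^{-2} A_{\theta}[u]^{2} u$ together with the magnetic terms $r^{-2} A_{\theta}[u] u$ and $A_{0}[u] u$, so one must check that the nonlinear estimates closing the bootstrap are precisely those available for the equivariant system; these are exactly the estimates underlying the local and stability theory in \cite{liu2016global}, so no new analysis is needed. The remaining steps — degeneration of the decomposition, passage to a subsequence, and the final norm bookkeeping on $[0,t_{n}]$ versus $[t_{n},\infty)$ — are routine.
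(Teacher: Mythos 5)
Your argument is correct and is exactly the ``perturbative argument'' the paper invokes in its one-line proof: if all profiles vanish then $u(t_{n})=w_{n}^{J}$, $(\ref{2.3.2})$ gives asymptotically vanishing free-evolution scattering size, and the small-scattering-size/stability theory of \cite{liu2016global} forces $u$ to scatter forward, contradicting $(\ref{2.1})$. No substantive difference from the paper's intended proof.
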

\begin{proof}
Otherwise, by a perturbative argument, $(\ref{2.3.2})$ implies that $u$ is a scattering solution. Relabeling, suppose $\phi^{1} \neq 0$.
\end{proof}

\begin{claim}
$\| \phi^{j} \|_{L^{2}} = 0$ for $j > 1$. 
\end{claim}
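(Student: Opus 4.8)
The plan is to run the standard minimal-mass argument via the nonlinear profile decomposition, using that, by Theorem $\ref{t1.4}$, $c_{m,g}$ is the threshold below which every solution of $(\ref{1.1})$ is global and scatters. Suppose toward a contradiction that, after relabeling, $\phi^{1} \neq 0$ and $\phi^{2} \neq 0$. By the mass decoupling $(\ref{2.4})$, conservation of mass, and the hypothesis $\| u(t_{n}) \|_{L^{2}}^{2} = c_{m,g}$, one gets $\sum_{j} M(\phi^{j}) \leq c_{m,g}$; hence $M(\phi^{j}) \leq c_{m,g} - M(\phi^{1}) < c_{m,g}$ for every $j \geq 2$, and symmetrically $M(\phi^{1}) < c_{m,g}$, so \emph{every} nonzero profile has strictly subcritical mass.

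To each $\phi^{j}$ associate its nonlinear profile $v^{j}$, the maximal-lifespan solution of $(\ref{1.1})$ defined (after passing to a further subsequence) by $v^{j}(0) = \phi^{j}$ if $t_{n}^{j} \equiv 0$, and by the requirement that $v^{j}$ be asymptotic to $e^{it\Delta}\phi^{j}$ as $t \to \pm\infty$ if $t_{n}^{j} \to \pm\infty$. Since $M(v^{j}) = M(\phi^{j}) < c_{m,g}$, Theorem $\ref{t1.4}$ gives that each $v^{j}$ is global with $\| v^{j} \|_{L_{t,x}^{4}(\mathbb{R}\times\mathbb{R}^{2})} < \infty$; moreover, since $\sum_{j} M(\phi^{j})$ is finite, all but finitely many profiles are small-data, so $\| v^{j} \|_{L_{t,x}^{4}} \lesssim \| \phi^{j} \|_{L^{2}}$ for $j$ large and only finitely many carry non-negligible scattering size. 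Form the approximate solution $u_{n}^{J}(t) := \sum_{j=1}^{J} g_{n}^{j}\bigl[ v^{j}\bigl( (\lambda_{n}^{j})^{-2} t + t_{n}^{j} \bigr) \bigr] + e^{it\Delta} w_{n}^{J}$, with $g_{n}^{j}$ as in $(\ref{2.3.1})$, and check the three hypotheses of the long-time perturbation (stability) lemma for $(\ref{1.1})$ from \cite{liu2016global}: (i) $u_{n}^{J}(0)$ agrees with $\psi_{n} = u(t_{n})$ up to an $L^{2}$-error vanishing as $n \to \infty$ then $J \to \infty$; (ii) $\| u_{n}^{J} \|_{L_{t,x}^{4}}$ is bounded uniformly in $n$ and $J$, using $(\ref{2.3.3})$ to decouple the nonlinear interactions among distinct profiles; and (iii) the error $e_{n}^{J} := (i\partial_{t} + \Delta) u_{n}^{J} - F(u_{n}^{J})$, where $F$ is the full right-hand side of $(\ref{1.1})$, tends to zero in the relevant dual Strichartz norm as $n \to \infty$ then $J \to \infty$. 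Granting these, the genuine solution with data $u(t_{n})$, i.e. a time translate of $u$, would have finite $L_{t,x}^{4}$ norm on its maximal forward interval, so $\sup(I) = \infty$ and $u$ scatters forward, contradicting $(\ref{2.1})$. Hence at most one $\phi^{j}$ is nonzero, which together with the preceding claim is the assertion.

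The matching of initial data, the decoupling of the linear pieces, and the boundedness of $e^{it\Delta}w_{n}^{J}$ are routine given $(\ref{2.3.2})$--$(\ref{2.3.3})$ and the small-data theory. The main obstacle, and the point where this equation departs from NLS, is step (iii): controlling the error from the \emph{nonlocal} terms $\tfrac{2m}{r^{2}} A_{\theta}[u]u$, $A_{0}[u]u$, and $\tfrac{1}{r^{2}} A_{\theta}[u]^{2} u$. Expanding $A_{\theta}[u_{n}^{J}]$ and $A_{0}[u_{n}^{J}]$, which are quadratic, respectively quartic, in $u_{n}^{J}$, produces cross terms such as $A_{\theta}[v^{j}]\, v^{k}$ with $j \neq k$ that have no analogue in a local equation, and one must show they are negligible in the appropriate norm. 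The mechanism is the parameter divergence $(\ref{2.3.3})$: distinct profiles live either at asymptotically separated scales or at asymptotically separated times, so $A_{\theta}$ and $A_{0}$, being explicit radial integrals of $|v^{j}|^{2}$, when multiplied against a profile $v^{k}$ concentrated elsewhere are small; the equivariant form of $(\ref{1.2})$ makes these integrals amenable to scale-separation and Hardy-type estimates. The interaction of a single profile $v^{j}$ with the radiation $e^{it\Delta}w_{n}^{J}$ inside the nonlocal nonlinearities is the other place requiring care, again handled via $(\ref{2.3.2})$ together with the Hardy and Strichartz estimates available in the equivariant class.
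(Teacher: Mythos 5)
Your proposal is correct and follows essentially the same route as the paper: the paper's proof is just the one-line observation that mass decoupling $(\ref{2.4})$ forces every profile to have mass strictly below $c_{m,g}$, whence ``by \cite{liu2016global} and standard perturbative arguments'' $u$ scatters, contradicting blowup. You have simply written out in full the nonlinear profile decomposition and stability argument (including the genuine extra work needed for the nonlocal terms $A_{\theta}$ and $A_{0}$) that the paper delegates to \cite{liu2016global}.
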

\begin{proof}
Otherwise, by $(\ref{2.4})$, if $\| \phi^{2} \|_{L^{2}} > 0$ for $j > 2$, then $\| \phi^{j} \|_{L^{2}}^{2} < c_{m, g}$ for all $j$. By \cite{liu2016global} and standard perturbative arguments, $u$ scatters forward in time.
\end{proof}

\begin{claim}
For any $J > 1$,
\begin{equation}\label{2.5}
\lim_{n \rightarrow \infty} \| w_{n}^{J} \|_{L^{2}} = 0.
\end{equation}
\end{claim}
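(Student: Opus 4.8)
The plan is to combine the mass-decoupling identity $(\ref{2.4})$ with the sub-threshold scattering theory of \cite{liu2016global} and the blowup hypothesis $(\ref{2.1})$. By the two preceding claims, after passing to a subsequence we may relabel so that $\phi^{1} \neq 0$ while $\phi^{j} = 0$ in $L_{m}^{2}$ for every $j \geq 2$; consequently all of the remainders $w_{n}^{J}$, $J \geq 1$, coincide with a single sequence $w_{n} := w_{n}^{1}$, the decomposition $(\ref{2.3})$ collapses to $u(t_{n}) = g_{n}^{1} e^{i t_{n}^{1} \Delta} \phi^{1} + w_{n}$, and $(\ref{2.3.2})$ becomes $\| e^{it\Delta} w_{n} \|_{L_{t,x}^{4}} \to 0$. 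Passing to a further subsequence so that $\| w_{n} \|_{L^{2}}$ converges, $(\ref{2.4})$ with $J = 1$ gives
\begin{equation*}
\lim_{n \to \infty} \| w_{n} \|_{L^{2}}^{2} = c_{m,g} - \| \phi^{1} \|_{L^{2}}^{2} \geq 0 .
\end{equation*}
Hence $(\ref{2.5})$ is equivalent to $\| \phi^{1} \|_{L^{2}}^{2} = c_{m,g}$, and it remains only to rule out $\| \phi^{1} \|_{L^{2}}^{2} < c_{m,g}$.

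Assume for contradiction that $\| \phi^{1} \|_{L^{2}}^{2} < c_{m,g}$. First I would attach to $\phi^{1}$ its nonlinear profile $v$: if $(t_{n}^{1})_{n}$ is bounded, pass to a subsequence with $t_{n}^{1} \to \tau$, replace $\phi^{1}$ by $e^{i\tau\Delta}\phi^{1}$ so that $t_{n}^{1} = 0$, and let $v$ solve $(\ref{1.1})$ with $v(0) = \phi^{1}$; if $t_{n}^{1} \to \pm\infty$, let $v$ be the solution of $(\ref{1.1})$ that scatters to $e^{it\Delta}\phi^{1}$ as $t \to \pm\infty$, which exists by the wave-operator construction in \cite{liu2016global}. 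In every case $\| v \|_{L^{2}}^{2} = \| \phi^{1} \|_{L^{2}}^{2} < c_{m,g}$, so by Theorem $\ref{t1.4}$ the solution $v$ is global with $S := \| v \|_{L_{t,x}^{4}(\mathbb{R}\times\mathbb{R}^{2})} < \infty$.

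Next I would form an approximate solution of $(\ref{1.1})$ on $[t_{n}, \sup(I))$. Let $v_{n}$ be the image of $v$ under the symmetry $g_{n}^{1}$ and the induced time rescaling, together with a time translation by $-t_{n}^{1}(\lambda_{n}^{1})^{-2}$ in the unbounded case, so that $v_{n}$ solves $(\ref{1.1})$ exactly, $\| v_{n} \|_{L_{t,x}^{4}} = S$, and $v_{n}(0) - g_{n}^{1} e^{i t_{n}^{1}\Delta}\phi^{1} \to 0$ in $L^{2}$. Put $\tilde u_{n} := v_{n} + e^{it\Delta} w_{n}$, so that $\tilde u_{n}(0) - u(t_{n}) \to 0$ in $L^{2}$ and $\| \tilde u_{n} \|_{L_{t,x}^{4}} \lesssim S + o(1)$. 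Inserting $\tilde u_{n}$ into $(\ref{1.1})$, the resulting error is a finite sum of cross terms each carrying at least one factor $e^{it\Delta} w_{n}$ --- including those produced by the nonlocal potentials $A_{\theta}[\tilde u_{n}]$, $A_{0}[\tilde u_{n}]$ and $A_{\theta}[\tilde u_{n}]^{2}$ evaluated at the sum --- and the multilinear estimates of \cite{liu2016global} bound it in the relevant dual Strichartz norm by $o(1)$, because $e^{it\Delta} w_{n}$ is small in $L_{t,x}^{4}$ while bounded in $L^{2}$. The perturbation lemma of \cite{liu2016global} then forces, for $n$ large, $\| u \|_{L_{t,x}^{4}([t_{n}, \sup(I))\times\mathbb{R}^{2})} \lesssim S + 1 < \infty$, contradicting $(\ref{2.1})$. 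Hence $\| \phi^{1} \|_{L^{2}}^{2} = c_{m,g}$, and $(\ref{2.5})$ follows for every $J$.

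The main obstacle is building $\tilde u_{n}$ and checking that its error is small --- that is, the stability theory for $(\ref{1.1})$ --- precisely because $(\ref{1.1})$ is nonlocal: one has to control the contributions of $A_{\theta}$, $A_{0}$ and $A_{\theta}^{2}$ evaluated at a superposition of a concentrated nonlinear profile and a delocalized linear remainder, rather than at a single function. This machinery is, however, exactly what \cite{liu2016global} develops for the minimal-mass analysis, so it can be invoked; the only genuinely new input is the reduction to a single profile, performed in the two preceding claims, after which the argument and the case split on whether $t_{n}^{1}$ remains bounded are routine.
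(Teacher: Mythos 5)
Your proposal is correct and follows exactly the paper's route: mass decoupling forces $\|\phi^{1}\|_{L^{2}}^{2} < c_{m,g}$ if the remainder does not vanish, and then the sub-threshold scattering theory plus the stability/perturbation machinery of \cite{liu2016global} yields scattering of $u$, contradicting $(\ref{2.1})$. The paper compresses all of this into the phrase ``by standard perturbative arguments, $u$ scatters forward in time''; you have simply written out the standard nonlinear-profile and approximate-solution details that this phrase refers to.
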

\begin{proof}
Otherwise by $(\ref{2.4})$, if $\limsup_{n \rightarrow \infty} \| w_{n}^{J} \|_{L^{2}} > 0$, and therefore $\| \phi^{1} \|_{L^{2}}^{2} < c_{m, g}$. Then by standard perturbative arguments, $u$ scatters forward in time.
\end{proof}

\begin{claim}
After possibly passing to a subsequence, the sequence $t_{n}^{1}$ converges as $n \rightarrow \infty$. 
\end{claim}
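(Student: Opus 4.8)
The plan is to combine the output of the three preceding claims with Strichartz estimates and the stability theory for $(\ref{1.1})$. By Claims~$2$ and~$3$ (so that $\phi^j = 0$ for $j > 1$) together with $(\ref{2.5})$, after passing to a subsequence we may write
\[
u(t_n) = g_n^1 e^{i t_n^1 \Delta} \phi^1 + w_n^1, \qquad \phi^1 \in L_m^2, \quad \phi^1 \neq 0, \quad \| w_n^1 \|_{L^2} \to 0,
\]
where $g_n^1 f(x) = e^{i \gamma_n^1} \lambda_n^1 f(\lambda_n^1 x)$. Passing to a further subsequence, either $t_n^1$ converges to a finite limit --- in which case we are done --- or $t_n^1 \to +\infty$, or $t_n^1 \to -\infty$. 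The task is to rule out the latter two alternatives, and in both cases I would derive a contradiction with the forward blowup hypothesis $(\ref{2.1})$.

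First I would record the commutation identity
\[
e^{it\Delta}\bigl(g_n^1 e^{i t_n^1 \Delta} \phi^1\bigr)(x) = e^{i\gamma_n^1} \lambda_n^1 \bigl(e^{i((\lambda_n^1)^2 t + t_n^1)\Delta} \phi^1\bigr)(\lambda_n^1 x),
\]
which expresses that the Schr\"odinger group commutes with the mass-critical scaling (there are no translation or Galilean parameters here because we are in the $m$-equivariant setting). Since the $L^4_{t,x}(\mathbb{R}\times\mathbb{R}^2)$ norm of a free solution is invariant under that scaling, translating the time variable by $t_n^1$ and rescaling would give, for $J = [0,\infty)$ or $J = (-\infty, 0]$,
\[
\bigl\| e^{it\Delta}\bigl(g_n^1 e^{i t_n^1 \Delta}\phi^1\bigr) \bigr\|_{L^4_{t,x}(J \times \mathbb{R}^2)} = \bigl\| e^{i\sigma\Delta}\phi^1 \bigr\|_{L^4_{\sigma,x}(J_n \times \mathbb{R}^2)},
\]
with $J_n = [t_n^1, \infty)$ when $J = [0, \infty)$ and $J_n = (-\infty, t_n^1]$ when $J = (-\infty, 0]$. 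Because $\phi^1 \in L^2$, Strichartz gives $\| e^{i\sigma\Delta}\phi^1 \|_{L^4_{\sigma,x}(\mathbb{R}\times\mathbb{R}^2)} < \infty$, so these truncated norms vanish as $t_n^1 \to +\infty$ (respectively as $t_n^1 \to -\infty$); combined with $\| e^{it\Delta} w_n^1 \|_{L^4_{t,x}} \lesssim \| w_n^1 \|_{L^2} \to 0$ this shows that $t_n^1 \to +\infty$ forces $\| e^{it\Delta} u(t_n) \|_{L^4_{t,x}([0,\infty) \times \mathbb{R}^2)} \to 0$, while $t_n^1 \to -\infty$ forces $\| e^{it\Delta} u(t_n) \|_{L^4_{t,x}((-\infty, 0] \times \mathbb{R}^2)} \to 0$.

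Next I would feed this into the stability theory for $(\ref{1.1})$ --- the same perturbative input already used in the previous claims and in \cite{liu2016global} --- which says that if the free evolution of the data $u(t_n)$ has sufficiently small scattering size on a half-line, then the nonlinear solution with that data exists on that half-line with comparably small $L^4_{t,x}$ norm there and, by uniqueness, agrees with $u(t_n + \cdot)$. If $t_n^1 \to +\infty$ this yields $\| u \|_{L^4_{t,x}([t_n, \sup(I)) \times \mathbb{R}^2)} \to 0$; since also $\| u \|_{L^4_{t,x}([0, t_n] \times \mathbb{R}^2)} < \infty$ for each $n$, one gets $\| u \|_{L^4_{t,x}([0, \sup(I)) \times \mathbb{R}^2)} < \infty$, contradicting $(\ref{2.1})$. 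If $t_n^1 \to -\infty$ the solution with data $u(t_n)$ is instead global and small backwards in time, so $\| u \|_{L^4_{t,x}([0, t_n] \times \mathbb{R}^2)} \to 0$; but $[0, \sup(I)) = \bigcup_n [0, t_n]$ forces $\| u \|_{L^4_{t,x}([0, t_n]\times\mathbb{R}^2)} \to \| u \|_{L^4_{t,x}([0,\sup(I))\times\mathbb{R}^2)} = +\infty$ by $(\ref{2.1})$, again a contradiction. Hence $t_n^1$ stays bounded and, after passing to a subsequence, converges.

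I expect the argument to be essentially routine once the linear profile decomposition is in hand; the two places that need care are the bookkeeping in the commutation identity (the factors of $(\lambda_n^1)^2$ attached to the time variable, which are exactly what make the $L^4_{t,x}$ norm collapse onto the half-line $J_n$ with endpoint $t_n^1$) and invoking the stability theory in the form appropriate to the nonlocal equation $(\ref{1.1})$ rather than the pure-power NLS. The one conceptual point worth emphasizing is that the alternative $t_n^1 \to -\infty$ is genuinely excluded by the \emph{forward} blowup hypothesis $(\ref{2.1})$: backward dispersion of $u(t_n)$ makes the scattering size of $u$ on the fixed-type interval $[0, t_n]$ small, which is incompatible with $(\ref{2.1})$.
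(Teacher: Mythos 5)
Your proposal is correct and follows essentially the same route as the paper: the paper's (very terse) proof likewise rules out $t_n^1 \to +\infty$ by forward scattering and $t_n^1 \to -\infty$ by backward scattering in contradiction with $\|u\|_{L^4_{t,x}((\inf(I), t_n]\times\mathbb{R}^2)} \to \infty$, and your write-up just supplies the standard commutation/stability details that the paper leaves implicit.
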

\begin{proof}
If $t_{n}^{1} \rightarrow +\infty$ then we have scattering forward in time. If $t_{n}^{1} \rightarrow -\infty$, we have scattering backward in time, which contradicts
\begin{equation}\label{2.6}
\| u \|_{L_{t,x}^{4}(\inf(I), t_{n}] \times \mathbb{R}^{2})} \rightarrow +\infty,
\end{equation}
as $n \rightarrow \infty$.
\end{proof}

Therefore, possibly after passing to a subsequence,
\begin{equation}\label{2.7}
(g_{n}^{1})^{-1} u(t_{n}, x) \rightarrow \phi^{1}, \qquad \phi^{1} \in L^{2}, \qquad \| \phi^{1} \|_{L^{2}} = c_{m, g}.
\end{equation}
Now then, by construction, $\phi^{1}$ is the initial value of a blowup solution to $(\ref{1.1})$ that blows up both forward and backward in time. Let $\phi$ be the solution to $(\ref{1.1})$ with initial data $\phi^{1}$. Since $\phi$ is a minimal mass blowup solution, then after making the concentration compactness argument in \cite{liu2016global}, there exists $\lambda(t)$, $\gamma(t)$ such that
\begin{equation}\label{2.8}
e^{i \gamma(t)} \lambda(t) \phi(t, \lambda(t) \cdot) \in K \subset L^{2},
\end{equation}
where $K \subset L^{2}$ is a precompact set. Furthermore, following the reduction to three enemies in  \cite{liu2016global}, see also \cite{killip2009cubic}. there exist $t_{n}$ such that
\begin{equation}\label{2.9}
\lambda(t_{n}) e^{i \gamma(t_{n})} \phi(t_{n}, \lambda(t_{n}) \cdot) \rightarrow v_{0} \in L^{2}.
\end{equation}
Furthermore, $v_{0}$ is the initial data for a solution to $(\ref{1.1})$ that satisfies $(\ref{2.8})$ and $\lambda(t)$ satisfies one of three cases:

\begin{itemize}
\item $\lambda(t) = 1$ for all $t \in \mathbb{R}$,

\item $\lambda(t) \geq 1$ for all $t \in \mathbb{R}$ and $\limsup_{t \rightarrow \pm \infty} \lambda(t) = \infty$,

\item $\lambda(t) = t^{1/2}$ for $t \in (0, \infty)$.
\end{itemize}

Using the additional regularity argument in \cite{liu2016global}, $E(v) = 0$, so since $\| v \|_{L^{2}}^{2} = c_{m, g}$, $v_{0}$ is a soliton. See Proposition $3.7$ of \cite{li2022threshold} for the proof that the solitons are the only zero energy $m$-equivariant functions satisfying $\| v \|_{L^{2}}^{2} = c_{m, g}$. Therefore,
\begin{equation}\label{2.10}
e^{i \gamma(\tau_{n'})} \lambda(\tau_{n'}) \phi(\tau_{n'}, \lambda(\tau_{n'}) \cdot) \rightarrow \psi^{(m, g)}, \qquad \text{in} \qquad L^{2}.
\end{equation}
Therefore, choosing $n(n')$ sufficiently large,
\begin{equation}\label{2.11}
e^{i \gamma(\tau_{n'})} \lambda(\tau_{n'}) (g_{n}^{1})^{-1} u(t_{n} + \lambda(t_{n})^{-2} \tau_{n'}, \lambda(\tau_{n'}) \cdot) \rightarrow \psi^{(m, g)}, \qquad \text{in} \qquad L^{2}.
\end{equation}

\section{Reduction of a blowup solution}

Fix some $0 < \eta_{\ast} \ll 1$ sufficiently small. To prove Theorem $\ref{t1.2}$, it suffices to prove the following. To simplify notation let $Q = \psi^{(m, g)}$ be the positive solution to the standing wave equation $(\ref{1.10})$. Theorem $\ref{t1.1}$ can be reduced to Theorem $\ref{t3.1}$.

\begin{theorem}\label{t3.1}
If $u$ is a blowup solution to $(\ref{1.1})$ that satisfies $\| u \|_{L^{2}}^{2} = c_{m, g}$ and for all $t \geq 0$,
\begin{equation}\label{3.1}
\inf_{\lambda > 0, \gamma \in \mathbb{R}} \| e^{i \gamma} \lambda u(t, \lambda x) - Q \|_{L^{2}} \leq \eta_{\ast},
\end{equation}
then Theorem $\ref{t1.1}$ is true.
\end{theorem}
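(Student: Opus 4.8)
The plan is to show that hypothesis \eqref{3.1} costs no generality: any minimal-mass blowup solution can be moved, by the symmetries of \eqref{1.1}, into the regime \eqref{3.1}, so that the rigidity statement proved under \eqref{3.1} yields Theorem \ref{t1.1} in full. The starting point is Theorem \ref{t2.1} applied to $u$ itself, giving a sequence $s_{n} \nearrow T := \sup(I)$ and parameters $\lambda_{n} > 0$, $\gamma_{n} \in [0,2\pi)$ with $e^{i\gamma_{n}}\lambda_{n} u(s_{n},\lambda_{n}\cdot) \to Q$ in $L^{2}$, together with the fact, from the concentration--compactness reduction of \cite{liu2016global}, that a minimal-mass non-scattering solution is almost periodic modulo the scaling--phase group; write $d(t) := \inf_{\lambda>0,\gamma}\|e^{i\gamma}\lambda u(t,\lambda\cdot) - Q\|_{L^{2}}$, so $d(s_{n}) \to 0$ and the orbit $\{e^{i\gamma(t)}\lambda(t)u(t,\lambda(t)\cdot)\}$ is precompact in $L^{2}_{m}$.

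The first real step is to upgrade the sequential convergence to $d(t) \to 0$ as $t \nearrow T$. If this failed, there would be $r_{k} \nearrow T$ with $d(r_{k}) \geq \delta > 0$; passing to a subsequence and using precompactness, the optimally renormalized solutions $e^{i\gamma_{k}}\lambda_{k}u(r_{k},\lambda_{k}\cdot)$ converge in $L^{2}$ to some $v_{0}$ with $\|v_{0}\|_{L^{2}}^{2} = c_{m,g}$ and $\inf_{\lambda,\gamma}\|e^{i\gamma}\lambda v_{0}(\lambda\cdot) - Q\|_{L^{2}} \geq \delta$. Choosing $\lambda_{k}$ to be the concentration scale at time $r_{k}$ (a minimal-mass blowup concentrates at the pseudoconformal rate), the renormalized existence times $(T - r_{k})/\lambda_{k}^{2}$ tend to infinity, so $v_{0}$ launches a forward-global, almost-periodic minimal-mass solution $v$; running on $v$ the reduction to the three enemies, the additional-regularity argument of \cite{liu2016global} (which gives $v(t) \in H^{s}$ for $s \leq 2$), and the resulting vanishing of the energy, one obtains $E(v) \equiv 0$, so $v(t)$ is for every $t$ a zero-energy, $m$-equivariant, minimal-mass function; by Proposition $3.7$ of \cite{li2022threshold} such a function is a symmetry image of $Q$, in particular $v_{0}$ is, contradicting $\delta > 0$. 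Hence $d(t) \to 0$ as $t \nearrow T$. (An alternative to this compactness argument is a direct modulation-plus-monotonicity argument, as in \cite{dodson2021determination2}.)

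With $d(t) \to 0$ in hand, pick $T_{0} \in [0,T)$ with $d(t) \leq \eta_{\ast}$ for all $t \in [T_{0},T)$; then $\tilde u(t,x) := u(t + T_{0},x)$ is a minimal-mass solution on $[0,T-T_{0})$ satisfying \eqref{3.1} for all $t \geq 0$. The rigidity carried out under hypothesis \eqref{3.1} identifies $\tilde u$ with $e^{i\gamma}PC_{T-T_{0}}[\lambda\psi^{(m,g)}(\lambda^{2}\cdot,\lambda\cdot)]$ for some $\gamma,\lambda$; since the restriction of $u$ to $[T_{0},T)$ equals $\tilde u(\cdot - T_{0})$ and solutions of \eqref{1.1} are unique, $u$ coincides on $[T_{0},T)$ --- hence, by uniqueness and persistence of regularity, on its whole interval of existence --- with $e^{i\gamma}PC_{T}[\lambda\psi^{(m,g)}(\lambda^{2}\cdot,\lambda\cdot)]$. (A minimal-mass solution that blows up in neither time direction is, by \cite{liu2016global}, a non-scattering global solution, hence a standing wave $\psi^{(m,\alpha)}$.) This is exactly the conclusion of Theorem \ref{t1.1}.

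The step I expect to be the main obstacle is the upgrade of the convergence, and with it the fact, stressed in the introduction, that \eqref{1.1} is not local: the gauge potentials $A_{\theta}[u]$ and $A_{0}[u]$ are spatial integrals of $|u|^{2}$, so the truncated virial and local-smoothing identities behind the $H^{s}$-regularity and ``$E = 0$'' arguments on the limiting profile pick up tail contributions from these potentials. For $g > 1$ the soliton decays exponentially, which controls these contributions for the limit, but propagating the estimates along the almost-periodic orbit --- where one has only $L^{2}$ precompactness rather than pointwise decay --- is the delicate point, exactly as in the harder parts of \cite{dodson2021determination2} transported to the present non-local setting.
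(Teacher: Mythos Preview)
Your reduction is structurally different from the paper's. You attempt to prove outright that $d(t)\to 0$ as $t\nearrow T$, so that after a time translation hypothesis \eqref{3.1} holds and Theorem~\ref{t3.1} applies directly. The paper does \emph{not} prove $d(t)\to 0$ at this stage. Instead it runs a dichotomy: either $d(t)\le\eta_\ast$ for all $t\ge t_0$ (Case~1, where Theorem~\ref{t3.1} applies immediately), or $d$ keeps returning to the level $\eta_\ast$ (Case~2). In Case~2 the paper picks the \emph{exit times} $t_n^-$ with $d(t_n^-)=\eta_\ast$ and $d\le\eta_\ast$ on $[t_n^-,t_n]$, shows via \eqref{3.5} that the $L^2$-limit of the renormalized $u(t_n^-)$ launches a solution that blows up both forward and backward \emph{and} satisfies \eqref{3.1} forward in time, and then applies Theorem~\ref{t3.1} to \emph{that} auxiliary solution: the conclusion (soliton or pseudoconformal transform of one) is incompatible with double-sided blowup, so Case~2 is vacuous. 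The point is that Theorem~\ref{t3.1} itself is used as a black box to kill Case~2; no independent proof that $d(t)\to 0$ is needed.

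Your argument for $d(t)\to 0$ has two gaps. First, the claim that $(T-r_k)/\lambda_k^2\to\infty$ assumes the pseudoconformal blowup rate $\lambda(t)\sim T-t$, but a priori one only has $\lambda(t)\lesssim (T-t)^{1/2}$ from local theory; the sharper rate is part of what the full rigidity eventually yields, so invoking it here is circular. Second, and more seriously, the step ``running on $v$ the reduction to the three enemies \ldots\ one obtains $E(v)\equiv 0$'' does not give what you need. The three-enemies reduction passes to a \emph{further} limit $w$ of renormalized $v(\tau_m)$, and it is for $w$ that $E=0$ is established; even if the additional-regularity argument places $v$ in $H^s$ so that $E(v)$ is defined and conserved, the convergence $\mu_m v(\tau_m,\mu_m\cdot)\to Q$ is only in $L^2$ and the scales $\mu_m$ need not remain bounded, so one cannot pass to the limit in the energy to conclude $E(v_0)=0$. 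The paper's exit-time argument sidesteps both issues: it never needs the blowup rate, and it never needs $E=0$ for the extracted profile---it only needs that the profile satisfies \eqref{3.1} forward and blows up in both time directions, which follows from continuity of $d$ and \eqref{3.5}.
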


\begin{proof}[Theorem $\ref{t3.1}$ implies Theorem $\ref{t1.1}$]
Suppose that $u$ is a solution to $(\ref{1.1})$ that blows up forward in time and satisfies $\| u \|_{L^{2}}^{2} = c_{m, g}$. Consider two cases separately.\medskip

\noindent \textbf{Case 1:} There exists some $t_{0} > 0$ such that, for all $t \in [t_{0}, \sup(I))$,
\begin{equation}\label{3.2}
\inf_{\lambda > 0, \gamma \in \mathbb{R}} \| e^{i \gamma} \lambda u(t, \lambda \cdot) - Q \|_{L^{2}} \leq \eta_{\ast}.
\end{equation}
In this case, Theorem $\ref{t3.1}$ reduces to Theorem $\ref{t1.1}$.\medskip

\noindent \textbf{Case 2:} There exist a sequences $t_{n}, t_{n}^{-} \nearrow \sup(I)$ such that
\begin{equation}\label{3.3}
\sup_{t \in [t_{n}^{-}, t_{n}]} \inf_{\lambda > 0, \gamma \in \mathbb{R}} \| e^{i \gamma} \lambda u(t_{n}, \lambda \cdot) - Q \|_{L^{2}} \leq \eta_{\ast},
\end{equation}
\begin{equation}\label{3.4}
\inf_{\lambda > 0, \gamma \in \mathbb{R}} \| e^{i \gamma} \lambda u(t_{n}^{-}, \lambda \cdot) - Q \|_{L^{2}} = \eta_{\ast},
\end{equation}
and
\begin{equation}\label{3.5}
\lim_{n \rightarrow \infty} \| u \|_{L_{t,x}^{4}((\inf(I), t_{n}^{-}] \times \mathbb{R}^{2})} = \lim_{n \rightarrow \infty} \| u \|_{L_{t,x}^{4}([t_{n}^{-}, t_{n}] \times \mathbb{R}^{2})} = \infty.
\end{equation}

To see why $(\ref{3.3})$--$(\ref{3.5})$ must hold for a blowup solution to $(\ref{1.1})$ that does not satisfy Case 1, observe that by Theorem $\ref{t2.1}$, there exists a sequence $t_{n} \nearrow \sup(I)$ such that $(\ref{2.2})$ holds. Since $(\ref{3.2})$ does not hold, we also have $t_{n}^{-} \nearrow \sup(I)$.\medskip

Next, recall the Strichartz estimates of \cite{yajima1987existence}, \cite{ginibre1992smoothing}, and \cite{tao2000spherically}.

\begin{lemma}[Strichartz estimates]\label{l3.2}
Let $(i \partial_{t} + \Delta) u = f$ on a time interval $I$ with $t_{0} \in I$ and $u(t_{0}) = u_{0}$. A pair $(p, q)$ of exponents is called admissible if $2 \leq p, q \leq \infty$, $\frac{1}{p} + \frac{1}{q} = \frac{1}{2}$, and $(p, q) \neq (2, \infty)$. Let $(p, q)$ and $(\tilde{p}, \tilde{q})$ be admissible pairs of exponents. Then,
\begin{equation}\label{3.6}
\| u \|_{L_{t}^{\infty} L_{x}^{2}(I \times \mathbb{R}^{2})} + \| u \|_{L_{t}^{p} L_{x}^{q}(I \times \mathbb{R}^{2})} \lesssim \| u_{0} \|_{L^{2}} + \| f \|_{L_{t}^{\tilde{p}'} L_{x}^{\tilde{q}'}(I \times \mathbb{R}^{2})}.
\end{equation}
\end{lemma}

\begin{lemma}[Endpoint Strichartz estimates]\label{l3.3}
Let $(i \partial_{t} + \Delta) u = f$ on a time interval $I$ with $t_{0} \in I$ and $u(t_{0}) = u_{0}$, and suppose that $m \in \mathbb{Z}$ and $u_{0} \in L_{m}^{2}$, $f \in L_{t}^{1} L_{m}^{2}(I \times \mathbb{R}^{2})$. Let $(p, q)$ be an admissible pair of exponents. Then,
\begin{equation}\label{3.7}
\| u \|_{L_{t}^{2} L_{x}^{\infty}(I \times \mathbb{R}^{2})} \lesssim \| u_{0} \|_{L^{2}} + \| f \|_{L_{t}^{p'} L_{x}^{q'}(I \times \mathbb{R}^{2})}.
\end{equation}
\end{lemma}

Finally,
\begin{lemma}[Control of the nonlinearity]\label{l3.4}
Let
\begin{equation}\label{3.8}
\Lambda(u) =  \frac{2m}{r^{2}} A_{\theta}[u] u + A_{0}[u] u + \frac{1}{r^{2}} A_{\theta}[u]^{2} u - g |u|^{2} u.
\end{equation}
We have
\begin{equation}\label{3.9}
\| \Lambda(u) \|_{L_{t,x}^{4/3}(I \times \mathbb{R}^{2})} \lesssim \| u \|_{L_{t,x}^{4}(I \times \mathbb{R}^{2})}^{3},
 \end{equation}
 and
 \begin{equation}\label{3.10}
 \| \Lambda(u) - \Lambda(\tilde{u}) \|_{L_{t,x}^{4/3}(I \times \mathbb{R}^{2})} \lesssim \| u - \tilde{u} \|_{L_{t,x}^{4}(I \times \mathbb{R}^{2})} (\| u \|_{L_{t,x}^{4}(I \times \mathbb{R}^{2})}^{2} + \| \tilde{u} \|_{L_{t,x}^{4}(I \times \mathbb{R}^{2})}^{2}).
 \end{equation}
\end{lemma}
\begin{proof}
This is proved in \cite{liu2016global}.
\end{proof}

It follows from Lemma $\ref{l3.4}$ that
\begin{equation}\label{3.11}
\inf_{\lambda > 0, \gamma \in \mathbb{R}} \| e^{i \gamma} \lambda u(t, \lambda x) - Q \|_{L^{2}},
\end{equation}
is continuous as a function in $t$. Therefore, for each $t_{n} \in I$, there exists some $t_{n}^{-} \in I$, $t_{n}^{-} < t_{n}$, such that
\begin{equation}\label{3.12}
\inf_{\lambda > 0, \gamma \in \mathbb{R}} \| e^{i \gamma} \lambda u(t_{n}^{-}, \lambda x) - Q \|_{L^{2}} = \eta_{\ast},
\end{equation}
and
\begin{equation}\label{3.13}
\sup_{t \in [t_{n}^{-}, t_{n}]} \inf_{\lambda > 0, \gamma \in \mathbb{R}} \| e^{i \gamma} \lambda u(t_{n}^{-}, \lambda x) - Q \|_{L^{2}} = \eta_{\ast}.
\end{equation}
Thus, $(\ref{3.3})$ and $(\ref{3.4})$ hold. Finally, using the perturbation result in Lemma $\ref{l3.4}$, $(\ref{3.9})$,
\begin{equation}\label{3.14}
\inf_{\lambda > 0, \gamma \in \mathbb{R}} \| e^{i \gamma} \lambda u(t', \lambda x) - Q \|_{L^{2}} \lesssim \inf_{\lambda > 0, \gamma \in \mathbb{R}} \| e^{i \gamma} \lambda u(t, \lambda x) - Q \|_{L^{2}},
\end{equation}
with implicit constant depending only on $u$, for any pair of times $t, t'$ such that
\begin{equation}\label{3.15}
\| u \|_{L_{t,x}^{4}([t, t'] \times \mathbb{R}^{2})} \leq 1.
\end{equation}
Since
\begin{equation}\label{3.16}
\inf_{\lambda > 0, \gamma \in \mathbb{R}} \| e^{i \gamma} \lambda u(t_{n}, \lambda x) - Q \|_{L^{2}} \rightarrow 0,
\end{equation}
$(\ref{3.14})$ and $(\ref{3.15})$ imply $(\ref{3.5})$.\medskip

Now, using Proposition $\ref{p2.2}$, there exists a sequence $g_{n} \in G$ and $u_{0} \in L^{2}$, $\| u_{0} \|_{L^{2}}^{2} = c_{m, g}$, such that
\begin{equation}\label{3.17}
g_{n}^{-1} u(t_{n}^{-1}, x) \rightarrow u_{0}, \qquad \text{in} \qquad L^{2}.
\end{equation}
Furthermore, by $(\ref{3.3})$--$(\ref{3.5})$, $u_{0}$ is the initial data to a solution to $(\ref{1.1})$ that blows up both forward and backward in time, satisfies $(\ref{3.2})$, and satisfies
\begin{equation}\label{3.18}
\inf_{\lambda > 0, \gamma \in \mathbb{R}} \| e^{i \gamma} \lambda u_{0}(\lambda x) - Q \|_{L^{2}} = \eta_{\ast}.
\end{equation}
But then by Theorem $\ref{t3.1}$, $u$ must be a pseudoconformal transformation of a soliton. However, this gives a contradiction, since pseudoconformal transformations of a soliton blow up in one time direction and scatter in the other. Therefore, Case 2 cannot happen.
\end{proof}

\section{Decomposition of the energy}
Now decompose the energy. Recall that
\begin{equation}\label{4.1}
E[u] = \frac{1}{2} \int |(\partial_{r} - \frac{m + A_{\theta}[u]}{r}) u|^{2} dx + \frac{1 - g}{4} \int |u|^{4}.
\end{equation}
Now then, let $u = Q + \epsilon$ for $\| \epsilon \|_{L^{2}} \ll 1$, $\epsilon$ is real valued.
\begin{equation}\label{4.2}
\aligned
\| (\partial_{r} - \frac{m + A_{\theta}[u]}{r}) u \|_{L^{2}} = \| (\partial_{r} - \frac{m + A_{\theta}[Q]}{r}) Q + (\frac{Re \int_{0}^{r} Q \bar{\epsilon} s ds}{r}) Q + (\partial_{r} - \frac{m + A_{\theta}[Q]}{r}) \epsilon \|_{L^{2}} \\ + O(\| \epsilon \|_{L^{2}} \| \epsilon \|_{\dot{H}_{m}^{1}} + \| \epsilon \|_{L^{2}}^{2}).
\endaligned
\end{equation}
Indeed, decompose
\begin{equation}\label{4.3}
(\partial_{r} - \frac{m + A_{\theta}[u]}{r})(Q + \epsilon) = (\partial_{r} - \frac{m + A_{\theta}[Q]}{r})Q - (\frac{A_{\theta}[u] - A_{\theta}[Q]}{r}) Q + (\partial_{r} - \frac{m + A_{\theta}[Q]}{r}) \epsilon - (\frac{A_{\theta}[u] - A_{\theta}[Q]}{r}) \epsilon.
\end{equation}
By direct computation,
\begin{equation}\label{4.4}
\| (\frac{A_{\theta}[u] - A_{\theta}[Q]}{r}) \epsilon\|_{L^{2}} = \| \frac{1}{2} (\int_{0}^{r} [|u|^{2} - Q^{2}] s ds) \frac{\epsilon}{r} \|_{L^{2}} \lesssim \| \epsilon \|_{L^{2}} \| \epsilon \|_{\dot{H}_{m}^{1}}.
\end{equation}
Meanwhile,
\begin{equation}\label{4.5}
-(\frac{A_{\theta}[u] - A_{\theta}[Q]}{r}) Q = \frac{1}{r} Re (\int_{0}^{r} Q \bar{\epsilon} s ds)Q + \frac{1}{2r} (\int_{0}^{r} |\epsilon|^{2} s ds) Q.
\end{equation}
Again by direct computation,
\begin{equation}\label{4.6}
\| \frac{Q}{2r} (\int_{0}^{r} |\epsilon|^{2} s ds) \|_{L^{2}} \lesssim \| \epsilon \|_{L^{2}}^{2}.
\end{equation}
Expanding
\begin{equation}\label{4.7}
\aligned
\frac{1}{2} \| (\partial_{r} - \frac{m + A_{\theta}[Q]}{r}) Q + (\frac{Re \int_{0}^{r} Q \bar{\epsilon} s ds}{r}) Q + (\partial_{r} - \frac{m + A_{\theta}[Q]}{r}) \epsilon \|_{L^{2}}^{2} + \frac{1 - g}{4} \| u \|_{L^{4}}^{4} \\
= \frac{1}{2} \| (\partial_{r} - \frac{m + A_{\theta}[Q]}{r}) Q \|_{L^{2}}^{2} + \frac{1 - g}{4} \| Q \|_{L^{4}}^{4} + \langle (\partial_{r} - \frac{m + A_{\theta}[Q]}{r}) Q, (\frac{Re \int_{0}^{r} Q \bar{\epsilon} s ds}{r}) Q + (\partial_{r} - \frac{m + A_{\theta}[Q]}{r}) \epsilon \rangle \\ + (1 - g) Re \int Q^{3} \bar{\epsilon} + \frac{1}{2} \|  (\frac{Re \int_{0}^{r} Q \bar{\epsilon} s ds}{r}) Q + (\partial_{r} - \frac{m + A_{\theta}[Q]}{r}) \epsilon \|_{L^{2}}^{2} + 3(1 - g) \int Q^{2} \epsilon^{2} \\ + O(\| \epsilon \|_{L^{2}}^{5/2} \| \epsilon \|_{\dot{H}_{m}^{1}}^{1/2} + \| \epsilon \|_{\dot{H}_{m}^{1}}^{2} \| \epsilon \|_{L^{2}}^{2}).
\endaligned
\end{equation}
Since $E[Q] = 0$,
\begin{equation}\label{4.8}
\frac{1}{2} \| (\partial_{r} - \frac{m + A_{\theta}[Q]}{r}) Q \|_{L^{2}}^{2} + \frac{1 - g}{4} \| Q \|_{L^{4}}^{4} = 0.
\end{equation}
Next, integrating by parts,
\begin{equation}\label{4.9}
\aligned
\langle (\partial_{r} - \frac{m + A_{\theta}[Q]}{r}) Q, (\frac{Re \int_{0}^{r} Q \bar{\epsilon} s ds}{r}) Q + (\partial_{r} - \frac{m + A_{\theta}[Q]}{r}) \epsilon \rangle \\ = \langle -\partial_{rr} Q - \frac{1}{r} \partial_{r} Q + \partial_{r}(A_{\theta}[Q]) \frac{Q}{r} + (\frac{m + A_{\theta}[Q]}{r})^{2} Q - \frac{1}{2} Q^{3}, \epsilon \rangle - \int \frac{m + A_{\theta}[Q]}{r^{2}} Q^{2} Re(\int_{0}^{r} Q \bar{\epsilon} s ds) r dr \\
= \langle -\partial_{rr} Q - \frac{1}{r} \partial_{r} Q - Q^{3} + (\frac{m + A_{\theta}[Q]}{r})^{2} Q + A_{0}[Q] Q, \epsilon \rangle.
\endaligned
\end{equation}
Since
\begin{equation}\label{4.10}
(\partial_{r}^{2} + \frac{1}{r} \partial_{r} - \alpha - (\frac{m + A_{\theta}[Q]}{r})^{2} - A_{0}[Q] - g Q^{2})Q = 0,
\end{equation}
\begin{equation}\label{4.11}
\langle  -\partial_{rr} Q - \frac{1}{r} \partial_{r} Q - Q^{3} + (\frac{m + A_{\theta}[Q]}{r})^{2} Q + A_{0}[Q] Q, \epsilon \rangle + (1 - g) \langle Q^{3}, \epsilon \rangle = -\alpha \langle Q, \epsilon \rangle = \frac{\alpha}{2} \| \epsilon \|_{L^{2}}^{2}.
\end{equation}
The last equality uses the fact that $\| Q + \epsilon \|_{L^{2}} = \| Q \|_{L^{2}}$.\medskip

Now let
\begin{equation}\label{4.12}
L_{Q} f = (\frac{Re \int_{0}^{r} Q \bar{f} s ds}{r}) Q + (\partial_{r} - \frac{m + A_{\theta}[Q]}{r}) f.
\end{equation}
We have proved
\begin{equation}\label{4.13}
E[Q + \epsilon] = \frac{\alpha}{2} \| \epsilon \|_{L^{2}}^{2} + \frac{1}{2} \| L_{Q} \epsilon \|_{L^{2}}^{2} + \frac{3}{2}(1 - g) \int Q^{2} \epsilon^{2} + O(\| \epsilon \|_{\dot{H}_{m}^{1}}^{2} \| \epsilon \|_{L^{2}}^{2} + \| \epsilon \|_{L^{2}}^{4} + \| \epsilon \|_{\dot{H}_{m}^{1}}^{1/2} \| \epsilon \|_{L^{2}}^{5/2}).
\end{equation}
Therefore, if
\begin{equation}\label{4.14}
\| L_{Q} \epsilon \|_{L^{2}}^{2} + 3(1 - g) \int Q^{2} \epsilon^{2} \gtrsim \| \epsilon \|_{\dot{H}_{m}^{1}}^{2},
\end{equation}
then
\begin{equation}\label{4.15}
E[Q + \epsilon] \gtrsim \| \epsilon \|_{L^{2}}^{2} + \| \epsilon \|_{\dot{H}_{m}^{1}}^{2}.
\end{equation}

\begin{claim}
We claim that there exists a rapidly decreasing negative eigenfunction $\psi$ of $(\ref{4.14})$ that satisfies
\begin{equation}\label{4.16}
\langle Q + x \cdot \nabla Q, \psi \rangle \neq 0.
\end{equation}
Furthermore, if $\epsilon \perp \psi$, then $(\ref{4.14})$ holds.
\end{claim}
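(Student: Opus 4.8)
The plan is to analyze the self-adjoint operator $\mathcal{A}$ on real $m$-equivariant functions in $L^{2}_{m}\cap\dot{H}^{1}_{m}$ associated with the quadratic form in $(\ref{4.14})$, so that $\langle\mathcal{A}\epsilon,\epsilon\rangle=\|L_{Q}\epsilon\|_{L^{2}}^{2}+3(1-g)\int Q^{2}\epsilon^{2}$, i.e. $\mathcal{A}=L_{Q}^{\ast}L_{Q}+3(1-g)Q^{2}$. By $(\ref{4.13})$ together with the standing wave equation $(\ref{1.10})$, $\mathcal{A}$ is the Hessian $E''[Q]$ on real perturbations, so $\mathcal{A}+\alpha$ is the Hessian at $Q$ of the action $E+\tfrac{\alpha}{2}M$. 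I would prove: (i) a description of $\sigma(\mathcal{A})$; (ii) $\mathcal{A}$ has a negative eigenvalue; (iii) it has \emph{exactly} one, with trivial kernel and no zero-energy resonance; and (iv) the two assertions of the claim follow.

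For (i): since $g>1$ forces $\alpha>0$ in $(\ref{1.10})$, $Q$ is smooth and exponentially decaying, and hence so are $A_{0}[Q]$, $A_{\theta}[Q]-A_{\theta}[Q](\infty)$, $Q^{2}$ and the kernel of the Hilbert--Schmidt operator $Pf=\tfrac{Q}{r}\int_{0}^{r}Qf\,s\,ds$. Writing $L_{Q}=D+P$ with $D=\partial_{r}-\tfrac{m+A_{\theta}[Q]}{r}$, a direct computation gives $D^{\ast}D=-\partial_{r}^{2}-\tfrac{1}{r}\partial_{r}+\tfrac{(m+A_{\theta}[Q])^{2}}{r^{2}}-\tfrac{1}{2}Q^{2}$, a nonnegative Bessel--Schr\"odinger operator with $\sigma_{\mathrm{ess}}(D^{\ast}D)=[0,\infty)$; since $D^{\ast}P+P^{\ast}D+P^{\ast}P$ and $3(1-g)Q^{2}$ are relatively compact with respect to $D^{\ast}D$, Weyl's theorem gives $\sigma_{\mathrm{ess}}(\mathcal{A})=[0,\infty)$, so $\mathcal{A}$ has finitely many negative eigenvalues and every eigenfunction with negative eigenvalue is smooth and (by Agmon) exponentially, hence rapidly, decreasing.

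For (ii) and (iii): differentiating $E[\lambda Q(\lambda\cdot)]=\lambda^{2}E[Q]=0$ twice at $\lambda=1$ and substituting $\epsilon=\lambda Q(\lambda\cdot)-Q=(\lambda-1)\Lambda Q+O((\lambda-1)^{2})$, $\Lambda Q:=Q+x\cdot\nabla Q$, into $(\ref{4.13})$ gives $\langle\mathcal{A}\Lambda Q,\Lambda Q\rangle=-\alpha\|\Lambda Q\|_{L^{2}}^{2}<0$, so $\lambda_{0}:=\inf\sigma(\mathcal{A})<0$; let $\psi$ be a corresponding ground state, rapidly decreasing by (i). For the upper bound in (iii) I would use the variational characterization of $Q$: by \cite{liu2016global}, $E[v]\ge0$ for all $m$-equivariant $v$ with $M[v]\le c_{m,g}$; feeding $\epsilon(t)=tv+O(t^{2})$ chosen so that $\|Q+\epsilon(t)\|_{L^{2}}=\|Q\|_{L^{2}}$ (for $v\perp Q$) into $(\ref{4.13})$ and letting $t\to0$ yields $\langle(\mathcal{A}+\alpha)v,v\rangle\ge0$ for all real $v\perp Q$, so $\mathcal{A}+\alpha$ has at most one negative eigenvalue. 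Differentiating $(\ref{1.10})$ along the scaling family (under which $\alpha\mapsto\lambda^{2}\alpha$) gives $(\mathcal{A}+\alpha)\Lambda Q=-2\alpha Q\ne0$; together with $\langle Q,\Lambda Q\rangle=0$ and $\langle(\mathcal{A}+\alpha)\Lambda Q,\Lambda Q\rangle=-\alpha\|\Lambda Q\|_{L^{2}}^{2}<0$ this forces $\lambda_{0}<-\alpha$ and that $\mathcal{A}+\alpha$ has \emph{exactly} one negative eigenvalue. It remains to rule out eigenvalues of $\mathcal{A}+\alpha$ in $[0,\alpha)$ and a threshold eigenvalue/resonance at $\alpha$ — equivalently, negative eigenvalues of $\mathcal{A}$ other than $\lambda_{0}$, and $0\in\sigma_{pp}(\mathcal{A})$ or a zero-energy resonance. \textbf{This is the step I expect to be the main obstacle.} For the mass-critical NLS it is the classical spectral theory of the linearized operator, carried out in \cite{dodson2021determination}, \cite{dodson2021determination2} by ODE/Sturm arguments using the sign properties of $Q$, $\partial_{r}Q$, $\Lambda Q$ and a comparison between adjacent equivariance sectors. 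Here $\mathcal{A}$ is \emph{nonlocal}, so Sturm oscillation theory does not apply verbatim; I would treat $D^{\ast}P+P^{\ast}D+P^{\ast}P$ as a compact perturbation, exploit the covariant-derivative structure $L_{Q}=D+P$ (the linearization at $Q$ of the first-order operator in $(\ref{1.8})$), and replace the oscillation count by a homotopy/continuity argument (deforming $g$, or a coupling constant multiplying the nonlocal terms) while tracking eigenvalue crossings, together with (i)--(ii) and the variational bound. This is precisely where the argument departs from \cite{dodson2021determination2}.

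Given (i)--(iii), (iv) is routine. If $\langle\Lambda Q,\psi\rangle=0$ then on $\operatorname{span}\{\psi,\Lambda Q\}$ the form in $(\ref{4.14})$ is diagonal with entries $\lambda_{0}\|\psi\|_{L^{2}}^{2}<0$ and $-\alpha\|\Lambda Q\|_{L^{2}}^{2}<0$, hence negative definite on a two-dimensional space, contradicting that $\mathcal{A}$ has only one negative eigenvalue; thus $\langle Q+x\cdot\nabla Q,\psi\rangle=\langle\Lambda Q,\psi\rangle\ne0$. Finally, by (iii) $\mathcal{A}$ is positive definite on $\{\psi\}^{\perp}$ with trivial kernel and no zero resonance; a standard weak-compactness argument (using $\sigma_{\mathrm{ess}}(\mathcal{A})=[0,\infty)$, the identity $\langle\mathcal{A}\epsilon,\epsilon\rangle=\|\epsilon\|_{\dot{H}^{1}_{m}}^{2}+\langle K\epsilon,\epsilon\rangle$ with $K=\mathcal{A}+\Delta_{m}$ relatively compact, and the exponential decay of $\psi$) upgrades this to $\langle\mathcal{A}\epsilon,\epsilon\rangle\gtrsim\|\epsilon\|_{\dot{H}^{1}_{m}}^{2}$ for all $\epsilon\perp\psi$ in $L^{2}_{m}\cap\dot{H}^{1}_{m}$, which is $(\ref{4.14})$.
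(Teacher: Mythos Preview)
The paper does \emph{not} give a proof of this Claim. It is immediately followed by a Remark which (a) invokes Lemma~2.1 of \cite{kim2022soliton} to get $\|L_{Q}f\|_{L^{2}}\sim\|f\|_{\dot{\mathcal H}_{m}^{1}}$ on a suitable orthogonal complement, (b) observes that the transversality condition there already gives $(\ref{4.16})$, and (c) concludes that for $g$ \emph{sufficiently close to $1$} the perturbation $3(1-g)Q^{2}$ is small enough that $(\ref{4.14})$ survives. For general $g>1$ the paper explicitly states the result as a \emph{conjecture}. So there is nothing in the paper to compare your argument to beyond this perturbative remark.

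Your proposal is therefore more ambitious than what the paper actually does. Your steps (i), (ii), and (iv) are sound and standard: Weyl's theorem for the essential spectrum, the test vector $\Lambda Q$ to exhibit a negative direction, and the two-dimensional negative-subspace contradiction for $(\ref{4.16})$ are all correct. In (iii), the variational input from \cite{liu2016global} cleanly gives that $\mathcal A+\alpha$ has at most one negative eigenvalue, but as you yourself flag, this does \emph{not} control eigenvalues of $\mathcal A$ in $[-\alpha,0]$, which is precisely what is needed for $(\ref{4.14})$. Your proposed homotopy/eigenvalue-tracking argument to close this gap is reasonable in spirit but is only a sketch; you would need to control eigenvalue crossings at $0$ uniformly along the deformation, and the nonlocal piece $D^{\ast}P+P^{\ast}D+P^{\ast}P$ makes the usual Sturm/nodal arguments unavailable, as you note. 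This is exactly the obstruction the paper sidesteps by restricting to $g$ close to $1$.

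In short: your outline is correct where it is complete, and you have correctly located the one genuine gap --- a gap the paper does not close either, but instead avoids by a perturbative argument off the self-dual case and a conjecture for the general case.
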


\begin{remark}
Using Lemma $2.1$ of \cite{kim2022soliton},
\begin{equation}
\| L_{Q} f \|_{L^{2}} \sim \| f \|_{\dot{\mathcal H}_{m}^{1}},
\end{equation}
for $f$ orthogonal to $\mathcal Z_{1}$ and $\mathcal Z_{2}$ satisfying the transversality condition. Then for $g$ sufficiently close to $1$, $(\ref{4.14})$ holds. The transversality condition from \cite{kim2022soliton} is that
\begin{equation}
det \begin{pmatrix}
(\Lambda Q, \mathcal Z_{1}) & (iQ, \mathcal Z_{1}) \\
(\Lambda Q, \mathcal Z_{2}) & (iQ, \mathcal Z_{2})
\end{pmatrix} \neq 0, \qquad \Lambda = 1 + x \cdot \nabla,
\end{equation}
so $(\ref{4.16})$ certainly holds. We conjecture that $(\ref{4.14})$ holds for any $g > 1$.
\end{remark}

If the above claim is true, then by the implicit function theorem we can prove the following.
\begin{theorem}\label{t4.1}
Suppose there exists some $\lambda_{0}(t) > 0$, $\gamma_{0}(t) \in \mathbb{R}$ such that
\begin{equation}\label{4.17}
\| e^{i \gamma_{0}(t)} \lambda_{0}(t) u(t, \lambda_{0}(t) x) - Q(x) \|_{L^{2}} \leq \eta_{0}.
\end{equation}
Suppose without loss of generality that $\lambda_{0}(t) = 1$ and $\gamma_{0}(t) = 0$. Then there exists $\gamma(t) \in \mathbb{R}$, $\lambda(t) > 0$ such that
\begin{equation}\label{4.17}
\| e^{i \gamma(t)} \lambda(t) u(t, \lambda(t) x) - Q(x) \|_{L^{2}} \leq 2\eta_{0},
\end{equation}
\begin{equation}\label{4.18}
\langle e^{i \gamma(t)} \lambda(t) u(t, \lambda(t) x) - Q(x), \psi \rangle = \langle e^{i \gamma(t)} \lambda(t) u(t, \lambda(t) x) - Q(x), i \psi \rangle = 0,
\end{equation}
and
\begin{equation}\label{4.19}
|\gamma(t)| + |\lambda(t) - 1| \lesssim \| \epsilon \|_{L^{2}} + \| \epsilon \|_{L^{2}} \| \epsilon \|_{L^{4}}^{2}.
\end{equation}

\end{theorem}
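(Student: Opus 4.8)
The plan is to obtain Theorem~\ref{t4.1} from the quantitative implicit function theorem applied to the two scalar modulation equations in $(\ref{4.18})$, with invertibility of the relevant Jacobian supplied by the preceding Claim. Before doing so I would normalize the reference modulation: among all $(\gamma_0,\lambda_0)$ with $\|e^{i\gamma_0}\lambda_0 u(t,\lambda_0\cdot)-Q\|_{L^2}\le\eta_0$ one may choose the pair realizing $\inf_{\gamma,\lambda}\|e^{i\gamma}\lambda u(t,\lambda\cdot)-Q\|_{L^2}$ (this infimum is at most $\eta_0$ and, by a standard compactness argument using $\|u(t)\|_{L^2}=\|Q\|_{L^2}$, is attained), and then take $(\gamma_0,\lambda_0)=(0,1)$ as in the statement. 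Then $\tilde\epsilon:=u(t)-Q$ satisfies $\|\tilde\epsilon\|_{L^2}=\inf_{\gamma,\lambda}\|e^{i\gamma}\lambda u(t,\lambda\cdot)-Q\|_{L^2}\le\eta_0$; in particular every modulation of $u(t)$ is at $L^2$-distance at least $\|\tilde\epsilon\|_{L^2}$ from $Q$.

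Define $\Phi=(\Phi_1,\Phi_2):\{|\gamma|+|\lambda-1|\le\tfrac12\}\to\mathbb{R}^2$ by
\begin{equation*}
\Phi_1(\gamma,\lambda)=\langle e^{i\gamma}\lambda u(t,\lambda\cdot)-Q,\psi\rangle,\qquad \Phi_2(\gamma,\lambda)=\langle e^{i\gamma}\lambda u(t,\lambda\cdot)-Q,i\psi\rangle,
\end{equation*}
with $\langle f,g\rangle=\mathrm{Re}\int f\bar g$, so that solving $\Phi=0$ near $(0,1)$ yields $(\ref{4.18})$. The point that makes this work even though $u(t)$ is only $L^2$ is that the symmetry action can be transferred onto the smooth, rapidly decreasing function $\psi$: after a change of variables,
\begin{equation*}
\langle e^{i\gamma}\lambda u(t,\lambda\cdot),\psi\rangle=\mathrm{Re}\Big[e^{i\gamma}\lambda^{-1}\!\int u(t,y)\,\psi(y/\lambda)\,dy\Big],
\end{equation*}
and likewise, after integrating by parts, the $\lambda$-derivatives fall on $\psi$ and its rapidly decreasing derivatives. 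Hence $\Phi\in C^2$ on $\{|\gamma|+|\lambda-1|\le\tfrac12\}$, with $\Phi,D\Phi,D^2\Phi$ bounded uniformly in terms of $\|Q\|_{L^2}$ and finitely many seminorms of $\psi$.

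Next I would evaluate at the base point. Since $\psi\in L^2$, $|\Phi(0,1)|=|(\langle\tilde\epsilon,\psi\rangle,\langle\tilde\epsilon,i\psi\rangle)|\lesssim\|\tilde\epsilon\|_{L^2}\le\eta_0$. For the Jacobian, $\partial_\gamma(e^{i\gamma}\lambda u(t,\lambda\cdot))|_{(0,1)}=iu(t)$ and $\partial_\lambda(e^{i\gamma}\lambda u(t,\lambda\cdot))|_{(0,1)}=\Lambda u(t)$ with $\Lambda=1+x\cdot\nabla$; replacing $u(t)$ by $Q$ costs $O(\|\tilde\epsilon\|_{L^2})$ (for the $\lambda$-entry after moving $x\cdot\nabla$ onto $\psi$). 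Using that $Q,\Lambda Q,\psi$ are real, the model Jacobian is anti-diagonal,
\begin{equation*}
J_0=\begin{pmatrix}\langle iQ,\psi\rangle & \langle\Lambda Q,\psi\rangle\\ \langle iQ,i\psi\rangle & \langle\Lambda Q,i\psi\rangle\end{pmatrix}=\begin{pmatrix}0 & \langle\Lambda Q,\psi\rangle\\ \langle Q,\psi\rangle & 0\end{pmatrix},\qquad \det J_0=-\langle Q,\psi\rangle\,\langle\Lambda Q,\psi\rangle.
\end{equation*}
By the Claim, $(\ref{4.16})$ is precisely $\langle\Lambda Q,\psi\rangle=\langle Q+x\cdot\nabla Q,\psi\rangle\neq0$, and $\langle Q,\psi\rangle\neq0$ since $Q>0$ and $\psi$ may be taken to be the (positive) ground state of the operator in $(\ref{4.14})$. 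Thus $J_0$ is invertible and, for $\eta_0$ small, $D\Phi(0,1)$ is invertible with $\|D\Phi(0,1)^{-1}\|\le 2\|J_0^{-1}\|=:A$.

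With $A$ and the uniform bound on $D^2\Phi$ fixed (neither depends on $\eta_0$), I would then run the quantitative implicit function theorem: choosing $\delta$ small so that $\|D\Phi(\gamma,\lambda)-D\Phi(0,1)\|\le\tfrac1{2A}$ on $|\gamma|+|\lambda-1|\le\delta$, and then $\eta_0$ small so that $|\Phi(0,1)|\le\tfrac{\delta}{2A}$, there is a unique $(\gamma(t),\lambda(t))$ in that ball with $\Phi(\gamma(t),\lambda(t))=0$ and $|\gamma(t)|+|\lambda(t)-1|\le 2A|\Phi(0,1)|\lesssim\|\tilde\epsilon\|_{L^2}$; this gives $(\ref{4.18})$. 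Setting $\epsilon(t):=e^{i\gamma(t)}\lambda(t)u(t,\lambda(t)\cdot)-Q$, smoothness and rapid decay of $Q$ give $\|\epsilon(t)\|_{L^2}\le\|e^{i\gamma(t)}\lambda(t)Q(\lambda(t)\cdot)-Q\|_{L^2}+\|\tilde\epsilon\|_{L^2}\lesssim\|\tilde\epsilon\|_{L^2}\le\eta_0$, which is $\le2\eta_0$ for $\eta_0$ small; and the normalization of the first paragraph gives $\|\tilde\epsilon\|_{L^2}\le\|\epsilon(t)\|_{L^2}$, so $|\gamma(t)|+|\lambda(t)-1|\lesssim\|\epsilon(t)\|_{L^2}$, which a fortiori yields $(\ref{4.19})$ (the extra summand $\|\epsilon\|_{L^2}\|\epsilon\|_{L^4}^2$ is what one gets instead if one retains the quadratic remainders in the expansion of $\Phi$, estimated via Lemma~\ref{l3.4} and $\|\epsilon\|_{L^4}^2\lesssim\|\epsilon\|_{L^2}\|\epsilon\|_{\dot{H}_m^1}$, rather than using the minimizing normalization). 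Continuity of $t\mapsto(\gamma(t),\lambda(t))$ follows from the local uniqueness in the implicit function theorem together with $L^2$-continuity of $t\mapsto u(t)$. The only genuinely delicate points are the invertibility of $J_0$ --- non-degeneracy of $\psi$ against the scaling generator $\Lambda Q$, which is exactly $(\ref{4.16})$, and against the phase direction $Q$ --- and the low regularity of $u(t)$, which is circumvented by keeping every dilation and derivative on the smooth, rapidly decreasing functions $\psi$ and $Q$; everything else is routine.
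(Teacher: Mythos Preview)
Your proposal is correct and is precisely the approach the paper indicates: the paper gives no detailed proof of Theorem~\ref{t4.1}, stating only that it follows from the Claim ``by the implicit function theorem,'' and you have fleshed this out accurately, including the key point that the low regularity of $u(t)\in L^{2}$ is handled by transferring all $\lambda$-derivatives onto the rapidly decreasing test function $\psi$. One small remark: invertibility of your Jacobian $J_{0}$ requires both $\langle\Lambda Q,\psi\rangle\neq0$ (which is exactly the Claim's $(\ref{4.16})$) and $\langle Q,\psi\rangle\neq0$; the latter is not stated explicitly in the Claim, and your justification via positivity of the ground state of $L_{Q}^{\ast}L_{Q}+3(1-g)Q^{2}$ is the natural one but tacitly invokes a Perron--Frobenius type argument that the paper, like the Claim itself, leaves unproved.
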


\section{Long time Strichartz estimates}
\subsection{Estimates for the mass-critical NLS}
As a warm-up, we prove an estimate for a mass-critical NLS.
\begin{proposition}\label{p5.1}
Suppose $u$ is a solution to the mass-critical nonlinear Schr{\"o}dinger equation,
\begin{equation}\label{5.1}
i u_{t} + \Delta u = -|u|^{2} u, \qquad u(0, x) = u_{0}, \qquad \| u_{0} \|_{L^{2}} = \| Q \|_{L^{2}}.
\end{equation}
Furthermore, suppose that for some interval $[a, b]$ with $a > 0$,
\begin{equation}\label{5.2}
\int_{a}^{b} \lambda(t)^{-2} dt = T, \qquad T > \eta_{\ast}^{-1},
\end{equation}
\begin{equation}\label{5.3}
\sup_{t > 0} dist(u(t), \mathcal M) \leq \eta_{\ast},
\end{equation}
and that $u$ is equivariant of order $m$. Finally, suppose that for all $t \in [a, b]$,
\begin{equation}\label{5.4}
1 \leq \lambda(t) \leq T^{1/100}.
\end{equation}
Then,
\begin{equation}\label{5.5}
\int_{a}^{b} \| \epsilon(t) \|_{L^{2}}^{2} \lambda(t)^{-2} dt \leq 3(\epsilon_{2}(a), Q + x \cdot \nabla Q)_{L^{2}} - (3 \epsilon_{2}(b), Q + x \cdot \nabla Q)_{L^{2}} + O(T^{-8}).
\end{equation} 
\end{proposition}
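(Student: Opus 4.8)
The plan is to run the modulation-plus-virial argument standard for minimal-mass solutions of the mass-critical NLS (as in \cite{dodson2021determination2}, \cite{dodson2021determination}), arranged so as to produce the explicit gain $O(T^{-8})$. Since $u(t)$ stays within $\eta_\ast$ of $\mathcal M$, the modulation lemma (the NLS analogue of Theorem $\ref{t4.1}$) gives
\[
u(t,x)=\frac{e^{i\gamma(t)}}{\lambda(t)}(Q+\epsilon)\!\Big(s,\tfrac{x}{\lambda(t)}\Big),\qquad \frac{ds}{dt}=\frac{1}{\lambda(t)^2},\qquad \epsilon=\epsilon_1+i\epsilon_2,
\]
with $\epsilon$ subject to the two orthogonality conditions fixing $(\gamma,\lambda)$, so that $s$ ranges over an interval of length $\int_a^b\lambda^{-2}\,dt=T$. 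Substituting into $(\ref{5.1})$ gives, with $\Lambda=1+y\cdot\nabla$ and $L_\pm$ the linearized operators at $Q$,
\[
\partial_s\epsilon_2=-L_+\epsilon_1+\tfrac{\lambda_s}{\lambda}\Lambda\epsilon_2-\tilde\gamma_s(Q+\epsilon_1)+\operatorname{Re}N(\epsilon),\qquad \tilde\gamma_s:=\gamma_s-1,
\]
together with the companion equation for $\epsilon_1$ (which contains the term $\tfrac{\lambda_s}{\lambda}\Lambda(Q+\epsilon_1)$), where $N$ is at least quadratic; differentiating the orthogonality conditions and inverting a nondegenerate $2\times2$ system yields $|\lambda_s/\lambda|+|\tilde\gamma_s|\lesssim\|\epsilon\|_{L^2}$. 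Because $\|u_0\|_{L^2}=\|Q\|_{L^2}$, $u$ is a minimal-mass solution, so the additional-regularity argument gives a uniform bound $\|\epsilon(t)\|_{H^2}\lesssim1$, hence $\|\epsilon\|_{\dot H^1}\lesssim\|\epsilon\|_{L^2}^{1/2}$ by interpolation; the same regularity is what makes the long-time Strichartz estimates of this section available on $[a,b]$.

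Next, put $\mathcal I(s):=(\epsilon_2(s),\Lambda Q)_{L^2}$. Because the mass-critical ground state $Q$ is smooth and exponentially decaying, no spatial truncation is needed (this is the simplification over the Chern--Simons case, where $Q$ decays only polynomially). Differentiating, using $\Lambda^\ast=-\Lambda$ on $L^2(\mathbb R^2)$ to move the derivative off $\epsilon_2$ and $(Q,\Lambda Q)=0$ (in two dimensions) to kill the linear-order part of the $\tilde\gamma_s$ term,
\[
\frac{d}{ds}\mathcal I=(-L_+\epsilon_1,\Lambda Q)-\tfrac{\lambda_s}{\lambda}(\epsilon_2,\Lambda^2Q)-\tilde\gamma_s(\epsilon_1,\Lambda Q)+(\operatorname{Re}N,\Lambda Q).
\]
The main term is an exact identity: from $L_+\Lambda Q=-2Q$ and the mass constraint $\|Q+\epsilon\|_{L^2}=\|Q\|_{L^2}$, i.e.\ $2(\epsilon_1,Q)=-\|\epsilon\|_{L^2}^2$, one gets $(-L_+\epsilon_1,\Lambda Q)=2(\epsilon_1,Q)=-\|\epsilon\|_{L^2}^2$. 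It is essential that $\mathcal I$ uses the imaginary part: with $\epsilon_1$ in place of $\epsilon_2$, the term $\tfrac{\lambda_s}{\lambda}\Lambda(Q+\epsilon_1)$ would contribute $\tfrac{\lambda_s}{\lambda}(\Lambda Q,\Lambda Q)$ with $(\Lambda Q,\Lambda Q)\neq0$ — only $O(\|\epsilon\|_{L^2})$, destroying monotonicity — whereas with $\epsilon_2$ every modulation contribution is genuinely quadratic.

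It remains to absorb the remaining terms. The cubic part of $(\operatorname{Re}N,\Lambda Q)$ is $\lesssim\|\epsilon\|_{L^2}^2\|\epsilon\|_{\dot H^1}\lesssim\eta_\ast^{1/2}\|\epsilon\|_{L^2}^2$; the quadratic part of $(\operatorname{Re}N,\Lambda Q)$ and the two modulation corrections are $O(\|\epsilon\|_{L^2}^2)$ but with fixed constants, so one must either exhibit a cancellation or add a small correction to $\mathcal I$ — built from frequency-localized pieces of $\epsilon$ that, by the long-time Strichartz estimates, are $\lesssim T^{-N}$ for $N$ large — whose derivative cancels these terms up to a residual $\delta(s)$ with $\int_a^b|\delta|\,\lambda^{-2}\,dt=O(T^{-8})$. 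This is precisely where the hypothesis $1\le\lambda\le T^{1/100}$ (together with $T>\eta_\ast^{-1}$) enters, so that a single frequency cutoff works uniformly over $[a,b]$; the exponent $8$ is not optimal. With this in hand $\tfrac{d}{ds}\mathcal I(s)\le-\tfrac13\|\epsilon(s)\|_{L^2}^2+\delta(s)$, and integrating over the $s$-interval of length $T$, using $\lambda^{-2}\,dt=ds$ and multiplying by $3$, gives $(\ref{5.5})$ with $\epsilon_2=\operatorname{Im}\epsilon$.

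The main obstacle is this last step. The algebra in the first two parts is routine — and is exactly the prototype for the harder, non-local Chern--Simons computation in the following sections — but the pointwise bound $\|\epsilon(t)\|_{L^2}\le\eta_\ast$ is far too crude to produce a quantitative gain $O(T^{-8})$ over a renormalized interval of length $T$. One genuinely needs the long-time Strichartz machinery of this section (exploiting the smoothness of the minimal-mass solution and the slow variation of $\lambda$) to show the offending high-frequency portion of $\epsilon$ is negligible in spacetime, together with the careful quadratic bookkeeping that turns the clean identity $\tfrac{d}{ds}\mathcal I=-\|\epsilon\|_{L^2}^2+\cdots$ into the usable differential inequality $\tfrac{d}{ds}\mathcal I\le-\tfrac13\|\epsilon\|_{L^2}^2+\delta$.
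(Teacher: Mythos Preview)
Your core algebraic identity is correct and is what underlies the paper's argument: differentiating $(\epsilon_2,\Lambda Q)$ produces $-\|\epsilon\|_{L^2}^2$ to leading order, via $L_+\Lambda Q=-2Q$ and the mass constraint. But there is a genuine gap in how you handle the remaining terms. The modulation contributions $\tfrac{\lambda_s}{\lambda}(\epsilon_2,\Lambda^2 Q)$ and $\tilde\gamma_s(\epsilon_1,\Lambda Q)$ are $O(\|\epsilon\|_{L^2}^2)$ with $O(1)$ constants, and your proposed fix --- adding a correction built from ``frequency-localized pieces of $\epsilon$ that are $\lesssim T^{-N}$'' --- cannot cancel them: these terms live at the soliton scale and have nothing to do with high frequency. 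No amount of long-time Strichartz control on $P_{>N}u$ touches them.

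The paper avoids this problem by working in the original variables with a truncated virial on the full solution,
\[
M(t)=\int \phi(x)\,\tfrac{x}{|x|}\cdot\mathrm{Im}\big[\overline{P_{\le N}u}\,\nabla P_{\le N}u\big],\qquad N=T^{1/3},\ R=T^{1/25},
\]
so that modulation parameters never enter the time derivative: $\tfrac{d}{dt}M$ is, up to truncation errors, $4E[\chi(\cdot/R)P_{\le N}u]$, and the energy decomposition of Section~4 gives the lower bound $\gtrsim\lambda^{-2}\|\epsilon\|_{L^2}^2$. (Thus a spatial cutoff \emph{is} used, contrary to your remark --- the virial weight $|x|$ is unbounded.) The boundary term $M|_a^b$ splits as $2(\epsilon_2,\Lambda Q)$ plus a quadratic remainder $\int\phi\,\tfrac{x}{|x|}\cdot\mathrm{Im}[\overline{P_{\le N}\tilde\epsilon}\,\nabla P_{\le N}\tilde\epsilon]$, and this remainder is what requires the machinery: it is controlled not by long-time Strichartz directly but by Proposition~\ref{p5.3} (almost conservation of $E(P_{\le N}u)$), which bounds it by $\tfrac{RN^2T^{1/100}}{T}\int\lambda^{-2}\|\epsilon\|_{L^2}^2$ and hence can be absorbed into the left side. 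Proposition~\ref{p5.2} is used to prove Proposition~\ref{p5.3} and to handle the commutator errors from the frequency cutoff in $\tfrac{d}{dt}M$, not to kill a soliton-scale quadratic error as you suggest. The ``correction to $\mathcal I$'' you are looking for is precisely this quadratic $\epsilon$--$\epsilon$ virial piece, and the tool that controls it at the endpoints is almost-conservation of energy, not high-frequency smallness.
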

\begin{proof}
It suffices to prove the proposition under the bootstrap assumption,
\begin{equation}\label{5.6}
\int_{a}^{b} \| \epsilon(t) \|_{L^{2}}^{2} \lambda(t)^{-2} dt \leq \eta_{\ast}^{1/2}.
\end{equation}
Indeed, since $\| \epsilon(t) \|_{L^{2}}^{2} \lesssim \eta_{\ast}^{2}$ for all $t \in [a, b]$,
\begin{equation}\label{5.7}
\int_{a}^{b'} \| \epsilon(t) \|_{L^{2}}^{2} dt \leq \eta_{\ast}^{1/2},
\end{equation}
where
\begin{equation}\label{5.8}
\int_{a}^{b'} \lambda(t)^{-2} dt \sim \epsilon_{\ast}^{-3/2} \gg \epsilon_{\ast}^{-1}.
\end{equation}
If $b' \geq b$, then the proof is complete. If $b' < b$, then since $(\ref{5.2})$--$(\ref{5.4})$ hold on $[a, b']$, $(\ref{5.5})$ implies
\begin{equation}\label{5.9}
\int_{a}^{b'} \| \epsilon(t) \|_{L^{2}}^{2} \lambda(t)^{-2} dt \lesssim \eta_{\ast} \ll \eta_{\ast}^{1/2}.
\end{equation}
By standard bootstrap arguments, the proof of Proposition $\ref{p5.1}$ would then be complete.\medskip

The proof of Proposition $\ref{p5.1}$ depends on two propositions: the long time Strichartz estimate and  the almost conservation of energy.
\begin{proposition}[Long time Strichartz estimates]\label{p5.2}
Under the conditions of Proposition $\ref{p5.1}$ and $(\ref{5.6})$, if $N = T^{1/3}$,
\begin{equation}\label{5.10}
\| P_{> N} u \|_{U_{\Delta}^{2}([a, b] \times \mathbb{R}^{2})}^{2} \lesssim \frac{1}{T^{10}} + \frac{1}{T} \int_{a}^{b} \| \epsilon(t) \|_{L^{2}}^{2} \lambda(t)^{-2} dt.
\end{equation}
\end{proposition}

\begin{proposition}[Almost conservation of energy]\label{p5.3}
Under the conditions of Proposition $\ref{p5.1}$ and $(\ref{5.6})$, if $N = T^{1/3}$,
\begin{equation}\label{5.11}
\sup_{t \in [a, b]} E(P_{\leq N} u)(t) \lesssim \frac{1}{T^{10}} + \frac{N^{2}}{T} \int_{a}^{b} \| \epsilon(t) \|_{L^{2}}^{2} \lambda(t)^{-2} dt.
\end{equation}
\end{proposition}
Let $\chi \in C_{0}^{\infty}(\mathbb{R}^{2})$ be a radially symmetric function, $\chi = 1$ for $r \leq 1$, $\chi$ supported on $r \leq 2$, $\chi(r)$ is decreasing as a function of $r$, and let
\begin{equation}\label{5.12}
\phi(r) = \int_{0}^{r} \chi^{2}(\frac{s}{R}) ds,
\end{equation}
and let
\begin{equation}\label{5.12}
M(t) = \int \phi(x) \frac{x}{|x|} \cdot Im[\overline{P_{\leq N} u} \nabla P_{\leq N} u](t, x) dx.
\end{equation}
By direct computation,
\begin{equation}\label{5.13}
\aligned
\frac{d}{dt} \int \phi(x) \frac{x}{|x|} \cdot Im[\bar{u} \nabla u](t,x) dx = 2 \int \chi^{2}(\frac{x}{R}) |\nabla u(t,x)|^{2} dx - \int \chi^{2}(\frac{x}{R}) |u(t,x)|^{4} dx \\
- \frac{1}{2} \int [\frac{1}{|x|} \phi(x) - \chi^{2}(\frac{x}{R})] |u(t,x)|^{4} dx + 2 \int [\frac{1}{|x|} \phi(x) - \chi^{2}(\frac{x}{R})] \frac{x_{j} x_{k}}{|x|^{2}} Re(\overline{\partial_{j} u} \partial_{k} u)(t,x) dx \\
+ O(\frac{1}{R^{2}} \int_{|x| > R} |u(t,x)|^{2} dx).
\endaligned
\end{equation}
\begin{remark}
The last estimate follows from the fact that $\frac{1}{|x|} \phi(x) = 1$ for $|x| \leq R$.
\end{remark}
For $R \gg \lambda(t)$, say $R = T^{1/25}$, since $Q$ is rapidly decreasing,
\begin{equation}\label{5.14}
\int_{|x| > R} \frac{1}{R^{2}} |u(t,x)|^{2} dx \lesssim \frac{1}{R^{2}} \| \epsilon(t) \|_{L^{2}}^{2} + \frac{1}{R^{2} T^{10}} \ll \frac{1}{\lambda(t)^{2}} \| \epsilon(t) \|_{L^{2}}^{2} + \frac{1}{\lambda(t)^{2} T^{10}}.
\end{equation}
Next,
\begin{equation}\label{5.15}
2 \int [\frac{1}{|x|} \phi(x) - \chi^{2}(\frac{x}{R})] \frac{x_{j} x_{k}}{|x|^{2}} Re(\overline{\partial_{j} u} \partial_{k} u)(t,x) dx \geq 0.
\end{equation}
Next, since $Q$ is rapidly decreasing,
\begin{equation}\label{5.16}
- \frac{1}{2} \int [\frac{1}{|x|} \phi(x) - \chi^{2}(\frac{x}{R})] |u(t,x)|^{4} dx \lesssim \frac{1}{\lambda(t)^{2} T^{10}} + \| \epsilon(t) \|_{L^{4}}^{4}.
\end{equation}
Finally,
\begin{equation}\label{5.17}
\aligned
\int \chi^{2}(\frac{x}{R}) |u(t,x)|^{4} dx = \int \chi^{4}(\frac{x}{R}) |u(t,x)|^{4} dx + \int \chi^{2}(\frac{x}{R}) |u(t,x)|^{4} - \int \chi^{4}(\frac{x}{R}) |u(t,x)|^{4} \\ = \int \chi^{4}(\frac{x}{R}) |u(t,x)|^{4} dx + O(\frac{1}{\lambda(t)^{2}} \frac{1}{T^{10}}) + O(\| \epsilon(t) \|_{L^{4}}^{4}).
\endaligned
\end{equation}
Therefore,
\begin{equation}\label{5.18}
\int_{a}^{b} E[\chi(\frac{x}{R}) u(t, x)] dt \leq \int \phi(x) \frac{x}{|x|} \cdot Im[\bar{u} \nabla u](t,x)|_{a}^{b} + o(\int_{a}^{b} \frac{1}{\lambda(t)^{2}} \| \epsilon(t) \|_{L^{2}}^{2} dt) + \int_{a}^{b} \frac{1}{\lambda(t)^{2}} \frac{1}{T^{10}} dt.
\end{equation}
The last estimate uses the Strichartz estimate
\begin{equation}\label{5.19}
\int_{a}^{b} \| \epsilon(t) \|_{L^{4}}^{4} dt \ll \int_{a}^{b} \frac{1}{\lambda(t)^{2}} \| \epsilon(t) \|_{L^{2}}^{2} dt.
\end{equation}
Using the energy lower bound, $E[\chi(\frac{x}{R}) u] \geq \frac{1}{2 \lambda(t)^{2}} \| \epsilon(t) \|_{L^{2}}^{2} - O(\frac{1}{\lambda(t)^{2} T^{10}})$, then
\begin{equation}\label{5.20}
\frac{1}{4} \int_{a}^{b} \frac{1}{\lambda(t)^{2}} \| \epsilon(t) \|_{L^{2}}^{2} dt \leq \int \phi(x) \frac{x}{|x|} \cdot Im[\bar{u} \nabla u](t,x)|_{a}^{b} + O(\frac{1}{T^{9}}).
\end{equation}
Replacing $u$ by $P_{\leq N} u$ and ignoring the error terms arising from the frequency truncation, (which are handled by Proposition $\ref{p5.2}$),
\begin{equation}\label{5.21}
\aligned
\int \phi(x) \frac{x}{|x|} \cdot Im[\overline{P_{\leq N} u} \nabla P_{\leq N} u](t,x) dx = 2 \int Im[\epsilon(t,x) (Q(x) + x \cdot \nabla Q(x))] dx \\ + \int \phi(x) \frac{x}{|x|} \cdot Im[\overline{P_{\leq N} \frac{1}{\lambda(t)} \epsilon(t, \frac{x}{\lambda(t)})} \nabla P_{\leq N} \frac{1}{\lambda(t)} \epsilon(t, \frac{x}{\lambda(t)})] dx.
\endaligned
\end{equation}
Now then, using $(\ref{5.4})$ and Proposition $\ref{p5.3}$,
\begin{equation}\label{5.22}
\aligned
\int \phi(x) \frac{x}{|x|} \cdot Im[\overline{P_{\leq N} \frac{1}{\lambda(t)} \epsilon(t, \frac{x}{\lambda(t)})} \nabla P_{\leq N} \frac{1}{\lambda(t)} \epsilon(t, \frac{x}{\lambda(t)})] dx \\ \lesssim R N \| \epsilon(t) \|_{L^{2}} \| \nabla P_{\leq N} \epsilon(t) \|_{L^{2}} \lesssim R T^{1/100} E[P_{\leq N} u(t)] \lesssim \frac{1}{T^{9}} + \frac{N^{2} R T^{1/100}}{T} \int_{a}^{b} \frac{1}{\lambda(t)^{2}} \| \epsilon(t) \|_{L^{2}}^{2} dt.
\endaligned
\end{equation}
Absorbing the second term on the right hand side of $(\ref{5.22})$ into the left hand side of $(\ref{5.20})$ proves Proposition $\ref{p5.1}$, if Propositions $\ref{p5.2}$ and $\ref{p5.3}$ hold.
\end{proof}

\begin{proof}[Proof of Proposition $\ref{p5.3}$]
By the intermediate value theorem, there exists $t_{0} \in [a, b]$ such that
\begin{equation}\label{5.23}
\| \epsilon(t_{0}) \|_{L^{2}}^{2} \leq \frac{1}{T} \int_{a}^{b} \frac{1}{\lambda(t)^{2}} \| \epsilon(t) \|_{L^{2}}^{2} dt.
\end{equation}
By the Sobolev embedding theorem,
\begin{equation}\label{5.24}
E(P_{\leq N} u(t_{0})) \lesssim \frac{N^{2}}{T} \int_{a}^{b} \frac{1}{\lambda(t)^{2}} \| \epsilon(t) \|_{L^{2}}^{2} dt.
\end{equation}
Computing the change of energy,
\begin{equation}\label{5.25}
\frac{d}{dt} E(P_{\leq N} u(t)) = (-P_{> N}(|u|^{2} u) + [|u|^{2} u - |P_{\leq N} u|^{2} (P_{\leq N} u)], P_{\leq N} u_{t})_{L^{2}}.
\end{equation}
By Proposition $\ref{p5.2}$ and the properties of the Littlewood--Paley projection operator,
\begin{equation}\label{5.26}
\aligned
\int_{a}^{b} |(-P_{> N} (|u|^{2} u), P_{\leq N} u_{t})_{L^{2}}| dt \lesssim N^{2} \| P_{> N} (|u|^{2} u) \|_{L_{t}^{2} L_{x}^{1}} \| P_{\frac{N}{2} < \cdot \leq N} u \|_{L_{t}^{2} L_{x}^{\infty}} \\ + \| P_{> N} (|u|^{2} u) \|_{L_{t}^{2} L_{x}^{1}} \| P_{> \frac{N}{8}} u \|_{L_{t}^{2} L_{x}^{\infty}} \| P_{\leq N} u \|_{L_{t,x}^{\infty}}^{2} + \| P_{N < \cdot < 2N} (|u|^{2} u) \|_{L_{t,x}^{2}} \| P_{\frac{N}{2} < \cdot N} (|P_{> \frac{N}{8}} u|^{2} (P_{> \frac{N}{8}} u)) \|_{L_{t,x}^{2}}  \\ \lesssim \frac{N^{2}}{T} + \frac{N^{2}}{T} \int_{a}^{b} \| \epsilon(t) \|_{L^{2}}^{2} \frac{1}{\lambda(t)^{2}} dt.
\endaligned
\end{equation}
Now decompose
\begin{equation}\label{5.27}
|u|^{2} u - |P_{\leq N} u|^{2} (P_{\leq N} u) = (P_{> N} u)^{3} + 3 (P_{> N} u)^{2} (P_{\leq N} u) + 3 (P_{> N} u)(P_{\leq N} u)^{2}.
\end{equation}
Now then,
\begin{equation}\label{5.28}
\int |((P_{> N} u)^{2} u, \Delta P_{\leq N} u)_{L^{2}}| dt \lesssim N^{2} \| P_{> N} u \|_{L_{t}^{2} L_{x}^{\infty}}^{2} \| u \|_{L_{t}^{\infty} L_{x}^{2}}^{2} \lesssim \frac{N^{2}}{T} (\int_{a}^{b} \| \epsilon(t) \|_{L^{2}}^{2} \lambda(t)^{-2} dt) + \frac{N^{2}}{T^{10}}.
\end{equation}
\begin{equation}\label{5.29}
\aligned
\int |((P_{> N} u)^{2} u, P_{\leq N} (|u|^{2} u))_{L^{2}}| dt \lesssim \| P_{N < \cdot < 2N} ((P_{> N} u)^{2} u) \|_{L_{t}^{1} L_{x}^{\infty}} \| u \|_{L_{t}^{\infty} L_{x}^{2}}^{2} \| P_{\leq N} u \|_{L_{t,x}^{\infty}} \\
+ \| P_{\leq 2N}((P_{> N} u)^{2} u) \|_{L_{t,x}^{2}} \| P_{\leq N}((P_{> N} u)^{2} u) \|_{L_{t,x}^{2}} \lesssim \frac{N^{2}}{T} (\int_{a}^{b} \| \epsilon(t) \|_{L^{2}}^{2} \lambda(t)^{-2} dt) + \frac{N^{2}}{T^{10}}. 
\endaligned
\end{equation}

\begin{equation}\label{5.30}
\int |((P_{> N} u)(P_{\leq N} u)^{2}, \Delta P_{\leq N} u)_{L^{2}}| dt \lesssim N^{2} \| P_{> N} u \|_{L_{t}^{2} L_{x}^{\infty}} \| P_{> \frac{N}{8}} u \|_{L_{t}^{2} L_{x}^{\infty}} \| u \|_{L_{t}^{\infty} L_{x}^{2}}^{2} \lesssim \frac{N^{2}}{T} (\int_{a}^{b} \| \epsilon(t) \|_{L^{2}}^{2} \lambda(t)^{-2} dt) + \frac{N^{2}}{T^{10}}. 
\end{equation}
\begin{equation}\label{5.31}
\aligned
\int |((P_{> N} u)(P_{\leq N} u)^{2}, P_{\leq N}((P_{> N} u) u^{2}))_{L^{2}}| dt \lesssim \| P_{\leq N} u \|_{L_{t,x}^{\infty}}^{2} \| P_{> N} u \|_{L_{t}^{2} L_{x}^{\infty}}^{2} \| u \|_{L_{t}^{\infty} L_{x}^{2}}^{2} \\\lesssim \frac{N^{2}}{T} (\int_{a}^{b} \| \epsilon(t) \|_{L^{2}}^{2} \lambda(t)^{-2} dt) + \frac{N^{2}}{T^{10}}.
\endaligned
\end{equation}
\begin{equation}\label{5.32}
\aligned
\int |((P_{> N} u)(P_{\leq N} u)^{2}, P_{\leq N}((P_{\leq N} u)^{3}))_{L^{2}}| dt \lesssim \| P_{\leq N} u \|_{L_{t,x}^{\infty}}^{2} \| P_{> \frac{N}{8}} u \|_{L_{t}^{2} L_{x}^{\infty}}^{2} \| u \|_{L_{t}^{\infty} L_{x}^{2}}^{2} \\\lesssim \frac{N^{2}}{T} (\int_{a}^{b} \| \epsilon(t) \|_{L^{2}}^{2} \lambda(t)^{-2} dt) + \frac{N^{2}}{T^{10}}.
\endaligned
\end{equation}
This completes the proof of Proposition $\ref{p5.3}$.
\end{proof}

\begin{proof}[Proof of Proposition $\ref{p5.2}$]
This proposition is proved using induction on frequency. Fix $T^{1/6} \leq M \leq T^{1/3}$.
\begin{equation}\label{5.33}
\| P_{> M} u \|_{U_{\Delta}^{2}([a, b] \times \mathbb{R}^{2})} \lesssim \frac{1}{T^{5}} + \inf_{t \in [a, b]} \| \epsilon(t) \|_{L^{2}} + \| P_{> M} (|u|^{2} u) \|_{DU_{\Delta}^{2}([a, b] \times \mathbb{R}^{2})}.
\end{equation}
Since $u = \frac{1}{\lambda} Q(\frac{x}{\lambda}) + \frac{1}{\lambda} \epsilon(t, \frac{x}{\lambda})$, for any $\delta > 0$,
\begin{equation}\label{5.34}
\aligned
\| P_{> M}(|u|^{2} u) \|_{DU_{\Delta}^{2}([a, b] \times \mathbb{R}^{2})} \lesssim_{\delta} \| (P_{> \frac{M}{8}} u) \|_{L_{t}^{\infty} L_{x}^{2}} \| \frac{1}{\lambda} \epsilon(t, \frac{x}{\lambda}) \|_{L_{t}^{2} L_{x}^{\infty}}^{2} \\
+ \| (P_{> \frac{M}{8}} u) (\frac{1}{\lambda(t)} Q(\frac{x}{\lambda(t)})^{2}) \|_{L_{t}^{1} L_{x}^{2}([a, b] \times \mathbb{R}^{2})}^{\delta} \cdot (M^{-1/2} \sum_{j} \| (P_{> \frac{M}{8}} u) (\frac{1}{\lambda(t)} Q(\frac{x}{\lambda(t)})^{2}) \|_{L_{t, x}^{2} ([a, b] \times \{ |x| \sim 2^{j} \})})^{1 - \delta}.
\endaligned
\end{equation}
Using standard Strichartz estimates,
\begin{equation}\label{5.35}
\| (P_{> \frac{M}{8}} u) \|_{L_{t}^{\infty} L_{x}^{2}} \| \frac{1}{\lambda} \epsilon(t, \frac{x}{\lambda}) \|_{L_{t}^{2} L_{x}^{\infty}}^{2} \lesssim \| P_{> \frac{M}{8}} u \|_{U_{\Delta}^{2}([a, b] \times \mathbb{R}^{2})} (\int_{a}^{b} \frac{1}{\lambda(t)^{2}} \| \epsilon(t) \|_{L^{2}}^{2} dt) \lesssim \eta_{\ast}^{1/2} \| P_{> \frac{M}{8}} u \|_{U_{\Delta}^{2}([a, b] \times \mathbb{R}^{2})}.
\end{equation}
Since $\lambda(t) \geq 1$,
\begin{equation}\label{5.36}
\aligned
\| (P_{> \frac{M}{8}} u)(\frac{1}{\lambda(t)} Q(\frac{x}{\lambda(t)})) \|_{L_{t,x}^{2}} \lesssim \sup_{R > 0} R^{-1/2} \| P_{> \frac{M}{8}} u \|_{L_{t,x}^{2}(|x| \leq R)} \cdot \sum_{j \geq 0} 2^{-j} \sum_{k} 2^{k/2} Q(2^{-j} 2^{k}) \\ \lesssim \frac{1}{M^{1/2}} \| P_{> \frac{M}{8}} u \|_{U_{\Delta}^{2}([a, b] \times \mathbb{R}^{2})}.
\endaligned
\end{equation}
Also, since $\lambda(t) \geq 1$,
\begin{equation}\label{5.37}
\sum_{k} 2^{k/2} \| \frac{1}{\lambda(t)} Q(\frac{x}{\lambda(t)}) \|_{L_{t,x}^{\infty}(|x| \sim 2^{k})} \lesssim \sum_{j \geq 0} 2^{-j} \sum_{k} 2^{k/2} Q(2^{-j} 2^{k}) \lesssim 1.
\end{equation}
Finally, make the trivial estimate
\begin{equation}\label{5.38}
\| \frac{1}{\lambda(t)} Q(\frac{x}{\lambda(t)}) \|_{L_{t}^{2} L_{x}^{\infty}} \lesssim (\int_{a}^{b} \frac{1}{\lambda(t)^{2}} dt)^{1/2} \lesssim T^{1/2}.
\end{equation}
Plugging $(\ref{5.35})$--$(\ref{5.38})$ into $(\ref{5.34})$,
\begin{equation}\label{5.39}
(\ref{5.34}) \lesssim (\eta_{\ast}^{1/2} + \frac{T^{\delta/2}}{M^{1 - \delta/2}}) \| P_{> \frac{M}{8}} u \|_{U_{\Delta}^{2}([a, b] \times \mathbb{R}^{2})}.
\end{equation}
Arguing by induction on frequency, starting from $(\ref{5.2})$, which implies
\begin{equation}\label{5.40}
\| u \|_{U_{\Delta}^{2}([a, b] \times \mathbb{R}^{2})}^{2} \lesssim T,
\end{equation}
for $N = T^{1/3}$, there exists some $c > 0$ such that
\begin{equation}\label{5.41}
\| P_{> N} u \|_{U_{\Delta}^{2}([a, b] \times \mathbb{R}^{2})}^{2} \lesssim \frac{1}{T^{10}} + \frac{1}{T} \int_{a}^{b} \| \epsilon(t) \|_{L^{2}}^{2} \frac{1}{\lambda(t)^{2}} dt + T(\eta_{\ast}^{\frac{c}{2} \cdot \ln(T)} + T^{-c \ln(T)}) \lesssim \frac{1}{T^{10}} + \frac{1}{T} \int_{a}^{b} \| \epsilon(t) \|_{L^{2}}^{2} \frac{1}{\lambda(t)^{2}} dt.
\end{equation}
\end{proof}

\subsection{Estimates for the Chern--Simons--Schr{\"o}dinger equation}
Now we can prove a similar estimate for a solution to the Chern--Simons--Schr{\"o}dinger equation.
\begin{proposition}\label{p5.4}
Suppose $u$ is a solution to $(\ref{1.1})$,
\begin{equation}\label{5.42}
i u_{t} + \Delta u = \frac{2m}{r^{2}} A_{\theta}[u] u + A_{0}[u] u + \frac{1}{r^{2}} A_{\theta}[u]^{2} u - g |u|^{2} u, \qquad \| u \|_{L^{2}} = \| Q \|_{L^{2}}.
\end{equation}
Furthermore, suppose that for some interval $[a, b]$ with $a > 0$,
\begin{equation}\label{5.43}
\int_{a}^{b} \lambda(t)^{-2} dt = T, \qquad T > \eta_{\ast}^{-1},
\end{equation}
\begin{equation}\label{5.44}
\sup_{t > 0} dist(u(t), \mathcal M) \leq \eta_{\ast},
\end{equation}
and that $u$ is equivariant of order $m$. Finally, suppose that for all $t \in [a, b]$,
\begin{equation}\label{5.45}
1 \leq \lambda(t) \leq T^{1/100}.
\end{equation}
Then,
\begin{equation}\label{5.46}
\int_{a}^{b} \| \epsilon(t) \|_{L^{2}}^{2} \lambda(t)^{-2} dt \leq 3(\epsilon_{2}(a), Q + x \cdot \nabla Q)_{L^{2}} - (3 \epsilon_{2}(b), Q + x \cdot \nabla Q)_{L^{2}} + O(T^{-8}).
\end{equation} 
\end{proposition}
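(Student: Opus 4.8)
The plan is to carry out the argument for Proposition~\ref{p5.1}, which was given in full, replacing the local cubic nonlinearity $-|u|^{2}u$ by the full Chern--Simons nonlinearity $\Lambda(u)$ of Lemma~\ref{l3.4} and tracking the nonlocal potentials $A_{\theta}[u]$ and $A_{0}[u]$ through every step. As before one reduces to proving \eqref{5.46} under the bootstrap hypothesis $\int_{a}^{b}\|\epsilon(t)\|_{L^{2}}^{2}\lambda(t)^{-2}\,dt\le\eta_{\ast}^{1/2}$, and it then suffices to establish, with $N=T^{1/3}$ and $R=T^{1/25}$, three ingredients: a localized virial/Morawetz identity for \eqref{1.1}, a long time Strichartz estimate for $P_{>N}u$ (the analogue of Proposition~\ref{p5.2}), and an almost conservation law for $E(P_{\le N}u)$ (the analogue of Proposition~\ref{p5.3}).

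For the localized virial identity I would differentiate $\int\phi(x)\frac{x}{|x|}\cdot Im[\bar{u}\nabla u]$ using \eqref{1.1}. The structural point is that the gauge terms $\frac{2m}{r^{2}}A_{\theta}[u]u+A_{0}[u]u+\frac{1}{r^{2}}A_{\theta}[u]^{2}u$ all carry real-valued coefficients and are exactly the variational derivative of the magnetic part of the energy \eqref{1.6}; consequently the computation closes just as the exact virial identity \eqref{1.7.1} does, the bulk term being $4E[\chi(x/R)u]$ up to commutators supported at $|x|\sim R$, a term of the favourable sign \eqref{5.15}, an $O(R^{-2}\int_{|x|>R}|u|^{2})$ term, and $O(\|\epsilon\|_{L^{4}}^{4})$ terms. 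Since $g>1$ forces $\alpha>0$ in \eqref{1.10}, the soliton $Q$ is exponentially decaying, so exactly as in \eqref{5.14}, \eqref{5.16}, \eqref{5.17} the errors at $|x|\sim R=T^{1/25}\gg\lambda(t)$ are $O(\lambda(t)^{-2}T^{-10})$ and the $L^{4}$ errors are absorbed using \eqref{5.19}. The coercivity estimate \eqref{4.13}--\eqref{4.15} — which also relies on $\alpha>0$ and on the modulation of Theorem~\ref{t4.1}, so that $\epsilon\perp\psi,i\psi$ — then gives $E[\chi(x/R)u]\gtrsim\lambda(t)^{-2}\|\epsilon(t)\|_{L^{2}}^{2}-O(\lambda(t)^{-2}T^{-10})$, and the identity \eqref{5.21}, whose derivation uses only the decay of $Q$ and $dist(u(t),\mathcal M)\le\eta_{\ast}$, turns the boundary terms $\int\phi\frac{x}{|x|}\cdot Im[\overline{P_{\le N}u}\nabla P_{\le N}u]|_{a}^{b}$ into the $(\epsilon_{2},Q+x\cdot\nabla Q)_{L^{2}}$ expression of \eqref{5.46}.

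The long time Strichartz estimate and the almost conservation law will again be proved by induction on frequency and by computing $\frac{d}{dt}E(P_{\le N}u)$, following Propositions~\ref{p5.2} and~\ref{p5.3}. The only new feature is the nonlocal factors. Because $A_{\theta}[u]$ is quadratic in $u$, one has $A_{\theta}[u]-A_{\theta}[P_{\le N}u]=-\frac{1}{2}\int_{0}^{r}\big(2Re(\overline{P_{\le N}u}\,P_{>N}u)+|P_{>N}u|^{2}\big)s\,ds$, and similarly for $A_{0}[u]$, so every frequency-truncation error is once more multilinear in $P_{>N}u$ and is controlled by the long time Strichartz bound. The genuinely cubic piece $-g|u|^{2}u$ is handled verbatim as in \eqref{5.26}--\eqref{5.32} and \eqref{5.34}--\eqref{5.38}; for the gauge pieces one combines Lemma~\ref{l3.4}, the $L^{2}$-boundedness of the integral operations $\int_{0}^{r}(\cdot)\,s\,ds$ and $\int_{r}^{\infty}(\cdot)\,\frac{ds}{s}$, the endpoint Strichartz estimate of Lemma~\ref{l3.3} (applicable since $P_{\le N}u$ is again $m$-equivariant), and the pointwise decay of $Q$ to run the dyadic sums. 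Absorbing the resulting term into the left side of the virial estimate, as in \eqref{5.22}, closes the bootstrap and yields \eqref{5.46}.

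I expect the main obstacle to be the bookkeeping in the almost conservation law. One must verify that each of the many pieces of $\Lambda(u)$ minus the corresponding truncated nonlinearity of $P_{\le N}u$ — including the cross terms where $A_{\theta}$ or $A_{0}$ sees both $P_{\le N}u$ and $P_{>N}u$, and the terms obtained by expanding $A_{\theta}[u]^{2}$ — contributes at most $\frac{N^{2}}{T}\int_{a}^{b}\|\epsilon(t)\|_{L^{2}}^{2}\lambda(t)^{-2}\,dt+\frac{N^{2}}{T^{10}}$, the same bound as in \eqref{5.26}--\eqref{5.32}. Since the equation is nonlocal these estimates do not factor as cleanly as for the NLS, and one must repeatedly trade the Hardy-type weight $\frac{1}{r}$ produced by $A_{\theta}/r$ against the exponential decay of the soliton profile and the $L^{2}$-smallness of $\epsilon$; this is precisely the new difficulty noted in the introduction.
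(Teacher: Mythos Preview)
Your overall strategy matches the paper's exactly: bootstrap, localized Morawetz/virial identity (the paper packages this as Proposition~\ref{p5.7}), long time Strichartz estimate (Proposition~\ref{p5.5}), and almost conservation of $E(P_{\le N}u)$ (Proposition~\ref{p5.6}), then close as in \eqref{5.22}. The virial discussion and the treatment of the power nonlinearity $-g|u|^{2}u$ are fine.

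There is, however, a genuine missing idea in your handling of the gauge potentials in the long time Strichartz step. The tools you list --- Lemma~\ref{l3.4}, generic $L^{2}$-boundedness of $f\mapsto\int_{0}^{r}f\,s\,ds$ and $f\mapsto\int_{r}^{\infty}f\,\frac{ds}{s}$, decay of $Q$ --- do not by themselves control $P_{>M}\big(\tfrac{1}{r^{2}}A_{\theta}[u]\,P_{\le M/8}u\big)$ or $P_{>M}\big(A_{0}[u]\,P_{\le M/8}u\big)$, because the high frequency must sit on the \emph{nonlocal} factor, and there is no obvious Littlewood--Paley calculus for $\int_{r}^{\infty}(\cdot)\,\frac{ds}{s}$. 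The paper resolves this with two structural tricks you do not mention. First, the change of variables $s\mapsto sr$ rewrites $\tfrac{m}{r^{2}}A_{\theta}[u]=-\tfrac{m}{2}\int_{0}^{1}|u(t,sr)|^{2}\,s\,ds$ (equation \eqref{5.53}), so a high-frequency projection on this factor forces a $P_{>M/8}u$ inside the integral, and the induction closes. Second, for $A_{0}$ one replaces $A_{0}[u]$ by the low-frequency version $\tilde{A}_{0}[u]$, then subtracts the constant $c=\tilde{A}_{0}[u](0)$ (which $P_{>M}$ annihilates) to convert $\int_{r}^{\infty}$ into $-\int_{0}^{r}$, after which the same dilation argument shows $P_{>M}\big(\tilde{A}_{0}[u]\,P_{\le M/8}u\big)=0$ exactly (equations \eqref{5.71}--\eqref{5.73}). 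Without these observations the frequency induction does not close with the required gain, and ``trading the Hardy weight against decay of $Q$'' is not the mechanism at work here --- the point is a genuine Fourier-support cancellation, not a decay estimate.

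The same tricks (plus swapping order of integration, as in \eqref{5.102} and \eqref{5.111}) are what drive the almost conservation law for the gauge terms; once you have them, the ``bookkeeping'' you anticipate is indeed routine.
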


Proof of Proposition $\ref{p5.4}$ is the same as the proof of Proposition $\ref{p5.1}$, making use of a long time Strichartz estimate that is analogous to Proposition $\ref{p5.2}$ and an almost conservation of energy result analogous to Proposition $\ref{p5.3}$. The main difficulty is the fact that the Chern--Simons--Schr{\"o}dinger equation is nonlocal. Once again, we make a bootstrap assumption analogous to $(\ref{5.6})$.

\begin{proposition}[Long time Strichartz estimate]\label{p5.5}
If $u$ satisfies the conditions of Proposition $\ref{p5.4}$,
\begin{equation}\label{5.47}
\| P_{> N} u \|_{U_{\Delta}^{2}([a, b] \times \mathbb{R}^{2})}^{2} \lesssim \frac{1}{T^{10}} + \frac{1}{T} \int_{a}^{b} \| \epsilon(t) \|_{L^{2}}^{2} \lambda(t)^{-2} dt.
\end{equation}
\end{proposition}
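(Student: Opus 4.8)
The plan is to run the proof of Proposition \ref{p5.2} essentially verbatim, with the local cubic nonlinearity $-|u|^{2}u$ replaced by the nonlocal nonlinearity $\Lambda(u)$ of \eqref{3.8}, and to control the additional error terms produced by the nonlocal structure. As in \eqref{5.33}, the starting point is the transfer principle for $U^{2}_{\Delta}$ together with the Duhamel formula, which gives, for $T^{1/6}\le M\le T^{1/3}$,
\[
\| P_{>M} u \|_{U_{\Delta}^{2}([a,b]\times\mathbb{R}^{2})} \lesssim \frac{1}{T^{5}} + \inf_{t\in[a,b]} \| \epsilon(t) \|_{L^{2}} + \| P_{>M} \Lambda(u) \|_{DU_{\Delta}^{2}([a,b]\times\mathbb{R}^{2})},
\]
where the first term records that \eqref{5.43} and the rescaled local theory yield $\|u\|_{U^{2}_{\Delta}([a,b])}^{2}\lesssim T$, as in \eqref{5.40}. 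Everything reduces to the bound $\| P_{>M}\Lambda(u)\|_{DU^{2}_{\Delta}}\lesssim(\eta_{\ast}^{1/2}+T^{\delta/2}M^{-1+\delta/2})\|P_{>M/8}u\|_{U^{2}_{\Delta}}+T^{-5}$, after which the frequency induction closes exactly as in \eqref{5.41}: iterating over the $O(\ln T)$ dyadic scales between $T^{1/6}$ and $N=T^{1/3}$ produces a gain $(\eta_{\ast}^{c/2}+T^{-c})^{\ln T}$ that defeats the initial factor $T$.

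To estimate $P_{>M}\Lambda(u)$, write $u=Q_{\lambda(t)}+\epsilon_{\lambda(t)}$ with $Q_{\lambda}=\tfrac1\lambda Q(\cdot/\lambda)$ and $\epsilon_{\lambda}=\tfrac1\lambda \epsilon(t,\cdot/\lambda)$, and expand. The key algebraic observation is that $\Lambda$ is \emph{trilinear modulo bounded nonlocal operators}: each of $A_{\theta}[u]$, $A_{\theta}[u]^{2}$, $A_{0}[u]$ splits into a ``pure $Q$'' piece and pieces containing at least one factor of $\epsilon_{\lambda}$, the differences $A_{\theta}[u]-A_{\theta}[Q_{\lambda}]=-\tfrac12\int_{0}^{r}(|u|^{2}-Q_{\lambda}^{2})s\,ds$ and $A_{0}[u]-A_{0}[Q_{\lambda}]$ being explicit Hardy-type integrals of $Q_{\lambda}\bar\epsilon_{\lambda}$ and $|\epsilon_{\lambda}|^{2}$. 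Thus $\Lambda(u)$ is a finite sum of monomials of the form $\mathcal{O}\big(f_{1}\,f_{2}\,f_{3}\big)$ with each $f_{i}\in\{Q_{\lambda},\epsilon_{\lambda}\}$ and $\mathcal O$ one of the weighted integral operators $f\mapsto\tfrac1r\int_{0}^{r}f\,s\,ds$, $f\mapsto\tfrac{1}{r^{2}}\int_{0}^{r}\!\int_{0}^{r}f\,s\,ds$, $f\mapsto\int_{r}^{\infty}f\,\tfrac{ds}{s}$, or the identity (times a constant). By Hardy's inequality in the $m$-equivariant class these operators are bounded on $L^{2}_{m}$, and by Lemma \ref{l3.4} they are harmless in the relevant Strichartz norms; so, monomial by monomial, one is exactly in the situation of \eqref{5.27}--\eqref{5.38}.

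For the single monomial $\Lambda(Q_{\lambda})$ — the analogue of the $(P_{\le N}u)^{3}$ term — one uses the standing-wave equation \eqref{1.10} in the form $\Lambda(Q_{\lambda})=\Delta Q_{\lambda}-\alpha\lambda^{-2}Q_{\lambda}$. Since $g>1$ forces $\alpha>0$, the soliton $Q$ is smooth and \emph{rapidly decreasing} (this is precisely where the non-self-dual hypothesis enters; contrast \eqref{1.13}), so $\widehat{Q}$ is Schwartz and $\|P_{>M}Q_{\lambda}\|_{L^{2}}+\|P_{>M}\Delta Q_{\lambda}\|_{L^{2}}\lesssim_{K}(\lambda M)^{-K}$ for every $K$; with $\lambda\ge1$ and $M\ge T^{1/6}$, integrating $\lambda^{-2}\,dt$ over $[a,b]$ leaves $O(T^{-10})$. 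Every remaining monomial carries at least one factor $\epsilon_{\lambda}$, and is estimated in $DU^{2}_{\Delta}$ exactly as in \eqref{5.34}: $\|P_{>M/8}u\|_{U^{2}_{\Delta}}$ is pulled out, one $\epsilon_{\lambda}$ contributes $\int_{a}^{b}\lambda(t)^{-2}\|\epsilon(t)\|_{L^{2}}^{2}\,dt\lesssim\eta_{\ast}^{1/2}$ via the bootstrap assumption analogous to \eqref{5.6} together with the radial endpoint Strichartz estimate (Lemma \ref{l3.3}) for $\|\epsilon_{\lambda}\|_{L^{2}_{t}L^{\infty}_{x}}$, and whenever only one $\epsilon_{\lambda}$ is present the two remaining $Q_{\lambda}$-type factors are handled by the dyadic-in-$|x|$ local-smoothing decomposition of \eqref{5.36}--\eqref{5.38}, which extracts the $T^{\delta/2}M^{-1+\delta/2}$ gain from the rapid decay of $Q$. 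Summing the monomials gives the claimed bound and the induction closes.

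The \textbf{main obstacle} is the verification used above that the weighted nonlocal operators $\mathcal O$ — and the differences $A_{\theta}[u]-A_{\theta}[Q_{\lambda}]$, $A_{0}[u]-A_{0}[Q_{\lambda}]$ built from them — are bounded on $L^{2}_{m}$ \emph{and} remain compatible with both the smallness mechanism (the bootstrap, which requires each $\epsilon_{\lambda}$ factor to be measured in $\int\lambda^{-2}\|\epsilon\|_{L^{2}}^{2}$) and the $M$-gain mechanism (the dyadic-in-$|x|$ square-function estimate, which requires the $Q_{\lambda}$ factors to be pointwise rapidly decaying \emph{after} the integral operator has acted). Near $r=0$ this forces one to use the $m$-equivariance to see $A_{\theta}[u]=O(r^{2m+2})$, hence $\tfrac{1}{r^{2}}A_{\theta}[u]^{2}=O(r^{4m+2})$, while near $r=\infty$ one splits $A_{\theta}[u]=-\tfrac{1}{4\pi}\|u\|_{L^{2}}^{2}+(\text{rapidly decaying remainder})$, so that the constant part merely renormalizes $m$ and the remainder preserves the $M$-gain; this bookkeeping, absent in the local NLS case, is the one genuinely new ingredient.
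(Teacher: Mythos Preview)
Your overall architecture (Duhamel, transfer to $U^{2}_{\Delta}$, induction on frequency, bootstrap \eqref{5.6}) matches the paper, and your idea of isolating $\Lambda(Q_{\lambda})=\Delta Q_{\lambda}-\alpha\lambda^{-2}Q_{\lambda}$ via the soliton equation is a clean alternative to the paper's uniform treatment. The gap is in the ``main obstacle'' paragraph: Hardy boundedness of the operators $\mathcal O$ and the near-$0$/near-$\infty$ asymptotics you describe give $L^{2}_{m}$ mapping properties but \emph{not} the frequency localization the induction needs. Concretely, for a term such as
\[
\frac{2m}{r^{2}}\Big(\int_{0}^{r} Q_{\lambda}\,\overline{P_{\le M/8}\epsilon_{\lambda}}\,s\,ds\Big)\,Q_{\lambda},
\]
you must show that $P_{>M}$ of this is negligible, and your splitting $A_{\theta}=-\tfrac{1}{4\pi}\|u\|_{L^{2}}^{2}+(\text{remainder})$ does not help: the ``constant'' part leaves a factor $r^{-2}Q_{\lambda}$ whose Fourier transform is not small at $|\xi|\sim M$, and the remainder depends on $\epsilon_{\lambda}$, so differentiating it $k$ times costs $M^{k}$ and yields no decay at $|\xi|\sim M$. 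The ``renormalize $m$'' idea does not survive the projection $P_{>M}$ either, since $r^{-2}$ is not a Fourier multiplier.

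What the paper actually uses, and what is missing from your proposal, is the \emph{scaling representation}
\[
\frac{1}{r^{2}}\int_{0}^{r} f(s)\,s\,ds=\int_{0}^{1} f(sr)\,s\,ds,
\]
(see \eqref{5.53}, \eqref{5.58}, \eqref{5.73}). Because dilation by $s\le 1$ shrinks Fourier support, this exhibits $\tfrac{1}{r^{2}}A_{\theta}[u]$, $\tfrac{1}{r^{2}}A_{\theta}[u]^{2}$, and (after subtracting the constant \eqref{5.71}) $A_{0}[u]$ as averages of \emph{dilated} products of $u$; hence $P_{>M}$ applied to any such expression built from $P_{\le M/8}u$ vanishes or is $O(M^{-K})$, and the high frequency is forced onto a genuine factor of $u$, producing the $\|P_{>M/8}u\|_{U^{2}_{\Delta}}$ you need. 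For $A_{0}$ specifically, the paper first subtracts the constant $c=\tilde A_{0}(0)$ and then rewrites $\tilde A_{0}-c$ as an integral over $[0,r]$ to which the same scaling trick applies. Without this device (or an equivalent one), the induction step $\|P_{>M}\Lambda(u)\|_{DU^{2}_{\Delta}}\lesssim(\eta_{\ast}^{1/2}+T^{\delta/2}M^{-1+\delta/2})\|P_{>M/8}u\|_{U^{2}_{\Delta}}$ does not close for the nonlocal pieces.
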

\begin{proof}
As in the proof of Proposition $\ref{p5.2}$, for any $T^{1/6} \leq M \leq T^{1/3}$,
\begin{equation}\label{5.48}
\| P_{> M} (g |u|^{2} u) \|_{DU_{\Delta}^{2}([a, b] \times \mathbb{R}^{2})} \lesssim g (\eta_{\ast}^{1/2} + \frac{T^{\delta/2}}{M^{1 - \delta/2}}) \| P_{> \frac{M}{8}} u \|_{U_{\Delta}^{2}([a, b] \times \mathbb{R}^{2})}.
\end{equation}
Expanding
\begin{equation}\label{5.49}
\aligned
A_{\theta}[u] = -\frac{1}{2} \int_{0}^{r} |u(t, s)|^{2} s ds = -\frac{1}{2} \int_{0}^{r} \frac{1}{\lambda(t)^{2}} Q(\frac{s}{\lambda(t)})^{2} s ds \\ - Re \int_{0}^{r} \frac{1}{\lambda(t)^{2}} Q(\frac{s}{\lambda(t)}) \overline{\epsilon(t, \frac{s}{\lambda(t)})} s ds
- \frac{1}{2} \int_{0}^{r} \frac{1}{\lambda(t)^{2}} |\epsilon(t, \frac{s}{\lambda(t)})|^{2} s ds.
\endaligned
\end{equation}
Since
\begin{equation}\label{5.50}
\aligned
-\frac{1}{2 r^{2}} \int_{0}^{r} \frac{1}{\lambda(t)^{2}} Q(\frac{s}{\lambda(t)})^{2} s ds \lesssim \inf \{ \frac{1}{\lambda(t)^{2}}, \frac{1}{r^{2} \lambda(t)} \}, \\
-\frac{1}{2 r^{2}} \int_{0}^{r} \frac{1}{\lambda(t)^{2}} |\epsilon(t, \frac{s}{\lambda(t)})|^{2} s ds \lesssim \frac{1}{\lambda(t)^{2}} \| \epsilon(t) \|_{L^{\infty}}^{2},
\endaligned
\end{equation}
then using an argument similar to $(\ref{5.33})$--$(\ref{5.41})$,
\begin{equation}\label{5.51}
\| (\frac{1}{r^{2}} A_{\theta}[u]) (P_{> \frac{M}{8}} u) \|_{DU_{\Delta}^{2}([a, b] \times \mathbb{R}^{2})} \lesssim (\eta_{\ast}^{1/2} + \frac{T^{\delta/2}}{M^{1 - \delta/2}}) \| P_{> \frac{M}{8}} u \|_{U_{\Delta}^{2}([a, b] \times \mathbb{R}^{2})}.
\end{equation}
Meanwhile,
\begin{equation}\label{5.52}
P_{> M}(\frac{1}{r^{2}} A_{\theta}[u] (P_{\leq \frac{M}{8}} u)) = P_{> M}(P_{> \frac{M}{2}}(\frac{1}{r^{2}} A_{\theta}[u]) \cdot (P_{\leq \frac{M}{8}} u)).
\end{equation}

Making a change of variables,
\begin{equation}\label{5.53}
\frac{m}{r^{2}} A_{\theta}[u] = -\frac{m}{2} \int_{0}^{1} |u(t, s r)|^{2} s ds.
\end{equation}
Therefore,
\begin{equation}\label{5.54}
\| P_{> \frac{M}{2}} (\frac{2m}{r^{2}} A_{\theta}[u]) \|_{L_{t,x}^{2}} \lesssim \| P_{> \frac{M}{8}} u \|_{L_{t}^{2} L_{x}^{\infty}}.
\end{equation}
Decomposing $u = \frac{1}{\lambda(t)} Q(\frac{x}{\lambda}) + \frac{1}{\lambda(t)} \epsilon(t, \frac{x}{\lambda(t)})$ and using $(\ref{5.34})$ for the $Q$ term and $(\ref{5.35})$ for the $\epsilon$ term,
\begin{equation}\label{5.55}
\| P_{> \frac{M}{2}} (\frac{2m}{r^{2}} A_{\theta}[u]) (P_{\leq \frac{M}{8}} u) \|_{DU_{\Delta}^{2}} \lesssim (\eta_{\ast}^{1/2} + \frac{T^{\delta/2}}{M^{1 - \delta/2}}) \| P_{> \frac{M}{8}} u \|_{U_{\Delta}^{2}([a, b] \times \mathbb{R}^{2})}.
\end{equation}
By a similar argument,
\begin{equation}\label{5.56}
\| P_{> \frac{M}{2}} (\frac{1}{r^{2}} A_{\theta}[u]^{2}) (P_{\leq \frac{M}{8}} u) \|_{DU_{\Delta}^{2}} \lesssim (\eta_{\ast}^{1/2} + \frac{T^{\delta/2}}{M^{1 - \delta/2}}) \| P_{> \frac{M}{8}} u \|_{U_{\Delta}^{2}([a, b] \times \mathbb{R}^{2})}.
\end{equation}
Indeed, let $\chi \in C_{0}^{\infty}(\mathbb{R}^{2})$, $\chi(x) = 1$ for $|x| \leq 1$. Let
\begin{equation}\label{5.57}
\tilde{A}_{\theta}[u] = \int_{0}^{r} |P_{\leq \frac{M}{8}} u(t, s)|^{2} s ds.
\end{equation}
Then,
\begin{equation}\label{5.58}
\frac{\tilde{A}_{\theta}[u]^{2}}{r^{2}} = r^{2} \int_{0}^{1} |P_{\leq \frac{M}{8}} u(t, s r)|^{2} s ds \cdot \int_{0}^{1} |P_{\leq \frac{M}{8}} u(t, s' r)|^{2} s' ds'.
\end{equation}
Then by Fourier support arguments, for any $N$,
\begin{equation}\label{5.59}
\| P_{> M}(\frac{\tilde{A}_{0}[u]^{2}}{r^{2}}) \|_{L_{t,x}^{2}} \lesssim \| P_{> \frac{M}{2}}(\chi(r) r^{2}) \|_{L_{t}^{\infty} L_{x}^{2}} \| P_{\leq \frac{M}{8}} u \|_{L_{t}^{8} L_{x}^{\infty}}^{4} \lesssim \frac{1}{M^{N}} T^{1/2} M^{3}.
\end{equation}
Now let $\psi_{j}(r) = \chi(2^{-j} r) - \chi(2^{-j + 1} r)$. Then,
\begin{equation}\label{5.60}
\sum_{j} \| P_{> \frac{M}{2}}(\psi_{j}(r) r^{2}) \|_{L_{x}^{2}} \| P_{\leq \frac{M}{8}} u \|_{L_{t}^{8} L_{x}^{\infty}}^{4} \lesssim \sum_{j} 2^{-j} \frac{1}{M^{N}} T^{1/2} M^{3} \lesssim \frac{T^{1/2}}{M^{N}}.
\end{equation}
Next, for any $R$,
\begin{equation}\label{5.61}
\| \frac{1}{r^{2}} \int_{0}^{r} |P_{> \frac{M}{8}} u(t, s)| |u(t, s)| s ds \|_{L^{2}(R \leq \cdot \leq 2R)} \lesssim \sum_{j \leq 0} 2^{j} \| P_{> \frac{M}{8}} u \|_{L^{\infty}} \| u \|_{L^{2}(2^{j} R \leq \cdot \leq 2^{j + 1} R)}.
\end{equation}
Therefore, by Young's inequality, $\| A_{0}[u] \|_{L^{\infty}} + \| \tilde{A}_{0}[u] \|_{L^{\infty}} \lesssim \| u \|_{L^{2}}^{2}$, $(\ref{5.60})$, $(\ref{5.61})$, $M \geq T^{1/6}$, and again using $(\ref{5.34})$ for the $Q$ term and $(\ref{5.35})$ for the $\epsilon$ term,
\begin{equation}\label{5.62}
\| P_{> M}(\frac{A_{0}[u]^{2}}{r^{2}}) u \|_{DU_{\Delta}^{2}([a, b] \times \mathbb{R}^{2})} (\eta_{\ast}^{1/2} + \frac{T^{\delta/2}}{M^{1 - \delta/2}}) \| P_{> \frac{M}{8}} u \|_{U_{\Delta}^{2}([a, b] \times \mathbb{R}^{2})} + \frac{1}{T^{10}}.
\end{equation}

Finally turn to the $A_{0}[u] u$ term. Since $Q \in \mathcal H_{m}^{1}$,
\begin{equation}\label{5.63}
\| \frac{m}{r} Q \|_{L^{2}} \sim \int_{0}^{\infty} \frac{m^{2}}{r} Q(r)^{2} dr < \infty.
\end{equation}
Therefore,
\begin{equation}\label{5.64}
\int_{r}^{\infty} \frac{1}{\lambda(t)^{2}} Q(\frac{s}{\lambda(t)})^{2} \frac{1}{s} ds \lesssim \inf \{ \frac{1}{\lambda(t) r^{2}}, \frac{1}{\lambda(t)} \}.
\end{equation}
As in $(\ref{5.61})$,
\begin{equation}\label{5.65}
\| \int_{r}^{\infty} \frac{1}{\lambda(t)^{2}} |\epsilon(t, \frac{s}{\lambda(t)})|^{2} \frac{1}{s} ds \|_{L^{2}(R \leq \cdot \leq 2R)} \lesssim \frac{1}{\lambda(t)} \sum_{j \geq 0} 2^{-j/2} \| \frac{1}{\lambda(t)} \epsilon(t, \frac{s}{\lambda(t)}) \|_{L^{2}(2^{j} R \leq \cdot \leq 2^{j + 1} R)} \| \epsilon \|_{L^{\infty}}.
\end{equation}
Again, as in $(\ref{5.33})$--$(\ref{5.41})$,
\begin{equation}\label{5.66}
\| A_{0}[u] (P_{> \frac{M}{8}} u) \|_{DU_{\Delta}^{2}([a, b] \times \mathbb{R}^{2})} \lesssim (\eta_{\ast}^{1/2} + \frac{T^{\delta/2}}{M^{1 - \delta/2}}) \| P_{> \frac{M}{8}} u \|_{U_{\Delta}^{2}([a, b] \times \mathbb{R}^{2})}.
\end{equation}

It remains to estimate
\begin{equation}\label{5.67}
P_{> M}(A_{0}[u] (P_{\leq \frac{M}{8}} u)).
\end{equation}
Let
\begin{equation}\label{5.68}
\tilde{A}_{0}[u] = -\int_{r}^{\infty} (m + \tilde{A}_{\theta}[u])(t, s) |P_{\leq \frac{M}{8}} u(t, s)|^{2} \frac{ds}{s}.
\end{equation}
By direct computation, following $(\ref{5.65})$ and using the fact that $\| A_{\theta}[u] \|_{L^{\infty}} + \| \tilde{A}_{\theta}[u] \|_{L^{\infty}} \lesssim \| u \|_{L^{2}}^{2}$ and $|A_{\theta}[u] - \tilde{A}_{\theta}[u]|(r) \lesssim |\int_{0}^{r} |u| |P_{\geq \frac{M}{8}} u| s ds| \lesssim r \| u \|_{L^{2}} \| P_{\geq \frac{M}{8}} u \|_{L^{\infty}}$,
\begin{equation}\label{5.69}
\aligned
\| A_{0}[u] - \tilde{A}_{0}[u] \|_{L_{t,x}^{2}} \lesssim \| \int_{r}^{\infty} \frac{(m + \tilde{A}_{\theta}[u](t, s)}{s} |u(t, s)| |P_{> \frac{M}{8}} u(t, s)| ds \|_{L_{t,x}^{2}} + \| \int_{r}^{\infty} \frac{|A_{\theta}[u] - \tilde{A}_{\theta}[u]}{s} |u(t, s)|^{2} ds \|_{L_{t,x}^{2}} \\
\lesssim \| u \|_{L^{2}}^{2} \| \int_{r}^{\infty} \frac{|u(t, s)| |P_{\geq \frac{M}{8}} u|}{s} ds \|_{L_{t,x}^{2}} + \| \int_{r}^{\infty} \| u \|_{L^{2}} \| P_{\geq \frac{M}{8}} u \|_{L^{\infty}} |u(t, s)|^{2} ds \|_{L_{t,x}^{2}} \lesssim \| u \|_{L_{t}^{\infty} L_{x}^{2}}^{3} \| P_{\geq \frac{M}{8}} u \|_{L_{t}^{2} L_{x}^{\infty}}.
\endaligned
\end{equation}
By $(\ref{5.69})$, again using $(\ref{5.34})$ for the $Q$ term and $(\ref{5.35})$ for the $\epsilon$ term,
\begin{equation}\label{5.70}
\| (A_{0}[u] - \tilde{A}_{0}[u]) (P_{\leq \frac{M}{8}} u) \|_{DU_{\Delta}^{2}([a, b] \times \mathbb{R}^{2})}\lesssim (\eta_{\ast}^{1/2} + \frac{T^{\delta/2}}{M^{1 - \delta/2}}) \| P_{> \frac{M}{8}} u \|_{U_{\Delta}^{2}([a, b] \times \mathbb{R}^{2})}.
\end{equation}
Letting
\begin{equation}\label{5.71}
c = -\int_{0}^{\infty} \frac{(m + \tilde{A}_{\theta}[u](t, s)}{s} |P_{\leq \frac{M}{8}} u(t, s)|^{2} ds,
\end{equation}
\begin{equation}\label{5.72}
P_{> M} (\tilde{A}_{0}[u] (P_{\leq \frac{M}{8}} u)) = P_{> M} ((\tilde{A}_{0}[u] - c) \cdot (P_{\leq \frac{M}{8}} u)) = P_{> M} (-\int_{0}^{r} \frac{m + \tilde{A}_{\theta}[u](t, s)}{s} |P_{|\leq \frac{M}{8}} u(t, s)|^{2} ds \cdot (P_{\leq \frac{M}{8}} u(t,r))).
\end{equation}
Expanding out $(\ref{5.72})$, using $(\ref{5.57})$,
\begin{equation}\label{5.73}
\aligned
(\ref{5.72}) = -\int_{0}^{1} m |P_{\leq \frac{M}{8}} u(t, sr)|^{2} \frac{ds}{s} \cdot (P_{\leq \frac{M}{8}} u(t, r)) - \int_{0}^{1} \frac{\tilde{A}_{\theta}(t, sr)}{s^{2} r^{2}} |u(t, sr)|^{2} s ds \cdot (P_{\leq \frac{M}{8}} u(t, r)) \\
= -\int_{0}^{1} m |P_{\leq \frac{M}{8}} u(t, sr)|^{2} \frac{ds}{s} \cdot (P_{\leq \frac{M}{8}} u(t, r)) \\ - \int_{0}^{1} \int_{0}^{1} |P_{\leq \frac{M}{8}} u(t, s s' r)|^{2} |P_{\leq \frac{M}{8}} u(t, sr)|^{2} s s' ds ds' \cdot (P_{\leq \frac{M}{8}} u(t, r)).
\endaligned
\end{equation}
Therefore, $P_{> M} (\ref{5.72}) = 0$. Arguing using induction on frequency proves Proposition $\ref{p5.5}$.
\end{proof}

\begin{remark}
In fact, the same argument implies
\begin{equation}\label{5.74}
\| P_{> N} u \|_{U_{\Delta}^{2}([a, b] \times \mathbb{R}^{2})}^{2} \lesssim \frac{1}{T^{10}} + \inf_{t \in [a, b]} \| \epsilon(t) \|_{L^{2}}^{2}.
\end{equation}
Of course, by the intermediate value theorem,
\begin{equation}\label{5.75}
\inf_{t \in [a, b]} \| \epsilon(t) \|_{L^{2}}^{2} \lesssim \frac{1}{T} \int_{a}^{b} \| \epsilon(t) \|_{L^{2}}^{2} \lambda(t)^{-2} dt,
\end{equation}
but to prove rigidity we will use a better bound.
\end{remark}

\begin{proposition}[Energy estimate]\label{p5.6}
If $u$ satisfies the conditions of Proposition $\ref{p5.4}$, and $N \geq T^{1/3}$,
\begin{equation}\label{5.76}
\sup_{t \in [a, b]} E(P_{\leq N} u)(t) \lesssim N^{2} \inf_{t \in [a, b]} \| \epsilon(t) \|_{L^{2}}^{2} + \frac{N^{2}}{T^{10}}.
\end{equation}
In particular,
\begin{equation}\label{5.77}
\sup_{t \in [a, b]} E(P_{\leq N} u)(t) \lesssim \frac{N^{2}}{T} \int_{a}^{b} \| \epsilon(t) \|_{L^{2}}^{2} \lambda(t)^{-2} dt + \frac{N^{2}}{T^{10}}.
\end{equation}
\end{proposition}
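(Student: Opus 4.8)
The plan is to run the argument of Proposition $\ref{p5.3}$, but with the sharper long time Strichartz bound $(\ref{5.74})$ in place of $(\ref{5.10})$, and with the three nonlocal terms in $\Lambda(u)$ handled by the machinery already developed in the proof of Proposition $\ref{p5.5}$.

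\textbf{Step 1 (the energy at a good time):} Since $t \mapsto \| \epsilon(t) \|_{L^{2}}$ is continuous on the compact interval $[a,b]$ (Lemma $\ref{l3.4}$), choose $t_{0} \in [a,b]$ with $\| \epsilon(t_{0}) \|_{L^{2}}^{2} = \inf_{t \in [a,b]} \| \epsilon(t) \|_{L^{2}}^{2}$. Applying the modulation symmetry of Theorem $\ref{t4.1}$ we may write $u(t_{0}) = \frac{1}{\lambda(t_{0})}(Q + \epsilon(t_{0}))(\cdot/\lambda(t_{0}))$ with $\| \epsilon(t_{0}) \|_{L^{2}} \lesssim \eta_{\ast}$. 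Because $E$ is scaling invariant, $Q$ is Schwartz, and $N\lambda(t_{0}) \geq T^{1/3}$, we get $E(P_{\leq N} u(t_{0})) = E(Q + \tilde\epsilon)$ with $\tilde\epsilon = P_{\leq N\lambda(t_{0})}\epsilon(t_{0}) - P_{>N\lambda(t_{0})}Q$, so that $\| \tilde\epsilon \|_{L^{2}} \lesssim \eta_{\ast} \ll 1$ and the difference from $P_{\leq N\lambda(t_{0})}\epsilon(t_{0})$ has $H^{1}$ norm $\lesssim T^{-100}$. Expanding $E$ around $Q$ as in $(\ref{4.13})$ (only the upper bound is needed, so no coercivity or orthogonality is used), together with the elementary estimate $\| L_{Q} \tilde\epsilon \|_{L^{2}} \lesssim \| \tilde\epsilon \|_{\dot{H}_{m}^{1}}$ and Bernstein's inequality $\| \tilde\epsilon \|_{\dot{H}_{m}^{1}} \lesssim N\lambda(t_{0}) \| \epsilon(t_{0}) \|_{L^{2}} + T^{-100}$, gives
\begin{equation*}
E(P_{\leq N} u(t_{0})) \lesssim N^{2}\lambda(t_{0})^{2} \inf_{t \in [a,b]} \| \epsilon(t) \|_{L^{2}}^{2} + \frac{N^{2}}{T^{10}} \lesssim N^{2} \inf_{t \in [a,b]} \| \epsilon(t) \|_{L^{2}}^{2} + \frac{N^{2}}{T^{10}},
\end{equation*}
where in the last step the factor $\lambda(t_{0})^{2} \leq T^{1/50}$ is absorbed, which is inconsequential in what follows.

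\textbf{Step 2 (control of the time derivative, local part):} For any $t \in [a,b]$ write $E(P_{\leq N} u(t)) = E(P_{\leq N} u(t_{0})) + \int_{t_{0}}^{t} \frac{d}{ds} E(P_{\leq N} u)\, ds$, so it suffices to bound $\int_{a}^{b} | \frac{d}{ds} E(P_{\leq N} u) |\, ds$ by the right side of $(\ref{5.76})$. Using $i u_{t} = -\Delta u + \Lambda(u)$ and that the conserved energy has variational derivative $-\Delta w + \Lambda(w)$, the diagonal terms cancel (they are purely imaginary) and one is left with exactly the structure of $(\ref{5.25})$,
\begin{equation*}
\frac{d}{dt} E(P_{\leq N} u) = 2\,\mathrm{Re}\, \big\langle \Lambda(P_{\leq N} u) - \Lambda(u) + P_{>N}\Lambda(u),\ i\Delta P_{\leq N} u - i P_{\leq N}\Lambda(u) \big\rangle ,
\end{equation*}
where the factor $\Delta P_{\leq N} u$ produces the $N^{2}$ and every term of $\Lambda(P_{\leq N} u) - \Lambda(u)$ and of $P_{>N}\Lambda(u)$ carries at least one high frequency factor $P_{>N}u$. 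For the local cubic piece $-g|u|^{2}u$ one estimates these terms exactly as in $(\ref{5.25})$--$(\ref{5.32})$, now invoking Proposition $\ref{p5.5}$ in the form $(\ref{5.74})$ in place of Proposition $\ref{p5.2}$; this contributes $\lesssim N^{2}\| P_{>N}u \|_{U_{\Delta}^{2}([a,b] \times \mathbb{R}^{2})}^{2} \lesssim N^{2}( \inf_{t}\| \epsilon(t) \|_{L^{2}}^{2} + T^{-10})$.

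\textbf{Step 3 (the nonlocal terms — the main obstacle):} The terms $\frac{2m}{r^{2}} A_{\theta}[u] u$, $A_{0}[u] u$, $\frac{1}{r^{2}} A_{\theta}[u]^{2} u$ commute with $P_{\leq N}$ neither in the outer factor nor inside the potentials. I would treat them by reusing, essentially verbatim, the apparatus from the proof of Proposition $\ref{p5.5}$: decompose $A_{\theta}[u]$ and $A_{0}[u]$ according to $u = \frac{1}{\lambda}Q(\cdot/\lambda) + \frac{1}{\lambda}\epsilon(\cdot/\lambda)$ as in $(\ref{5.49})$; use the pointwise bounds $(\ref{5.50})$ and $(\ref{5.64})$, the dilation identities $(\ref{5.53})$ and $(\ref{5.58})$, the truncation error estimate $(\ref{5.69})$; and, crucially, the exact algebraic cancellations $P_{>M}(\tilde A_{0}[u]\, P_{\leq M/8}u) = 0$ of $(\ref{5.73})$ and its $\tilde A_{\theta}^{2}$ analogue, which kill the otherwise dangerous high-high-to-high interactions. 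Each resulting piece is again of leakage or commutator type with a factor $P_{>N}u$, controlled by $\| P_{>N}u \|_{U_{\Delta}^{2}}$ in the relevant Strichartz norms times harmlessly bounded low frequency factors (for example $\| u \|_{L_{t}^{\infty} L_{x}^{2}}$, $\| \frac{1}{\lambda}Q(\cdot/\lambda) \|_{L_{t,x}^{\infty}}$, or $\| A_{0}[u] \|_{L^{\infty}} \lesssim \| u \|_{L^{2}}^{2}$), again giving $\lesssim N^{2}( \inf_{t}\| \epsilon(t) \|_{L^{2}}^{2} + T^{-10})$. I expect this step to be the hard part, though it is not a genuinely new difficulty once Proposition $\ref{p5.5}$ is in hand.

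Combining Steps 1--3 yields $(\ref{5.76})$, and then $(\ref{5.77})$ follows from $(\ref{5.76})$ and the intermediate value theorem, which gives $\inf_{t \in [a,b]} \| \epsilon(t) \|_{L^{2}}^{2} \leq \frac{1}{T} \int_{a}^{b} \| \epsilon(t) \|_{L^{2}}^{2} \lambda(t)^{-2}\, dt$.
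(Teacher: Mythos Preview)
Your proposal is correct and follows the same architecture as the paper: pick a good time $t_{0}$, bound $E(P_{\leq N}u(t_{0}))$ directly, then control $\int |\partial_{t} E(P_{\leq N}u)|$ by splitting the commutator $\Lambda(P_{\leq N}u) - P_{\leq N}\Lambda(u)$ into local and nonlocal pieces and feeding in the long time Strichartz bound $(\ref{5.74})$.

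One technical point you should be aware of: in Step 3 you say the nonlocal pieces are handled ``essentially verbatim'' by the machinery of Proposition $\ref{p5.5}$, in particular the vanishing $(\ref{5.73})$. The paper in fact needs one additional maneuver that does not appear in the proof of Proposition $\ref{p5.5}$, namely a change of order of integration (Fubini) in terms such as
\[
\int \frac{2m}{r^{2}} (P_{\leq N} u)(\overline{P_{\leq \frac{N}{8}} u_{t}}) \Big\{ \int_{0}^{r} 2\,\mathrm{Re}\big((P_{\leq N} \bar u)(P_{> N} u)\big)\, s\, ds \Big\} r\, dr,
\]
see $(\ref{5.102})$ and $(\ref{5.111})$. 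The point is that the single explicit high–frequency factor $P_{>N}u$ sits \emph{inside} the nonlocal integral, so Fourier support arguments do not apply directly to the outer product; swapping the order of integration pulls $P_{>N}u$ outside, after which the remaining inner integral $\int_{r}^{\infty}(\cdots)\,ds$ is built from low–frequency data and the cancellation $(\ref{5.73})$ (or its $A_{\theta}$ analogue) forces a second high–frequency factor, yielding the needed $\|P_{>N}u\|_{U_{\Delta}^{2}}^{2}$. This is the genuine extra step beyond Proposition $\ref{p5.5}$; once you add it, your sketch goes through as written.
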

\begin{proof}
Recall that
\begin{equation}\label{5.78}
E[u] = \frac{1}{2} \int |\partial_{r} u|^{2} + \frac{1}{2} \int (\frac{m + A_{\theta}[u]}{r})^{2} |u|^{2} - \frac{g}{4} \int |u|^{4}.
\end{equation}
Now compute
\begin{equation}\label{5.79}
\frac{d}{dt} \frac{1}{2} \int |\partial_{r} P_{\leq N} u|^{2} = Re \int (\overline{\partial_{r} P_{\leq N} u_{t}})(\partial_{r} P_{\leq N} u) r dr = -Re \int (\overline{P_{\leq N} u_{t}})(\partial_{rr} + \frac{1}{r} \partial_{r})(P_{\leq N} u) r dr.
\end{equation}
\begin{equation}\label{5.80}
\frac{d}{dt}(-\frac{g}{4} \int |P_{\leq N} u|^{4}) = -g Re \int (\overline{P_{\leq N} u_{t}})(|P_{\leq N} u|^{2} (P_{\leq N} u)) dx.
\end{equation}
\begin{equation}\label{5.81}
\aligned
\frac{d}{dt} \frac{1}{2} \int (\frac{m + A_{\theta}[P_{\leq N} u]}{r})^{2} |P_{\leq N} u|^{2} = Re \int (\frac{m + A_{\theta}[P_{\leq N} u]}{r})^{2} (\overline{P_{\leq N} u_{t}})(P_{\leq N} u) \\ + Re \int \frac{\dot{A}_{\theta}[P_{\leq N} u]}{r} (\frac{m + A_{\theta}[P_{\leq N} u]}{r}) |P_{\leq N} u|^{2}.
\endaligned
\end{equation}

Split
\begin{equation}\label{5.82}
|P_{\leq N} u|^{2} (P_{\leq N} u) = P_{\leq N}(|u|^{2} u) + P_{> N}(|u|^{2} u) + |P_{\leq N} u|^{2} (P_{\leq N} u) - |u|^{2} u.
\end{equation}
\begin{equation}\label{5.83}
\frac{2m A_{\theta}[P_{\leq N} u]}{r^{2}} (P_{\leq N} u) = P_{\leq N} (\frac{2m A_{\theta}[u]}{r^{2}} u) + P_{> N} (\frac{2m}{r^{2}} A_{\theta}[u] u) + \frac{2m A_{\theta}[P_{\leq N} u]}{r^{2}} (P_{\leq N} u) - \frac{2m A_{\theta}[u]}{r^{2}} u.
\end{equation}
\begin{equation}\label{5.84}
\frac{A_{\theta}[P_{\leq N} u]^{2}}{r^{2}} (P_{\leq N} u) = P_{\leq N} (\frac{A_{\theta}[u]^{2}}{r^{2}} u) + P_{> N}(\frac{A_{\theta}[u]^{2}}{r^{2}} u) + \frac{A_{\theta}[P_{\leq N} u]^{2}}{r^{2}} (P_{\leq N} u) - \frac{A_{\theta}[u]^{2}}{r^{2}} u.
\end{equation}
Finally,
\begin{equation}\label{5.85}
Re \int \frac{\dot{A}_{\theta}[P_{\leq N} u]}{r} (\frac{m + A_{\theta}[P_{\leq N} u]}{r}) |P_{\leq N} u|^{2} = \int (\overline{P_{\leq N} u_{t}}) A_{0}[P_{\leq N} u](P_{\leq N} u).
\end{equation}
Expanding,
\begin{equation}\label{5.86}
A_{0}[P_{\leq N} u](P_{\leq N} u) = P_{\leq N}(A_{0}[u] u) + P_{> N}(A_{0}[u] u) + A_{0}[P_{\leq N} u](P_{\leq N} u) - A_{0}[u] u.
\end{equation}
Since $u$ is $m$-equivariant,
\begin{equation}\label{5.87}
(\partial_{rr} + \frac{1}{r} \partial_{r}) (P_{\leq N} u) - \frac{m^{2}}{r^{2}} (P_{\leq N} u) = \Delta P_{\leq N} u = P_{\leq N} \Delta u = P_{\leq N} (\partial_{rr} + \frac{1}{r} \partial_{r} - \frac{m^{2}}{r^{2}}) u.
\end{equation}
Taking the first term in $(\ref{5.79})$--$(\ref{5.87})$,
\begin{equation}\label{5.88}
Re \int (\overline{P_{\leq N} u_{t}}) \cdot P_{\leq N}(-\partial_{rr} u - \frac{1}{r} \partial_{r} u + \frac{m^{2}}{r^{2}} u + \frac{2m A_{\theta}[u]}{r^{2}} u + \frac{A_{\theta}[u]^{2}}{r^{2}} u + A_{0}[u] u - g |u|^{2} u) = Re \int (\overline{P_{\leq N} u_{t}}) (P_{\leq N} i u_{t}) = 0.
\end{equation}
Next, taking the second terms in $(\ref{5.79})$--$(\ref{5.87})$,
\begin{equation}\label{5.89}
\aligned
Re \int (\overline{P_{\leq N} u_{t}}) \cdot P_{> N} (-g |u|^{2} u + \frac{2m}{r^{2}} A_{\theta}[u] u + \frac{A_{\theta}[u]^{2}}{r^{2}} u + A_{0}[u] u) \\
= Re \int (\overline{P_{\frac{N}{2} \leq \cdot \leq N} u_{t}}) \cdot P_{N < \cdot < 2N} (-g |u|^{2} u + \frac{2m}{r^{2}} A_{\theta}[u] u + \frac{A_{\theta}[u]^{2}}{r^{2}} u + A_{0}[u] u).
\endaligned
\end{equation}
Following the computations in the proof of Proposition $\ref{p5.5}$,
\begin{equation}\label{5.90}
\aligned
(\ref{5.89}) \lesssim N^{2} \| P_{> \frac{N}{2}} u \|_{L_{t}^{2} L_{x}^{\infty}} \| P_{> N} (-g |u|^{2} u + \frac{2m}{r^{2}} A_{\theta}[u] u + \frac{A_{\theta}[u]^{2}}{r^{2}} u + A_{0}[u] u) \|_{L_{t}^{2} L_{x}^{1}} \\
+ \| P_{\frac{N}{2} < \cdot < 2N} (-g |u|^{2} u + \frac{2m}{r^{2}} A_{\theta}[u] u + \frac{A_{\theta}[u]^{2}}{r^{2}} u + A_{0}[u] u) \|_{L_{t,x}^{2}}^{2} \lesssim N^{2} \inf_{t \in [a, b]} \| \epsilon(t) \|_{L^{2}}^{2} + \frac{N^{2}}{T^{10}}.
\endaligned
\end{equation}

Now take the third term in $(\ref{5.82})$--$(\ref{5.86})$. To simplify notation let
\begin{equation}\label{5.91}
\aligned
\mathcal N = -g (|u|^{2} u - |P_{\leq N} u|^{2} (P_{\leq N} u)) + \frac{2m}{r^{2}} (A_{\theta}[P_{\leq N} u](P_{\leq N} u) - A_{\theta}[u] u) \\ + \frac{1}{r^{2}}(A_{\theta}[P_{\leq N} u]^{2} (P_{\leq N} u) - A_{\theta}[u]^{2} u) + (A_{0}[P_{\leq N} u](P_{\leq N} u) - A_{0}[u] u).
\endaligned
\end{equation}
Since the computations proving Proposition $\ref{p5.5}$ still hold,
\begin{equation}\label{5.92}
\| \mathcal N \|_{L_{t}^{2} L_{x}^{1}}^{2} \lesssim \inf_{t \in [a, b]} \| \epsilon(t) \|_{L^{2}}^{2} + \frac{1}{T^{10}}.
\end{equation}

Now split,
\begin{equation}\label{5.93}
\aligned
\int (\overline{P_{\leq N} u_{t}}) \mathcal N = \int (\overline{P_{\frac{N}{8} < \cdot < N} u_{t}})\mathcal N 
+ \int (\overline{P_{\leq \frac{N}{8}} u_{t}})\mathcal N.
\endaligned
\end{equation}
Again following the computations in $(\ref{5.90})$,
\begin{equation}\label{5.94}
\int (\overline{P_{\frac{N}{8} < \cdot < N} u_{t}})\mathcal N \lesssim \frac{N^{2}}{T^{10}} + N^{2} \inf_{t \in [a, b]} \| \epsilon(t) \|_{L^{2}}^{2}.
\end{equation}

By the Sobolev embedding theorem,
\begin{equation}\label{5.95}
\| \frac{2m}{r^{2}} A_{\theta}[P_{\leq N} u] (P_{\leq N} u) + \frac{1}{r^{2}} A_{\theta}[P_{\leq N} u]^{2} (P_{\leq N} u) + A_{0}[P_{\leq N} u](P_{\leq N} u) + g |P_{\leq N} u|^{2} (P_{\leq N} u) \|_{L_{t}^{\infty} L_{x}^{2}} \lesssim N^{2} \| u \|_{L^{2}}^{3}.
\end{equation}
Also, by $(\ref{5.92})$ and the Sobolev embedding theorem,
\begin{equation}\label{5.96}
\aligned
\| P_{\leq \frac{N}{8}} \mathcal N \|_{L_{t}^{2} L_{x}^{\infty}} \lesssim N^{2} \| \mathcal N \|_{L_{t}^{2} L_{x}^{1}} \lesssim N^{2} (\inf_{t \in [a, b]} \| \epsilon(t) \|_{L^{2}}^{2} + \frac{1}{T^{10}})^{1/2}.
\endaligned
\end{equation}
Combining $(\ref{5.95})$ and $(\ref{5.96})$,
\begin{equation}\label{5.97}
\| P_{\leq \frac{N}{8}} u_{t} \|_{L_{t}^{\infty} L_{x}^{2} + L_{t}^{2} L_{x}^{\infty}} \lesssim 1 + (\inf_{t \in [a, b]} \| \epsilon(t) \|_{L^{2}}^{2} + \frac{1}{T^{10}})^{1/2}.
\end{equation}
Now turn to the third terms in $(\ref{5.82})$--$(\ref{5.86})$ with $(\overline{P_{\leq N} u_{t}})$ replaced by $(\overline{P_{\leq \frac{N}{8}} u_{t}})$. First take the third term in $(\ref{5.82})$. By Fourier support arguments,
\begin{equation}\label{5.98}
\aligned
\int (\overline{P_{\leq \frac{N}{8}} u_{t}}) (|P_{\leq N} u|^{2}(P_{\leq N} u) - |u|^{2} u) \lesssim \| P_{> N} u \|_{L_{t}^{2} L_{x}^{\infty}} \| P_{> \frac{N}{8}} u \|_{L_{t}^{2} L_{x}^{\infty} \cap L_{t}^{\infty} L_{x}^{2}} \| u \|_{L_{t}^{\infty} L_{x}^{2}} \| P_{\leq \frac{N}{8}} u_{t} \|_{L_{t}^{2} L_{x}^{\infty} + L_{t}^{\infty} L_{x}^{2}} \\
\lesssim N^{2} (\inf_{t \in [a, b]} \| \epsilon(t) \|_{L^{2}}^{2} + \frac{1}{T^{10}}).
\endaligned
\end{equation}
Next take $(\ref{5.83})$,
\begin{equation}\label{5.99}
A_{\theta}[u] u - A_{\theta}[P_{\leq N} u](P_{\leq N} u) = A_{\theta}[u] (P_{> N} u) + (A_{\theta}[u] - A_{\theta}[P_{\leq N} u])(P_{\leq N} u).
\end{equation}
By standard Fourier support arguments,
\begin{equation}\label{5.100}
\int (\overline{P_{\leq \frac{N}{8}} u_{t}})(P_{> N} u) \frac{2m}{r^{2}} A_{\theta}[u] = \int  (\overline{P_{\leq \frac{N}{8}} u_{t}})(P_{> N} u) P_{> \frac{N}{2}} (\frac{2m}{r^{2}} A_{\theta}[u]) \lesssim N^{2} (\inf_{t \in [a, b]} \| \epsilon(t) \|_{L^{2}}^{2} + \frac{1}{T^{10}}).
\end{equation}
Next,
\begin{equation}\label{5.101}
\aligned
\int \frac{2m}{r^{2}} (P_{\leq N} u)(\overline{P_{\leq \frac{N}{8}} u_{t}}) \{ A_{0}[u] - A_{0}[P_{\leq N} u] \} = \int \frac{2m}{r^{2}} (P_{\leq N} u)(\overline{P_{\leq \frac{N}{8}} u_{t}}) \{ Re \int_{0}^{r} 2 Re((P_{\leq N} u)(P_{> N} u)) s ds \} \\
+ \int \frac{2m}{r^{2}} (P_{\leq N} u)(\overline{P_{\leq \frac{N}{8}} u_{t}}) \{ Re \int_{0}^{r} |P_{> N} u)|^{2} s ds \}.
\endaligned
\end{equation}
Changing the order of integration,
\begin{equation}\label{5.102}
\aligned
 \int \frac{2m}{r^{2}} (P_{\leq N} u)(\overline{P_{\leq \frac{N}{8}} u_{t}}) \{ Re \int_{0}^{r} 2 Re((P_{\leq N} u)(P_{> N} u)) s ds \} \\ = Re \int (P_{> N} u)(\overline{P_{\leq N} u}) \{ Re \int_{r}^{\infty} \frac{2m}{s} (P_{\leq N} u)(\overline{P_{\leq \frac{N}{8}} u_{t}}) ds \}.
 \endaligned
\end{equation}
Following the computations in $(\ref{5.67})$--$(\ref{5.73})$,
\begin{equation}\label{5.103}
(\ref{5.102}) \lesssim \frac{N^{2}}{T^{10}} + N^{2} \inf_{t \in [a, b]} \| \epsilon(t) \|_{L^{2}}^{2}.
\end{equation}
Also,
\begin{equation}\label{5.104}
\| \frac{2m}{r^{2}} \int_{0}^{r} s |P_{> N} u|^{2} ds \|_{L_{t}^{1} L_{x}^{\infty}} \lesssim \| P_{> N} u \|_{L_{t}^{2} L_{x}^{\infty}}^{2}, \qquad \| \frac{2m}{r^{2}} \int_{0}^{r} s |P_{> N} u|^{2} ds \|_{L_{t,x}^{2}} \lesssim \| u \|_{L^{2}} \| P_{> N} u \|_{L_{t}^{2} L_{x}^{\infty}}.
\end{equation}
Combining $(\ref{5.97})$ with $(\ref{5.104})$,
\begin{equation}\label{5.105}
\int \frac{2m}{r^{2}} (\overline{P_{\leq \frac{N}{8}} u_{t}})(P_{\leq N} u) (A_{\theta}[u] - A_{\theta}[P_{\leq N} u) \lesssim N^{2} \inf_{t \in [a, b]} \| \epsilon(t) \|_{L^{2}}^{2} + \frac{N^{2}}{T^{10}}.
\end{equation}

Now turn to $(\ref{5.84})$. Expanding,
\begin{equation}\label{5.106}
A_{\theta}[P_{\leq N} u]^{2} (P_{\leq N} u) - A_{\theta}[u]^{2} u = A_{\theta}[u]^{2} (P_{> N} u) + (A_{\theta}[P_{\leq N} u] - A_{\theta}[u])(A_{\theta}[P_{\leq N} u] + A_{\theta}[u])(P_{\leq N} u).
\end{equation}
Following $(\ref{5.55})$,
\begin{equation}\label{5.107}
\int \frac{1}{r^{2}} (\overline{P_{\leq \frac{N}{8}} u_{t}})(P_{> N} u) A_{\theta}[u]^{2} \lesssim N^{2} \inf_{t \in [a, b]} \| \epsilon(t) \|_{L^{2}}^{2} + \frac{N^{2}}{T^{10}}.
\end{equation}
Next, as in $(\ref{5.101})$, after changing the order of integration and using $(\ref{5.67})$--$(\ref{5.73})$,
\begin{equation}\label{5.108}
\int \frac{1}{r^{2}} (\overline{P_{\leq \frac{N}{8}} u_{t}})(P_{> N} u) (A_{\theta}[P_{\leq N} u] - A_{\theta}[u])(A_{\theta}[P_{\leq N} u] + A_{\theta}[u]) \lesssim N^{2} \inf_{t \in [a, b]} \| \epsilon(t) \|_{L^{2}}^{2} + \frac{N^{2}}{T^{10}}.
\end{equation}
Finally, decompose
\begin{equation}\label{5.109}
A_{0}[P_{\leq N} u] (P_{\leq N} u) - A_{0}[u] u = -A_{0}[u](P_{> N} u) + (A_{0}[P_{\leq N} u] - A_{0}[u]) (P_{\leq N} u).
\end{equation}
Again following $(\ref{5.67})$--$(\ref{5.73})$,
\begin{equation}\label{5.110}
\int (\overline{P_{\leq \frac{N}{8}} u_{t}}) A_{0}[u] (P_{> N} u) = \int P_{> \frac{N}{2}} ((\overline{P_{\leq \frac{N}{8}} u_{t}}) A_{0}[u]) (P_{> N} u) \lesssim N^{2} \inf_{t \in [a, b]} \| \epsilon(t) \|_{L^{2}}^{2} + \frac{N^{2}}{T^{10}}.
\end{equation}
Finally, changing the order of integration,
\begin{equation}\label{5.111}
\aligned
\int (\overline{P_{\leq \frac{N}{8}} u_{t}})(P_{\leq N} u)(A_{0}[P_{\leq N} u] - A_{0}[u]) \\ = \int \{ (\frac{m + A_{\theta}[u]}{r^{2}}) |u|^{2} - (\frac{m + A_{\theta}[P_{\leq N} u]}{r^{2}}) |P_{\leq N} u|^{2} \} \cdot \{ \int_{0}^{r} (\overline{P_{\leq \frac{N}{8}} u_{t}})(P_{\leq N} u) s ds \} r dr dt.
\endaligned
\end{equation}
Then, applying the arguments in $(\ref{5.53})$--$(\ref{5.62})$ to $(\ref{5.111})$ proves that
\begin{equation}\label{5.112}
(\ref{5.111}) \lesssim N^{2} \inf_{t \in [a, b]} \| \epsilon(t) \|_{L^{2}}^{2} + \frac{N^{2}}{T^{10}}.
\end{equation}
This finally proves Proposition $(\ref{p5.6})$.
\end{proof}

\begin{proof}[Proof of Proposition $\ref{p5.4}$]
Now we are finally ready to prove Proposition $\ref{p5.4}$. The proof uses the Morawetz estimate.
\begin{proposition}\label{p5.7}
Let $u$ be a solution to $(\ref{1.1})$ and let $\psi \in C_{0}^{\infty}(\mathbb{R}^{2})$ be a smooth, radially symmetric function such that $\psi(r) = r$ for $r \leq 1$, $\psi(r) = \frac{3}{2}$ for $r > 2$, and $\partial_{r}(\psi(r)) = \phi(r)^{2}$ $\phi(r) \in C_{0}^{\infty}(\mathbb{R}^{2})$, and $\phi(r) \geq 0$. Then if
\begin{equation}\label{5.113}
M(t) = R \int \psi(\frac{r}{R}) Im[\bar{u} \partial_{r} u](t, r) r dr,
\end{equation}
then
\begin{equation}\label{5.114}
\aligned
\frac{d}{dt} M(t) = 2 \int \phi^{2}(\frac{r}{R}) r |u_{r}|^{2} + 2 \int \phi^{2}(\frac{r}{R}) (\frac{m + A_{\theta}[u]}{r})^{2} |u|^{2} - g \int \phi^{2}(\frac{r}{R}) |u|^{4} r \\  + O(\int_{r \geq R} \frac{1}{r^{2}} |u|^{2}) + O(\int_{r \geq R} |u|^{4}).
\endaligned
\end{equation}
\end{proposition}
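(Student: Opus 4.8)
The plan is to differentiate $M(t)$ in $t$, insert the profile form of $(\ref{1.1})$, and integrate by parts in $r$, following the local computation leading to $(\ref{5.13})$ but carrying along the nonlocal fields $A_{\theta}[u]$ and $A_{0}[u]$. Writing the equation as $iu_{t}=F$ with
\[
F \;=\; \Lambda(u) - \Big(\partial_{rr}+\tfrac1r\partial_{r}\Big)u + \tfrac{m^{2}}{r^{2}}u \;=\; -\Big(\partial_{rr}+\tfrac1r\partial_{r}\Big)u + Wu,\qquad W:=\tfrac{(m+A_{\theta}[u])^{2}}{r^{2}}+A_{0}[u]-g|u|^{2},
\]
(all of $W$ real), one has the pointwise identity $\partial_{t}\,\mathrm{Im}[\bar u\,\partial_{r}u]=2\,\mathrm{Re}[\bar F\,\partial_{r}u]-\partial_{r}\,\mathrm{Re}[\bar uF]$. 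Setting $b(r):=R\psi(r/R)$, so that $b'(r)=\phi(r/R)^{2}$ and $b(r)=r$, $b'(r)=1$ for $r\le R$, one integration by parts in $r$ (all boundary terms vanish by the compact support of $\phi$ and the decay of $u$ at $0$ and $\infty$) gives
\[
\frac{d}{dt}M(t) \;=\; 2\int b(r)\,\mathrm{Re}[\bar F\,\partial_{r}u]\,r\,dr \;+\; \int\big(b'(r)r+b(r)\big)\,\mathrm{Re}[\bar uF]\,dr.
\]

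I would then treat the two parts of $F$ in turn. For the radial Laplacian $-(\partial_{rr}+\frac1r\partial_{r})u$, two further integrations by parts (the equivariant analogue of the kinetic term in $(\ref{5.13})$) produce the main term $2\int\phi(r/R)^{2}\,r\,|u_{r}|^{2}$ together with a remainder involving the second and third $r$-derivatives of $b$ (and a $b'/r-b/r^{2}$ piece); this remainder vanishes identically for $r\le R$, is supported on $\{r\gtrsim R\}$ where it is $O(r^{-2})$, hence contributes $O\!\big(\int_{r\ge R}r^{-2}|u|^{2}\big)$. For a real potential $Vu$ inside $F$, a one-line computation (using $\mathrm{Re}[\overline{Vu}\,\partial_{r}u]=\tfrac V2\partial_{r}|u|^{2}$ together with one integration by parts) shows that its total contribution to $\frac{d}{dt}M(t)$ is exactly $-\int b(r)V'(r)\,r\,|u|^{2}\,dr$.

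The heart of the argument is to evaluate $-\int bW'r|u|^{2}$. Here one uses the Chern--Simons constraint equations
\[
\partial_{r}A_{\theta}[u]=-\tfrac12 r|u|^{2},\qquad \partial_{r}A_{0}[u]=\tfrac{m+A_{\theta}[u]}{r}|u|^{2},
\]
which force the cancellation $\partial_{r}\!\big(\tfrac{(m+A_{\theta}[u])^{2}}{r^{2}}+A_{0}[u]\big)=-\tfrac{2(m+A_{\theta}[u])^{2}}{r^{3}}$. Hence the magnetic terms $\tfrac{2m}{r^{2}}A_{\theta}[u]u$, $\tfrac1{r^{2}}A_{\theta}[u]^{2}u$, the electric term $A_{0}[u]u$, and the angular part $m^{2}/r^{2}$ of the Laplacian collapse into the single positive term $2\int b(r)\tfrac{(m+A_{\theta}[u])^{2}}{r^{2}}|u|^{2}\,dr$, which equals $2\int\phi(r/R)^{2}\big(\tfrac{m+A_{\theta}[u]}{r}\big)^{2}|u|^{2}$ up to an error supported on $\{r\gtrsim R\}$ and bounded, using $\|A_{\theta}[u]\|_{L^{\infty}}\lesssim\|u\|_{L^{2}}^{2}$, by $O(\int_{r\ge R}r^{-2}|u|^{2})$. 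The cubic potential $V=-g|u|^{2}$ gives, after one more integration by parts, $-g\int\phi(r/R)^{2}|u|^{4}r\,dr$ plus a term supported on $\{r\gtrsim R\}$, i.e.\ $O(\int_{r\ge R}|u|^{4})$. Collecting all of the above yields $(\ref{5.114})$.

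The main obstacle is precisely the nonlocality of $A_{\theta}[u]$ and $A_{0}[u]$: in the local computation $(\ref{5.13})$ every derivative produced by the integrations by parts lands on $u$, whereas here one must check that the derivatives falling on $A_{\theta}[u]$ and $A_{0}[u]$ do not create terms genuinely depending on $u$ far from the point $r$, and that $A_{0}[u]$ — which, unlike $Q$, decays only polynomially — does not spoil the error estimate. The two constraint equations above resolve both issues: $\partial_{r}A_{\theta}$ and $\partial_{r}A_{0}$ are the \emph{local} quantities $-\tfrac12 r|u|^{2}$ and $\tfrac{m+A_{\theta}}{r}|u|^{2}$, so every surviving error term is quadratic or quartic in $u$ on $\{r\gtrsim R\}$, and the conserved bound $\|A_{\theta}[u]\|_{L^{\infty}}+\|A_{0}[u]\|_{L^{\infty}}\lesssim\|u\|_{L^{2}}^{2}$ absorbs the remaining lower-order factors. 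This is the one place where the Chern--Simons structure, as opposed to a generic nonlocal perturbation of the mass-critical NLS, genuinely enters.
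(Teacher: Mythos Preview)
Your proposal is correct and is exactly the ``direct computation and integrating by parts'' that the paper invokes without details. In particular, your identification of the Chern--Simons cancellation $\partial_{r}\big(\tfrac{(m+A_{\theta})^{2}}{r^{2}}+A_{0}\big)=-\tfrac{2(m+A_{\theta})^{2}}{r^{3}}$ is the crux of why the nonlocal terms collapse into the stated local form, and your bookkeeping of the $b(r)$ versus $r\,b'(r)=r\phi(r/R)^{2}$ discrepancies (all supported on $\{r\gtrsim R\}$ and controlled by $\|A_{\theta}\|_{L^{\infty}}\lesssim\|u\|_{L^{2}}^{2}$) matches the claimed error terms.
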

\begin{proof}
This follows by direct computation and integrating by parts.
\end{proof}

Taking $R = T^{1/25}$, since $Q$ is rapidly decreasing,
\begin{equation}\label{5.115}
\int_{r \geq R} |u(t, x)|^{2} \frac{1}{r^{2}} dx \lesssim \frac{1}{R \lambda(t)^{2}} \frac{1}{T^{10}} + \frac{1}{R \lambda(t)^{2}} \| \epsilon(t) \|_{L^{2}}^{2}.
\end{equation}
Therefore,
\begin{equation}\label{5.116}
\int_{a}^{b} \int_{r \geq R} |u(t, x)|^{2} \frac{1}{r^{2}} dx dt \ll \frac{1}{T^{9}} + \int_{a}^{b} \frac{1}{\lambda(t)^{2}} \| \epsilon(t) \|_{L^{2}}^{2} dt.
\end{equation}
Also using the fact that $Q$ is rapidly decreasing combined with the Strichartz estimates, for any $m \geq 0$,
\begin{equation}\label{5.117}
\int_{m}^{m + 1} \| \epsilon(s) \|_{L^{4}}^{4} ds \lesssim (\int_{m}^{m + 1} \| \epsilon(s) \|_{L^{2}}^{2})^{2},
\end{equation}
\begin{equation}\label{5.118}
\int_{a}^{b} \int_{r \geq R} |u(t, x)|^{4} dx dt \lesssim \int_{a}^{b} \frac{1}{R \lambda(t)^{2}} \frac{1}{T^{10}} dt + \int_{a}^{b} \frac{1}{\lambda(t)^{4}} \int |\epsilon(t, \frac{x}{\lambda(t)})|^{4} dx dt \leq \frac{1}{R} \frac{1}{T^{9}} + \eta_{\ast}^{2} \int_{a}^{b} \frac{1}{\lambda(t)^{2}} \| \epsilon(t) \|_{L^{2}}^{2} dt.
\end{equation}
Replacing $u$ by $P_{\leq N} u$ with $N = T^{1/3}$,
\begin{equation}\label{5.119}
\aligned
2 \int \phi^{2}(\frac{r}{R}) r |\partial_{r} P_{\leq N} u|^{2} + 2 \int \phi^{2}(\frac{r}{R}) (\frac{m + A_{\theta}[P_{\leq N} u]}{r})^{2} |P_{\leq N} u|^{2} - g \int \phi^{2}(\frac{r}{R}) |P_{\leq N} u|^{4} r \\
= 4 E[\phi(\frac{r}{R}) P_{\leq N} u] + O(\int_{r \geq R} \frac{1}{r^{2}} |u|^{2}) + O(\int_{r \geq R} |u|^{4}).
\endaligned
\end{equation}
Therefore,
\begin{equation}\label{5.120}
\int_{a}^{b} \| \epsilon(t) \|_{L^{2}}^{2} \frac{1}{\lambda(t)^{2}} dt \lesssim R \int \psi(\frac{r}{R}) Im[\overline{P_{\leq N} u} \partial_{r} P_{\leq N} u]|_{a}^{b} + \mathcal E,
\end{equation}
where $\mathcal E$ are the error terms arising from frequency truncation, see $(\ref{5.82})$--$(\ref{5.86})$. Now then, using $(\ref{5.90})$ and the fact that $R \psi(\frac{r}{R})$ is smooth,
\begin{equation}\label{5.121}
\aligned
 \int Re[P_{> N} (-g |u|^{2} u + \frac{2m}{r^{2}} A_{\theta}[u] u + \frac{A_{\theta}[u]^{2}}{r^{2}} u + A_{0}[u] u) \cdot \partial_{r} \overline{P_{\leq N} u}] R \psi(\frac{r}{R}) \\
 \lesssim RN \| P_{> N} (-g |u|^{2} u + \frac{2m}{r^{2}} A_{\theta}[u] u + \frac{A_{\theta}[u]^{2}}{r^{2}} u + A_{0}[u] u) \|_{L_{t}^{2} L_{x}^{1}} \| P_{\frac{N}{2} < \cdot < N} u \|_{L_{t}^{2} L_{x}^{\infty}} \\
 + RN \| P_{> N} (-g |u|^{2} u + \frac{2m}{r^{2}} A_{\theta}[u] u + \frac{A_{\theta}[u]^{2}}{r^{2}} u + A_{0}[u] u) \|_{L_{t}^{2} L_{x}^{1}} \| P_{\leq \frac{N}{2}} u \|_{L_{t}^{2} L_{x}^{\infty}} \| P_{> \frac{N}{2}} (\psi(\frac{r}{R})) \|_{L^{\infty}} \\
 \lesssim RN \inf_{t \in [a, b]} \| \epsilon(t) \|_{L^{2}}^{2} + \frac{RN}{T^{10}}.
 \endaligned
\end{equation}
Integrating by parts, the same estimate holds for
\begin{equation}\label{5.122}
 \int Re[\overline{P_{\leq N} u} \cdot \partial_{r} (P_{> N} (-g |u|^{2} u + \frac{2m}{r^{2}} A_{\theta}[u] u + \frac{A_{\theta}[u]^{2}}{r^{2}} u + A_{0}[u] u) ))] R \psi(\frac{r}{R}).
\end{equation}
Meanwhile, recalling $(\ref{5.91})$, and using $(\ref{5.92})$--$(\ref{5.112})$ along with the fact that $\psi(\frac{r}{R})$ is smooth,
\begin{equation}\label{5.123}
R \int Re[\bar{\mathcal N} \partial_{r} P_{\leq N} u] \psi(\frac{r}{R}) dx dt + R \int Re[(\overline{P_{\leq N} u}) \partial_{r} \mathcal N] \psi(\frac{r}{R}) \lesssim RN \inf_{t \in [a, b]} \| \epsilon(t) \|_{L^{2}}^{2} + \frac{RN}{T^{10}}.
\end{equation}
Therefore, $\int_{a}^{b} \mathcal E dt \lesssim RN \inf_{t \in [a, b]} \| \epsilon(t) \|_{L^{2}}^{2} + \frac{RN}{T^{10}}$. Since by Proposition $\ref{p5.6}$,
\begin{equation}\label{5.124}
R \int \psi(\frac{r}{R}) Im[\bar{P_{\leq N} \epsilon} \partial_{r} P_{\leq N} \epsilon] \lesssim RN \| \epsilon \|_{L^{2}}^{2} \lesssim T^{1/50} RN E[u(t)] \lesssim \frac{T^{1/50} RN}{T} \int_{a}^{b} \| \epsilon(t) \|_{L^{2}}^{2} \lambda(t)^{-2} dt + \frac{1}{T^{9}},
\end{equation}
and therefore $(\ref{5.46})$ holds.

\end{proof}

\section{An $L_{s}^{p}$ bound on $\| \epsilon(s) \|_{L^{2}}$ when $p > 1$}
As in the case of the two-dimensional mass-critical problem, Proposition $\ref{p5.4}$ implies that $\| \epsilon(s) \|_{L^{2}}$ lies in $L_{s}^{p}$ for any $p > 1$.
\begin{proposition}\label{p6.1}
Let $u$ be a solution to $(\ref{1.1})$ that satisfies $\| u \|_{L^{2}} = \| Q \|_{L^{2}}$, and suppose
\begin{equation}\label{6.1}
\sup_{s \in [0, \infty)} \| \epsilon(s) \|_{L^{2}} \leq \eta_{\ast},
\end{equation}
and $\| \epsilon(0) \|_{L^{2}} = \eta_{\ast}$. Then
\begin{equation}\label{6.2}
\int_{0}^{\infty} \| \epsilon(s) \|_{L^{2}}^{2} ds \lesssim \eta_{\ast},
\end{equation}
with implicit constant independent of $\eta_{\ast}$ when $\eta_{\ast} \ll 1$ is sufficiently small.

Furthermore, for any $j \in \mathbb{Z}_{\geq 0}$, let
\begin{equation}\label{6.3}
s_{j} = \inf \{ s \in [0, \infty) : \| \epsilon(s) \|_{L^{2}} = 2^{-j} \eta_{\ast} \}.
\end{equation}
By definition, $s_{0} = 0$, and the continuity of $\| \epsilon(s) \|_{L^{2}}$ combined with sequential convergence of blowup solutions implies that such an $s_{j}$ exists for any $j > 0$. Then,
\begin{equation}\label{6.4}
\int_{s_{j}}^{\infty} \| \epsilon(s) \|_{L^{2}}^{2} ds \lesssim 2^{-j} \eta_{\ast},
\end{equation}
for each $j \geq 0$, with implicit constant independent of $\eta_{\ast}$.
\end{proposition}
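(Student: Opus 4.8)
The plan is to run a continuity/bootstrap argument whose engine is the Morawetz-type estimate of Proposition~\ref{p5.4}, read in the rescaled time $s$ defined by $ds=\lambda(t)^{-2}\,dt$. In this variable Proposition~\ref{p5.4} asserts that on any $s$-interval $[\sigma,\sigma']$ on which, after renormalizing the scale so that $\inf\lambda=1$, one has $\sup\lambda\le(\sigma'-\sigma)^{1/100}$ and $\sigma'-\sigma>\eta_\ast^{-1}$,
\[
\int_\sigma^{\sigma'}\|\epsilon(s)\|_{L^2}^2\,ds\ \lesssim\ \|\epsilon(\sigma)\|_{L^2}+\|\epsilon(\sigma')\|_{L^2}+(\sigma'-\sigma)^{-8};
\]
indeed the boundary terms $(\epsilon_2(\sigma),Q+x\cdot\nabla Q)_{L^2}$ in the conclusion of Proposition~\ref{p5.4} are bounded by $\|\epsilon(\sigma)\|_{L^2}\|Q+x\cdot\nabla Q\|_{L^2}\lesssim\|\epsilon(\sigma)\|_{L^2}$, using that $Q=\psi^{(m,g)}$ is rapidly decreasing since $g>1$. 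The first point to record is that the modulated error $\epsilon$, the pairing $(\epsilon_2,Q+x\cdot\nabla Q)_{L^2}$, and the density $\|\epsilon(s)\|_{L^2}^2\,ds$ are all invariant under the scaling symmetry~$(\ref{1.7})$, because the scale has been modulated out. Hence on each piece of a partition we may freely renormalize $\lambda$, and, crucially, the Morawetz boundary terms telescope across a partition of an $s$-interval without any scale factors appearing.

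For the first bound $(\ref{6.2})$ I would argue by contradiction on the length. Let $C_0$ be a large absolute constant (at least twice the implicit constant above) and set $S^\ast=\sup\{S:\int_0^S\|\epsilon(s)\|_{L^2}^2\,ds\le C_0\eta_\ast\}$. If $S^\ast<\infty$ then $\int_0^{S^\ast}\|\epsilon(s)\|_{L^2}^2\,ds=C_0\eta_\ast$, and since $\|\epsilon\|_{L^2}\le\eta_\ast$ forces $C_0\eta_\ast\le\eta_\ast^2 S^\ast$, we have $S^\ast>\eta_\ast^{-1}$. Partition $[0,S^\ast]$ into consecutive $s$-intervals on which $\lambda$ can be renormalized to meet the hypotheses of Proposition~\ref{p5.4} (this is automatic when $\lambda\equiv1$; see below for the general case), apply that proposition on each, and sum. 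By the telescoping of the boundary terms, together with $\|\epsilon(0)\|_{L^2}=\eta_\ast$ and $\|\epsilon(S^\ast)\|_{L^2}\le\eta_\ast$, one obtains $\int_0^{S^\ast}\|\epsilon(s)\|_{L^2}^2\,ds\le\tfrac{C_0}{2}\eta_\ast+o(1)$, contradicting the definition of $S^\ast$. Hence $S^\ast=\infty$, which is $(\ref{6.2})$.

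For $(\ref{6.4})$ I would repeat the same bootstrap but started at $s=s_j$, with $\eta_\ast$ replaced by $\delta_j:=\|\epsilon(s_j)\|_{L^2}=2^{-j}\eta_\ast$; the Morawetz boundary term at $s_j$ is then $O(\delta_j)$. Letting the right endpoint run to $+\infty$ along a sequence $s\to\infty$ on which $\|\epsilon(s)\|_{L^2}\to0$ — such a sequence exists by Theorem~\ref{t2.1} — kills the right boundary term and the $(\,\cdot\,)^{-8}$ errors, giving $\int_{s_j}^\infty\|\epsilon(s)\|_{L^2}^2\,ds\lesssim\delta_j=2^{-j}\eta_\ast$. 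The existence of $s_j$ is exactly the intermediate value theorem applied to the continuous function $s\mapsto\|\epsilon(s)\|_{L^2}$, which takes the value $\eta_\ast$ at $s=0$ and values arbitrarily close to $0$ along the sequence furnished by Theorem~\ref{t2.1}; monotonicity of $j\mapsto s_j$ is automatic.

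The step I expect to be the main obstacle is the construction of the partition of $[0,S^\ast]$ (resp.\ of $[s_j,\infty)$) so that Proposition~\ref{p5.4} genuinely applies on each piece: one needs pieces that are long in $s$ (length $>\eta_\ast^{-1}$) yet on which $\lambda$ varies by at most the $\tfrac1{100}$-th power of the length, and one needs the sum of the errors $(\sigma'-\sigma)^{-8}$ over the partition to stay $\ll\eta_\ast$ (choosing each piece of length comparable to $\max\{\eta_\ast^{-1},(S^\ast)^{1/2}\}$ makes this sum negligible). Controlling the variation of $\lambda$ uses the modulation estimate for $\tfrac{d}{ds}\lambda$ (the $s$-analogue of $(\ref{4.19})$) fed by the a priori bound $(\ref{6.2})$ being bootstrapped; in the regime where the scale genuinely grows, e.g.\ the self-similar rate $\lambda(t)\sim t^{1/2}$, Proposition~\ref{p5.4} cannot be applied at all and that case must be disposed of by other means (it produces the pseudoconformal transform of a soliton). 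Keeping the telescoped boundary terms scale-free — which is where the invariance remark in the first paragraph is essential — is what delivers the linear-in-$\eta_\ast$ bound $(\ref{6.2})$ and, uniformly, the whole family $(\ref{6.4})$.
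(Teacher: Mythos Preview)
Your telescoping observation for the boundary terms $(\epsilon_2,\Lambda Q)_{L^2}$ is correct, and the scale-invariance remark is to the point. The gap is precisely where you flag it: controlling the oscillation of $\lambda$ on the pieces of the partition. Your proposed fix does not close.

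Feeding the bootstrap hypothesis $\int_0^{S^\ast}\|\epsilon\|_{L^2}^2\,ds\le C_0\eta_\ast$ into $|\lambda_s/\lambda|\lesssim\|\epsilon\|_{L^2}$ yields, via Cauchy--Schwarz, only $\int_{\text{piece}}|\lambda_s/\lambda|\,ds\lesssim L^{1/2}\eta_\ast^{1/2}$ on a piece of length $L$. With your choice $L\sim(S^\ast)^{1/2}$ this gives $(S^\ast)^{1/4}\eta_\ast^{1/2}$, which must stay below $\tfrac{1}{100}\ln L$ for Proposition~\ref{p5.4} to apply; that fails once $S^\ast\gg\eta_\ast^{-2}|\ln\eta_\ast|^{4}$, and nothing in the bootstrap bounds $S^\ast$ from above. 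Shortening the pieces to $L\sim\eta_\ast^{-1}$ (where the crude bound $\|\epsilon\|\le\eta_\ast$ does control $\lambda$) makes the number of pieces $\sim S^\ast\eta_\ast$, and the accumulated $T^{-8}$ errors total $\sim S^\ast\eta_\ast^{9}$, again not $\ll\eta_\ast$ in general. No single choice of $L$ handles both constraints uniformly in $S^\ast$. Your escape clause --- disposing of genuine growth of $\lambda$ ``by other means'' --- is circular here: that regime is ruled out later (Proposition~\ref{p7.1}) \emph{using} Proposition~\ref{p6.1}.

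The paper does not telescope. It runs a geometric induction on the interval length: set $T_\ast=\eta_\ast^{-1}$ and $J\sim|\ln\eta_\ast|^{1/4}$, so that on intervals of length $JT_\ast$ the crude bound $\|\epsilon\|\le\eta_\ast$ alone controls $\lambda$ (since $J\ll\ln T_\ast$). Instead of summing, the paper exploits the freedom in the endpoints of Proposition~\ref{p5.4} to replace $\|\epsilon(\sigma)\|_{L^2}$ by the infimum over an adjacent $JT_\ast$-block, bounded by $(JT_\ast)^{-1/2}A^{1/2}$ with $A=\sup_a\int_{s'+aJT_\ast}^{s'+(a+1)JT_\ast}\|\epsilon\|^2\,ds$; this closes as $A\lesssim\|\epsilon(s')\|_{L^2}+(JT_\ast)^{-1/2}A^{1/2}$, hence $A\lesssim J^{-1}\eta_\ast$ past $s_{j_\ast}$ and, by H\"older, $\int\|\epsilon\|\,ds\lesssim1$ on every $JT_\ast$-block. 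It is this $L^1$ bound --- not the $L^2$ bound you are bootstrapping --- that propagates $\lambda$-control to blocks of length $J^2T_\ast$, where one repeats, climbing to $J^nT_\ast$ for all $n$. The joint upgrade of the $L^2$ and $L^1$ integrals of $\|\epsilon\|$ at each stage of this induction is the missing ingredient in your scheme.
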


\begin{proof}
Set $T_{\ast} = \frac{1}{\eta_{\ast}}$ and suppose that $T_{\ast}$ is sufficiently large such that Proposition $\ref{p6.1}$ holds. Then by $(\ref{6.1})$, for any $s' \geq 0$,
\begin{equation}\label{6.5}
|\sup_{s \in [s', s' + T_{\ast}]} \ln(\lambda(s)) - \inf_{s \in [s', s' + T_{\ast}]} \ln(\lambda(s))| \lesssim 1,
\end{equation}
with implicit constant independent of $s' \geq 0$. Let $J$ be the largest dyadic integer that satisfies
\begin{equation}\label{6.6}
J = 2^{j_{\ast}} \leq -\ln(\eta_{\ast})^{1/4}.
\end{equation}
By $(\ref{6.5})$ and the triangle inequality,
\begin{equation}\label{6.7}
\aligned
|\sup_{s \in [s', s' + J T_{\ast}]} \ln(\lambda(s)) - \inf_{s \in [s', s' + J T_{\ast}]} \ln(\lambda(s))| \lesssim J,
\endaligned
\end{equation}
and therefore,
\begin{equation}\label{6.8}
\frac{\sup_{s \in [s', s' + 3 J T_{\ast}]} \lambda(s)}{\inf_{s \in [s', s' + 3JT^{\ast}]} \lambda(s)} \lesssim T_{\ast}^{\frac{1}{500}}.
\end{equation}
Rescale so that
\begin{equation}\label{6.9}
1 \leq \lambda(s) \leq  T_{\ast}^{\frac{1}{500}}, \qquad \text{for any} \qquad s \in [s', s' + 3J T_{\ast}].
\end{equation}

Utilizing Proposition $\ref{p5.4}$ on $[s', s' + J T_{\ast}]$, for any $s' \geq 0$,
\begin{equation}\label{6.10}
\int_{s'}^{s' + J T_{\ast}} \| \epsilon(s) \|_{L^{2}}^{2} ds \lesssim \| \epsilon(s') \|_{L^{2}} + \| \epsilon(s' + J T_{\ast}) \|_{L^{2}} + O(\frac{1}{J^{8} T_{\ast}^{8}}).
\end{equation}
Note that the left hand side of $(\ref{6.10})$ is scale invariant.

Moreover, for any $s' > J T_{\ast}$,
\begin{equation}\label{6.11}
\int_{s'}^{s' + J T_{\ast}} \| \epsilon(s) \|_{L^{2}}^{2} ds \lesssim \inf_{s \in [s' - J T_{\ast}, s']} \| \epsilon(s) \|_{L^{2}} + \inf_{s \in [s' + J T_{\ast}, s' + 2J T_{\ast}]} \| \epsilon(s) \|_{L^{2}} + O(\frac{1}{J^{8} T_{\ast}^{8}}).
\end{equation}
In particular, for a fixed $s' \geq 0$,
\begin{equation}\label{6.12}
\sup_{a > 0} \int_{s' + a J T_{\ast}}^{s' + (a + 1) J T_{\ast}} \| \epsilon(s) \|_{L^{2}}^{2} \lesssim \frac{1}{J^{1/2} T_{\ast}^{1/2}} (\sup_{a \geq 0} \int_{s' + a J T_{\ast}}^{s' + (a + 1) J T_{\ast}} \| \epsilon(s) \|_{L^{2}}^{2} ds)^{1/2} + O(\frac{1}{J^{8} T_{\ast}^{8}}).
\end{equation}
Meanwhile, when $a = 0$,
\begin{equation}\label{6.13}
 \int_{s'}^{s' + J T_{\ast}} \| \epsilon(s) \|_{L^{2}}^{2} \lesssim \| \epsilon(s') \|_{L^{2}} + \frac{1}{J^{1/2} T_{\ast}^{1/2}} (\sup_{a \geq 0} \int_{s' + a J T_{\ast}}^{s' + (a + 1) J T_{\ast}} \| \epsilon(s) \|_{L^{2}}^{2} ds)^{1/2} + O(\frac{1}{J^{8} T_{\ast}^{8}}).
\end{equation}
Therefore, taking $s' = s_{j_{\ast}}$,
\begin{equation}\label{6.14}
\sup_{a \geq 0} \int_{s_{j_{\ast}} + a J T_{\ast}}^{s_{j_{\ast}} + (a + 1) J T_{\ast}} \| \epsilon(s) \|_{L^{2}}^{2} ds \lesssim 2^{-j_{\ast}} \eta_{\ast} + O(2^{-8 j_{\ast}} \eta_{\ast}^{8}).
\end{equation}
Then by the triangle inequality,
\begin{equation}\label{6.15}
\sup_{s' \geq s_{j_{\ast}}} \int_{s'}^{s' + J T_{\ast}} \| \epsilon(s) \|_{L^{2}}^{2} ds \lesssim 2^{-j_{\ast}} \eta_{\ast},
\end{equation}
and by H{\"o}lder's inequality,
\begin{equation}\label{6.16}
\sup_{s' \geq s_{j_{\ast}}} \int_{s'}^{s' + J T_{\ast}} \| \epsilon(s) \|_{L^{2}} ds \lesssim 1.
\end{equation}

Repeating this argument, Proposition $\ref{p6.1}$ can be proved by induction. Indeed, fix a constant $C < \infty$ and suppose that there exists a positive integer $n_{0}$ such that for all integers $0 \leq n \leq n_{0}$,
\begin{equation}\label{6.17}
\sup_{s' \geq s_{nj_{\ast}}} \int_{s'}^{s' + J^{n} T_{\ast}} \| \epsilon(s) \|_{L^{2}} ds \leq C, \qquad \sup_{s' \geq s_{nj_{\ast}}} \int_{s'}^{s' + J^{n} T_{\ast}} \| \epsilon(s) \|_{L^{2}}^{2} ds \leq C J^{-n} \eta_{\ast}.
\end{equation}
Then for $s' \geq s_{n j_{\ast}}$,
\begin{equation}\label{6.18}
\frac{\sup_{s \in [s', s' + 3 J^{n + 1} T_{\ast}]} \lambda(s)}{\inf_{s \in [s', s' + 3 J^{n + 1} T_{\ast}]} \lambda(s)} \lesssim T_{\ast}^{\frac{1}{500}}.
\end{equation}

Then by Proposition $\ref{p5.4}$,
\begin{equation}\label{6.19}
\sup_{s' \geq s_{(n + 1) j_{\ast}}} \int_{s'}^{s' + J^{n + 1} T_{\ast}} \| \epsilon(s) \|_{L^{2}}^{2} ds \leq C J^{-(n + 1)} \eta_{\ast},
\end{equation}
and by H{\"o}lder's inequality,
\begin{equation}\label{6.20}
\sup_{s' \geq s_{(n + 1) j_{\ast}}} \int_{s'}^{s' + J^{n + 1} T_{\ast}} \| \epsilon(s) \|_{L^{2}} ds \leq C.
\end{equation}
Therefore, $(\ref{6.17})$ holds for any integer $n > 0$.\medskip

Now take any $j \in \mathbb{Z}$ and suppose $n j_{\ast} < j \leq (n + 1) j_{\ast}$. Then $(\ref{6.19})$ holds on $[s_{j} + a J^{n + 1} T_{\ast}, s_{j} + (a + 1) J^{n + 1} T_{\ast}]$ for any $a \geq 0$, so
by Proposition $\ref{p5.4}$,
\begin{equation}\label{6.21}
\sup_{a \geq 0} \int_{s_{j} + a J^{n + 1} T_{\ast}}^{s_{j} + (a + 1) J^{n + 1} T_{\ast}} \| \epsilon(s) \|_{L^{2}}^{2} ds \lesssim 2^{-j} \eta_{\ast},
\end{equation}
and therefore by H{\"o}lder's inequality, for any $s' \geq s_{j}$,
\begin{equation}\label{6.22}
\sup_{s' \geq s_{j}} \int_{s'}^{s' + 2^{j} T_{\ast}} \| \epsilon(s) \|_{L^{2}} ds \lesssim 1,
\end{equation}
with bound independent of $j$. Inequalities $(\ref{6.21})$ and $(\ref{6.22})$ imply that the conditions of Proposition $\ref{p5.4}$ hold on $[s', s' + 3 \cdot 2^{j} J T_{\ast}]$ for any $s' \geq s_{j}$, so
\begin{equation}\label{6.23}
\int_{s_{j}}^{s_{j} + 2^{j} J T_{\ast}} \| \epsilon(s) \|_{L^{2}}^{2} \lesssim 2^{-j} \eta_{\ast},
\end{equation}
and therefore, by the mean value theorem,
\begin{equation}\label{6.24}
\inf_{s \in [s_{j}, s_{j} + 2^{j} J T_{\ast}]} \| \epsilon(s) \|_{L^{2}} \lesssim 2^{-j} \eta_{\ast} J^{-1/2},
\end{equation}
which implies
\begin{equation}\label{6.25}
s_{j + 1} \in [s_{j}, s_{j} + 2^{j} J T_{\ast}].
\end{equation}
Therefore, by $(\ref{6.23})$ and H{\"o}lder's inequality,
\begin{equation}\label{6.26}
\int_{s_{j}}^{s_{j + 1}} \| \epsilon(s) \|_{L^{2}}^{2} ds \lesssim 2^{-j} \eta_{\ast}, \qquad \text{and} \qquad \int_{s_{j}}^{s_{j + 1}} \| \epsilon(s) \|_{L^{2}} ds \lesssim 1,
\end{equation}
with constant independent of $j$. Summing in $j$ gives $(\ref{6.2})$ and $(\ref{6.4})$.
\end{proof}

Now then, for any $1 < p < \infty$, $(\ref{6.26})$ implies
\begin{equation}\label{6.27}
(\int_{s_{j}}^{s_{j + 1}} \| \epsilon(s) \|_{L^{2}}^{p} ds) \lesssim \eta_{\ast}^{p - 1} 2^{-j(p - 1)}, 
\end{equation}
which implies that $\| \epsilon(s) \|_{L^{2}}$ belongs to $L_{s}^{p}$ for any $p > 1$, but not $L_{s}^{1}$.

Comparing $(\ref{6.27})$ to the pseudoconformal transformation of the soliton, for $0 < t < 1$,
\begin{equation}\label{6.28}
\lambda(t) \sim t, \qquad \text{and} \qquad \| \epsilon(t) \|_{L^{2}} \sim t,
\end{equation}
so
\begin{equation}\label{6.29}
\int_{0}^{1} \| \epsilon(t) \|_{L^{2}} \lambda(t)^{-2} dt = \infty,
\end{equation}
but for any $p > 1$,
\begin{equation}\label{6.30}
\int_{0}^{1} \| \epsilon(t) \|_{L^{2}}^{p} \lambda(t)^{-2} dt < \infty.
\end{equation}
For the soliton, $\epsilon(s) \equiv 0$ for any $s \in \mathbb{R}$, so obviously, $\| \epsilon(s) \|_{L^{2}} \in L_{s}^{p}$ for $1 \leq p \leq \infty$.

\section{Monotonicity of $\lambda$}
Now prove monotonicity of $\lambda$, as in the mass-critical problem.

\begin{proposition}\label{p7.1}
For any $s \geq 0$, let
\begin{equation}\label{7.1}
\tilde{\lambda}(s) = \inf_{\tau \in [0, s]} \lambda(\tau).
\end{equation}
Then for any $s \geq 0$,
\begin{equation}\label{7.2}
1 \leq \frac{\lambda(s)}{\tilde{\lambda}(s)} \leq 3.
\end{equation}
\end{proposition}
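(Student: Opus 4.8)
The plan is to pass to the rescaled time $s$ with $\frac{ds}{dt}=\lambda(t)^{-2}$, in which the modulation decomposition of Theorem~$\ref{t4.1}$ reads $u(t,x)=e^{i\gamma(s)}\lambda(s)^{-1}(Q+\epsilon)(s,x/\lambda(s))$, with $\epsilon=\epsilon_1+i\epsilon_2$ satisfying the orthogonality relations $(\ref{4.18})$ and $\|\epsilon(s)\|_{L^2}\le C\eta_\ast$ for all $s\ge 0$. The lower bound in $(\ref{7.2})$ is immediate, since $\tilde\lambda(s)=\inf_{\tau\in[0,s]}\lambda(\tau)\le\lambda(s)$; the content is the upper bound $\lambda(s)\le 3\tilde\lambda(s)$, which I would establish by a continuity/bootstrap argument.

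First I would derive the modulation law. Differentiating the orthogonality relations $(\ref{4.18})$ in $s$ and substituting the evolution equation for $\epsilon$ — the linearization of $(\ref{1.1})$ about $Q$, with the nonlocal potentials $A_\theta[Q+\epsilon]$ and $A_0[Q+\epsilon]$ expanded exactly as in Section~4 — the terms linear in $\epsilon$ are paired against the fixed rapidly decreasing profiles $\psi$ and $i\psi$ and are therefore $O(\|\epsilon(s)\|_{L^2})$, while the remaining contributions are quadratic in $\epsilon$. Solving the resulting $2\times 2$ system, nonsingular by the transversality condition of Section~4 (cf.~$(\ref{4.16})$), yields the crude estimate $|\lambda_s/\lambda|+|\gamma_s|\lesssim\|\epsilon(s)\|_{L^2}$ and, more precisely, an identity of the form
\begin{equation}
\frac{\lambda_s}{\lambda}=-c_0\,(\epsilon_2(s),\,Q+x\cdot\nabla Q)_{L^2}+\mathcal R(s),\qquad c_0>0,
\end{equation}
where $\mathcal R(s)$ is quadratic in $\epsilon(s)$, so that $\int_{s_1}^{s_2}|\mathcal R|\,ds\lesssim\int_{s_1}^{s_2}\big(\|\epsilon\|_{L^2}^2+\|\epsilon\|_{L^4}^4\big)\,ds$, the last term being absorbed into $\int\|\epsilon\|_{L^2}^2$ by the Strichartz bound $(\ref{5.19})$. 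The refined identity matters because its right-hand side is precisely the boundary functional occurring in the Morawetz/virial estimate of Proposition~$\ref{p5.4}$, and because at a critical point of $\lambda$ it forces $(\epsilon_2,Q+x\cdot\nabla Q)$ to be quadratically small in $\|\epsilon\|_{L^2}$.

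For the upper bound, suppose $(\ref{7.2})$ fails. Since $\lambda$ and $\tilde\lambda$ are continuous and positive and $\lambda(0)/\tilde\lambda(0)=1$, there is a first $s^{\ast\ast}$ with $\lambda(s^{\ast\ast})=3\tilde\lambda(s^{\ast\ast})$. Writing $\sigma:=\tilde\lambda(s^{\ast\ast})=\inf_{[0,s^{\ast\ast}]}\lambda$ and rescaling so that $\sigma=1$, choose $s_1\le s^{\ast\ast}$ with $\lambda(s_1)=1$ and a later local maximum $s_2$ of $\lambda$ with $\lambda(s_2)\ge 2$; then $1\le\lambda\le 3$ on $[s_1,s_2]$, and by the previous step $(\epsilon_2(s_1),Q+x\cdot\nabla Q)$ and $(\epsilon_2(s_2),Q+x\cdot\nabla Q)$ are both $O(\eta_\ast^2)$. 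From $\ln 2\le\int_{s_1}^{s_2}(\lambda_s/\lambda)\,ds\lesssim\int_{s_1}^{s_2}\|\epsilon(s)\|_{L^2}\,ds$ and Cauchy--Schwarz one gets $T:=s_2-s_1\gtrsim\eta_\ast^{-1}$, so that for $\eta_\ast$ small the hypotheses of Proposition~$\ref{p5.4}$ ($T>\eta_\ast^{-1}$ and $1\le\lambda\le T^{1/100}$) hold on the physical-time interval corresponding to $[s_1,s_2]$, giving $\int_{s_1}^{s_2}\|\epsilon(s)\|_{L^2}^2\,ds\lesssim\eta_\ast^2+O(T^{-8})$. Feeding this back into the $L^p$ bounds of Proposition~$\ref{p6.1}$ — which control $\int_{s_1}^{s_2}\|\epsilon\|_{L^2}\,ds$ as well, dyadic block by dyadic block on intervals where $\lambda$ is comparable to a constant (using the crude bound and $(\ref{6.26})$) — contradicts $\int_{s_1}^{s_2}\|\epsilon\|_{L^2}\,ds\ge\ln 2$ once $\eta_\ast$ is small. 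This proves $(\ref{7.2})$; the constant $3$ is not special, any constant $>1$ works.

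The main obstacle is the second step carried out honestly for the nonlocal equation $(\ref{1.1})$: one must check that after expanding $A_\theta[Q+\epsilon]$ and $A_0[Q+\epsilon]$ as in $(\ref{4.4})$--$(\ref{4.6})$, every linear-in-$\epsilon$ piece of the $\epsilon$-equation that survives the pairing against $\psi,i\psi$ remains $O(\|\epsilon\|_{L^2})$ — which holds because the nonlocal operators are bounded on the relevant weighted spaces — and that the virial boundary term $(\epsilon_2,Q+x\cdot\nabla Q)$ of Proposition~$\ref{p5.4}$ is both controlled by $\|\epsilon\|_{L^2}$ and agrees, modulo $s$-integrable errors, with the functional driving $\lambda_s/\lambda$, so that the cost of a fixed multiplicative change of $\lambda$ is charged against a quantity that Proposition~$\ref{p6.1}$ forces to be small. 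Up to this nonlocal bookkeeping, the argument is identical to the monotonicity lemma for the mass-critical nonlinear Schr{\"o}dinger equation.
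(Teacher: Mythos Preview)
Your contradiction does not close. From Proposition~\ref{p5.4} with $O(\eta_\ast^2)$ boundary terms you get $\int_{s_1}^{s_2}\|\epsilon\|_{L^2}^2\,ds\lesssim\eta_\ast^2$, and from the crude modulation bound you get $\int_{s_1}^{s_2}\|\epsilon\|_{L^2}\,ds\gtrsim 1$. These two estimates are perfectly compatible: take $\|\epsilon\|\equiv\eta_\ast^2$ on an interval of length $T\sim\eta_\ast^{-2}$. Proposition~\ref{p6.1} does not help --- $(\ref{6.26})$ only gives $\int_{s_j}^{s_{j+1}}\|\epsilon\|_{L^2}\,ds\lesssim 1$ on each dyadic block, a bounded quantity, not $o(1)$; summing such blocks cannot force $\int\|\epsilon\|_{L^2}<\ln 2$. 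There is also no reason that differentiating the orthogonality conditions against $\psi,i\psi$ produces precisely $(\epsilon_2,\Lambda Q)$ as the leading term in $\lambda_s/\lambda$; the linear functional one gets is built from $L^\ast\psi$, not $\Lambda Q$.

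The paper exploits a \emph{sign}, not smallness. Assuming $\lambda(s_+)/\lambda(s_-)=e$, it computes the virial quantity $\frac{d}{ds}(\epsilon_1,|x|^2 Q)$ directly from the $\epsilon$-equation $(\ref{7.9})$, obtaining $(\ref{7.13})$; the point is that the term $\frac{\lambda_s}{\lambda}(\Lambda Q,|x|^2 Q)=-\frac{\lambda_s}{\lambda}\|xQ\|_{L^2}^2$ integrates to the fixed nonzero constant $-\|xQ\|_{L^2}^2$. After controlling the remaining terms with Proposition~\ref{p6.1} this yields $(\ref{7.14})$, i.e.\ $\int_{s_-}^{s_+}(\epsilon_2,\Lambda Q)\,ds=-\tfrac14\|xQ\|_{L^2}^2+O(\eta_\ast)<0$, so there is an $s'$ with $(\epsilon_2(s'),\Lambda Q)<0$. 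This strict negativity of the \emph{left} boundary term in $(\ref{5.46})$ is what drives the subsequent induction: taking the right endpoint at $s_{j+k}$ where $\|\epsilon(s_{j+k})\|=2^{-j-k}\eta_\ast$, the estimate $(\ref{5.46})$ becomes $\int_{s'}^{s_{j+k}}\|\epsilon\|^2\le(\text{negative})+C\,2^{-j-k}\eta_\ast+O(T^{-8})$, hence $\int_{s'}^{s_{j+k}}\|\epsilon\|^2\lesssim 2^{-j-k}\eta_\ast$ for every $k$, whence $\int_{s'}^\infty\|\epsilon\|^2=0$ and $u$ is a soliton --- contradicting $\lambda(s_+)/\lambda(s_-)=e$. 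Your refined modulation identity, if correct, could recover the same sign conclusion by integration on $[s_1,s_2]$; but you only use it pointwise at critical points of $\lambda$, which yields smallness of the boundary functional rather than a definite sign, and smallness alone is not enough to close the argument.
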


\begin{proof}
Suppose there exist $0 \leq s_{-} \leq s_{+} < \infty$ satisfying
\begin{equation}\label{7.3}
\frac{\lambda(s_{+})}{\lambda(s_{-})} = e.
\end{equation}
Then $u$ is a soliton solution, which contradicts $(\ref{7.3})$. Recall that
\begin{equation}\label{7.4}
\epsilon(t, x) = e^{i \gamma(t)} \lambda(t) u(t, \lambda(t) x) - Q(x).
\end{equation}
Taking the derivative of $(\ref{7.4})$ in time and plugging in
\begin{equation}\label{7.5}
\partial_{t} u = i \Delta u + i g |u|^{2} u - i A_{0}[u] u - \frac{2mi}{r^{2}} A_{\theta}[u] u - i \frac{A_{\theta}[u]^{2}}{r^{2}} u,
\end{equation}
and using the formula $\Lambda = 1 + x \cdot \nabla$,
\begin{equation}\label{7.6}
\aligned
\epsilon_{s} = i \gamma_{s} [\epsilon + Q] + \frac{\lambda_{s}}{\lambda} \Lambda [\epsilon + Q] + i \Delta [\epsilon + Q] + gi |\epsilon + Q|^{2} (\epsilon + Q) - i A_{0}[\epsilon + Q](\epsilon + Q) \\ - \frac{2mi}{r^{2}} A_{\theta}[\epsilon + Q](\epsilon + Q) - \frac{i}{r^{2}} A_{\theta}[\epsilon + Q]^{2} (\epsilon + Q).
\endaligned
\end{equation}
Now then,
\begin{equation}\label{7.7}
\Delta Q + g |Q|^{2} Q - i A_{0}[Q] Q - \frac{2m}{r^{2}} A_{\theta}[Q] Q - \frac{1}{r^{2}} A_{\theta}[Q] Q = \alpha Q.
\end{equation}
Plugging $(\ref{7.7})$ into $(\ref{7.6})$,
\begin{equation}\label{7.8}
\aligned
\epsilon_{s} = i \gamma_{s} \epsilon + i (\gamma_{s} + \alpha) Q + \frac{\lambda_{s}}{\lambda} \Lambda Q + \frac{\lambda_{s}}{\lambda} \Lambda \epsilon + i \Delta \epsilon + gi \{ |\epsilon + Q|^{2} (\epsilon + Q) - |Q|^{2} Q \} \\ - i \{ A_{0}[\epsilon + Q](\epsilon + Q) - A_{0}[Q] Q \} - \frac{2mi}{r^{2}} \{ A_{\theta}[\epsilon + Q](\epsilon + Q) - A_{\theta}[Q] Q \} \\ - \frac{i}{r^{2}} \{ A_{\theta}[\epsilon + Q]^{2} (\epsilon + Q) - A_{\theta}[Q] Q \}.
\endaligned
\end{equation}
Now decompose $\epsilon$ into its real and imaginary parts, $\epsilon = \epsilon_{1} + i \epsilon_{2}$. Taking the real parts of both sides of $(\ref{7.8})$,
\begin{equation}\label{7.9}
\aligned
\partial_{s} \epsilon_{1} = -\gamma_{s} \epsilon_{2} + \frac{\lambda_{s}}{\lambda} \Lambda Q + \frac{\lambda_{s}}{\lambda} \Lambda \epsilon_{1} - \Delta \epsilon_{2} - g Q^{2} \epsilon_{2} + O(Q \epsilon^{2} + \epsilon^{3}) \\  + \frac{1}{r^{2}} A_{\theta}[Q] \epsilon_{2} + \frac{2m}{r^{2}} A_{\theta}[Q] \epsilon_{2} + A_{0}[Q] \epsilon_{2} \\ + O_{m}(\frac{1}{r^{2}} \int_{0}^{r} \{ |\epsilon| Q + |\epsilon|^{2} \} s ds \cdot \epsilon_{2}) + O(\| \epsilon \|_{L^{2}} |\epsilon|).
\endaligned
\end{equation} 

Now compute the virial identity from \cite{merle2005blow},
\begin{equation}\label{7.10}
\aligned
\frac{d}{ds} (\epsilon, |x|^{2} Q) = -\gamma_{s} (\epsilon_{2}, |x|^{2} Q) + \frac{\lambda_{s}}{\lambda} (\Lambda Q, |x|^{2} Q) - (\Delta \epsilon_{2} + g Q^{2} \epsilon_{2} - \frac{1}{r^{2}} A_{\theta}[Q] \epsilon_{2} - \frac{2m}{r^{2}} A_{\theta}[Q] \epsilon_{2} - A_{0} [Q] \epsilon_{2}, |x|^{2} Q) \\ + O(\| \epsilon \|_{L^{2}}^{2} + \| \epsilon \|_{L^{\infty}}^{2}).
\endaligned
\end{equation}
Since
\begin{equation}\label{7.11}
\Delta Q + g Q^{3} - \frac{1}{r^{2}} A_{\theta}[Q] - \frac{2m}{r^{2}} A_{\theta}[Q] - A_{0} [Q] Q = \alpha Q,
\end{equation}
then integrating by parts,
\begin{equation}\label{7.12}
(\Delta \epsilon_{2} + g Q^{2} \epsilon_{2} - \frac{1}{r^{2}} A_{\theta}[Q] \epsilon_{2} - \frac{2m}{r^{2}} A_{\theta}[Q] \epsilon_{2} - A_{0} [Q] \epsilon_{2}, |x|^{2} Q) = (4 \epsilon_{2}, \Lambda Q) + \alpha (\epsilon_{2}, Q).
\end{equation}
Therefore,
\begin{equation}\label{7.13}
\frac{d}{ds} (\epsilon, |x|^{2} Q) = -(\gamma_{s} + \alpha) (\epsilon_{2}, |x|^{2} Q) + \frac{\lambda_{s}}{\lambda} (\Lambda Q, |x|^{2} Q) - 4 (\epsilon_{2}, \Lambda Q) + O(\| \epsilon \|_{L^{2}}^{2} + \| \epsilon \|_{L^{\infty}}^{2}).
\end{equation}

Using Proposition $\ref{p6.1}$, $(\ref{7.13})$, the fundamental theorem of calculus, and the fact that $(|x|^{2} Q, \Lambda Q) = -\| xQ \|_{L^{2}}^{2}$,
\begin{equation}\label{7.14}
\| x Q \|_{L^{2}}^{2} + 4 \int_{s_{-}}^{s_{+}} (\epsilon_{2}, Q + x \cdot \nabla Q)_{L^{2}} = O(\eta_{\ast}).
\end{equation}
Therefore, there exists $s' \in [s_{-}, s_{+}]$ such that
\begin{equation}\label{7.15}
(\epsilon_{2}, Q + x \cdot \nabla Q)_{L^{2}} < 0.
\end{equation}

Since $s' \geq 0$, there exists some $j \geq 0$ such that $s_{j} \leq s' + T_{\ast} < s_{j + 1}$. Using the proof of Proposition $\ref{p6.1}$,
\begin{equation}\label{7.16}
\int_{s'}^{s_{j + 1 + J}} |\frac{\lambda_{s}}{\lambda}| ds \lesssim J.
\end{equation}
Then by Proposition $\ref{p5.4}$, $(\ref{7.16})$ implies
\begin{equation}\label{7.17}
\int_{s'}^{s_{j + 1 + J}} \| \epsilon(s) \|_{L^{2}}^{2} ds \lesssim 2^{-(j + 1 + J)} \eta_{\ast},
\end{equation}
and therefore by definition of $s_{j + 1 + J}$,
\begin{equation}\label{7.18}
\int_{s'}^{s_{j + 1 + J}} \| \epsilon(s) \|_{L^{2}} ds \lesssim 1.
\end{equation}

Arguing by induction, suppose that for some $1 \leq k \leq k_{0}$,
\begin{equation}\label{7.19}
\int_{s'}^{s_{j + k}} \| \epsilon(s) \|_{L^{2}}^{2} ds \lesssim 2^{-j - k} \eta_{\ast},
\end{equation}
and
\begin{equation}\label{7.20}
\int_{s'}^{s_{j + k}} \| \epsilon(s) \|_{L^{2}} ds \lesssim 1,
\end{equation}
with implicit constant independent of $k$. By Proposition $\ref{p6.1}$,
\begin{equation}\label{7.21}
\int_{s'}^{s_{j + k + J}} \| \epsilon(s) \|_{L^{2}}^{2} ds \lesssim 2^{-j - k} \eta_{\ast},
\end{equation}
and
\begin{equation}\label{7.22}
\int_{s'}^{s_{j + k + J}} \| \epsilon(s) \|_{L^{2}} ds \lesssim J.
\end{equation}
Then by Proposition $\ref{p5.4}$,
\begin{equation}\label{7.23}
\int_{s'}^{s_{j + k + J}} \| \epsilon(s) \|_{L^{2}}^{2} ds \lesssim 2^{-j - k - J} \eta_{\ast},
\end{equation}
and
\begin{equation}\label{7.24}
\int_{s'}^{s_{j + k + J}} \| \epsilon(s) \|_{L^{2}} ds \lesssim 1,
\end{equation}
for $1 \leq k \leq k_{0} + J$. Therefore, $(\ref{7.23})$ and $(\ref{7.24})$ hold for any $k$, with implicit constant independent of $k$.

Taking $k \rightarrow \infty$,
\begin{equation}\label{7.25}
\int_{s'}^{\infty} \| \epsilon(s) \|_{L^{2}}^{2} ds = 0,
\end{equation}
which implies that $\epsilon(s) = 0$ for all $s \geq s'$. Therefore, $u$ is a soliton solution.
\end{proof}

\section{Rigidity}
To prove Theorem $\ref{t1.2}$, we prove that if $u$ is global solution, $u$ is a soliton, but if $u$ is a finite time blowup solution, then $u$ is a pseudoconformal transformation of a soliton.

\begin{theorem}\label{t8.1}
If $u$ is a solution to $(\ref{1.1})$ that satisfies $dist(u, \mathcal M) \leq \eta_{\ast}$ for all $t \geq 0$, and furthermore, if
\begin{equation}\label{8.1}
\sup(I) = \infty,
\end{equation}
then $u$ is equal to a soliton solution.
\end{theorem}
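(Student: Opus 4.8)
The plan is to argue by contradiction: suppose $u$ satisfies the hypotheses but is \emph{not} a soliton. First apply the modulation of Theorem \ref{t4.1} to write, in the rescaled frame, $u(t,x)=\lambda(t)^{-1}e^{-i\gamma(t)}(Q+\epsilon)(t,\lambda(t)^{-1}x)$ with $\epsilon(t)\perp\psi,i\psi$ and $\|\epsilon(t)\|_{L^2}\lesssim\eta_{\ast}$, and pass to the rescaled time $s$ with $ds/dt=\lambda^{-2}$. By Proposition \ref{p7.1} the parameter $\lambda$ is bounded above (its running infimum $\tilde\lambda$ satisfies $\lambda\le 3\tilde\lambda$), hence $\lambda^{-2}$ is bounded below and, since $\sup(I)=\infty$, the variable $s$ sweeps out all of $[0,\infty)$. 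Because $u$ is not a soliton we have $\epsilon\not\equiv0$; by the coercivity estimate \eqref{4.15}, $E[Q+\epsilon(t_0)]>0$ for any $t_0$ with $\epsilon(t_0)\ne0$, and since $E[Q+\epsilon(t)]=\lambda(t)^2E[u]$ with $E[u]$ conserved, this forces $E_0:=E[u]>0$ and $\epsilon(t)\ne0$ for every $t$.

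Next I would split on $\ell:=\lim_{s\to\infty}\tilde\lambda(s)=\inf_{s\ge0}\lambda(s)\ge0$. If $\ell>0$ then $\lambda$ is bounded above and below, so $u$ is a minimal-mass almost periodic solution; the additional regularity argument of \cite{liu2016global} (recalled in Section 2) then yields $E[u]=0$, contradicting $E_0>0$ — equivalently, $E[u]=0$ together with \eqref{4.15} forces $\epsilon\equiv0$, i.e.\ $u$ is a soliton, against the assumption. So it remains to rule out $\ell=0$, that is $\lambda(s)\to0$.

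When $\lambda(s)\to0$ the solution concentrates, and the goal is to contradict $\sup(I)=\infty$. Here I would integrate the virial identity \eqref{7.13} on $[0,S]$, using $(\Lambda Q,|x|^2Q)=-\|xQ\|_{L^2}^2$, the modulation bound on $\gamma_s$, and Proposition \ref{p6.1} to control the lower-order terms, obtaining $\ln\lambda(S)=\ln\lambda(0)-\tfrac{4}{\|xQ\|_{L^2}^2}\int_0^S(\epsilon_2(s),Q+x\cdot\nabla Q)_{L^2}\,ds+(\text{bounded in }S)$; the weighted pairings here are finite because $Q$ decays exponentially (a consequence of $\alpha>0$). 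Combining this with the almost-conservation/Morawetz estimate \eqref{5.46} of Proposition \ref{p5.4} and the refined $L^p$ bounds of Proposition \ref{p6.1}, one shows that during the collapse $(\epsilon_2(s),Q+x\cdot\nabla Q)_{L^2}\gtrsim\lambda(s)$ — that is, $\epsilon$ is to leading order the pseudoconformal phase correction $-\tfrac14\lambda r^2Q$ — so that $\tfrac{d}{ds}\lambda^{-1}\gtrsim1$ and therefore $\lambda(s)\lesssim s^{-1}$. Hence $\int_0^\infty\lambda(s)^2\,ds<\infty$; but $\int_0^\infty\lambda(s)^2\,ds=\sup(I)-t(0)=\infty$, a contradiction. (Equivalently, such a $u$ would be a pseudoconformal transformation of a soliton, which has finite lifespan.) With both cases excluded, $u$ must be a soliton.

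The main obstacle is this last step: establishing the lower bound $(\epsilon_2,Q+x\cdot\nabla Q)_{L^2}\gtrsim\lambda$ in the concentrating regime, i.e.\ that the solution resolves into the pseudoconformal blow-up profile. The new difficulty relative to the local (NLS) argument of \cite{dodson2021determination2} is that the nonlocal terms $A_\theta[u]$ and $A_0[u]$ enter the modulation equations \eqref{7.8}--\eqref{7.9} and the virial computation, so the nonlocal long-time estimates of Section 5 (Propositions \ref{p5.5} and \ref{p5.6}) must be propagated through the argument; the exponential decay of $Q$ is used throughout to make the weighted quantities in \eqref{7.13} meaningful.
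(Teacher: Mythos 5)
Your overall frame (modulation, Proposition \ref{p7.1}, the virial identity \eqref{7.13}, and Propositions \ref{p5.4}, \ref{p5.6}, \ref{p6.1}) is the right toolbox, but the proposal has a genuine gap exactly where you flag it: in the concentrating case $\lambda(s)\to 0$ you need the lower bound $(\epsilon_{2},Q+x\cdot\nabla Q)_{L^{2}}\gtrsim \lambda(s)$, i.e.\ that $\epsilon$ resolves to leading order into the pseudoconformal phase correction $-\tfrac{i}{4}\lambda r^{2}Q$. Nothing in Sections 5--7 produces such a structural resolution of $\epsilon$: \eqref{5.46} and Proposition \ref{p6.1} only give integrated smallness of $\|\epsilon\|_{L^{2}}$, and \eqref{7.13} only relates the time average of the pairing to $\ln\lambda$; none of these identifies the sign or size of $(\epsilon_{2},\Lambda Q)$ pointwise in $s$. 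Establishing that bound is essentially a blow-up profile classification (Merle/Rapha\"el-type analysis), which is a substantially harder statement than the theorem itself, so the contradiction in the case $\lambda\to 0$ is not closed. (A smaller caveat: in the case $\inf\lambda>0$ you invoke almost periodicity plus the additional-regularity argument of \cite{liu2016global} to get $E[u]=0$; that route can work, but it silently uses that $u$ does not scatter forward in time and the reduction to the three enemies, which deserves justification rather than a one-line appeal.)

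The paper never needs any profile resolution and treats both cases by one computation. It sets $I(k)=\{s\geq 0: \tilde{\lambda}(s)\sim 2^{-k}\}$, notes that $\sup(I)=\infty$ forces $\sum_{k}2^{-2k}|I(k)|=\infty$, and selects long intervals $I(k_{n})=[a_{n},b_{n}]$ (or, when $\inf\lambda>0$, an interval of length $2^{4k_{n}}$). Integrating \eqref{7.13} over the first and last quarters of $[a_{n},b_{n}]$ and using that these quarters have length $\gtrsim 2^{2k_{n}}k_{n}^{-2}$ produces times $s_{\pm}$ with $|(\epsilon_{2},Q+x\cdot\nabla Q)(s_{\pm})|\lesssim 2^{-2k_{n}}k_{n}^{2}$; feeding these into \eqref{5.46} gives $\int_{s_{-}}^{s_{+}}\|\epsilon\|_{L^{2}}^{2}\,ds\lesssim 2^{-2k_{n}}k_{n}^{2}$, the intermediate value theorem yields a time with $\|\epsilon\|_{L^{2}}^{2}\lesssim 2^{-4k_{n}}k_{n}^{4}$, and Proposition \ref{p5.6} at frequency $N=2^{\frac{4}{3}k_{n}}$ (after rescaling $\lambda(s_{-})=1$ and back) gives $E(P_{\leq N}u)\lesssim 2^{\frac{8}{3}k_{n}}2^{-4k_{n}}k_{n}^{4}\to 0$. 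Hence $E(u)=0$, and the variational characterization of zero-energy, minimal-mass $m$-equivariant functions (Proposition 3.7 of \cite{li2022threshold}, quoted in Section 2) identifies $u$ as a soliton. To repair your argument you should replace the pseudoconformal lower bound by this interval-selection scheme, which converts the divergence $\sum 2^{-2k}|I(k)|=\infty$ directly into smallness of the energy rather than into a decay rate for $\lambda$.
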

\begin{proof}
For any integer $k \geq 0$, let
\begin{equation}\label{8.2}
I(k) = \{ s \geq 0 : 2^{-k + 2} \leq \tilde{\lambda}(s) \leq 2^{-k + 3} \}.
\end{equation}
Then by Proposition $\ref{p7.1}$,
\begin{equation}\label{8.3}
2^{-k} \leq \lambda(s) \leq 2^{-k + 3},
\end{equation}
for all $s \in I(k)$. The fact that $\sup(I) = \infty$ implies that
\begin{equation}\label{8.4}
\sum 2^{-2k} |I(k)| = \infty.
\end{equation}
If $\lambda(s) \rightarrow 0$ as $s \rightarrow \infty$, then there exists a sequence $k_{n} \nearrow \infty$ such that
\begin{equation}\label{8.5}
|I(k_{n})| 2^{-2k_{n}} \geq \frac{1}{k_{n}^{2}}, \qquad \text{and} \qquad I(k) \leq 2^{2k_{n}} k_{n}^{-2}, \qquad \forall k \leq k_{n}.
\end{equation}
If $\inf_{s \geq 0} \lambda(s) > 0$, then there exists some $s_{0}$ such that
\begin{equation}\label{8.6}
\frac{\sup_{s \geq s_{0}} \lambda(s)}{\inf_{s \geq s_{0}} \lambda(s)} \leq 2.
\end{equation}

Now let $I(k_{n}) = [a_{n}, b_{n}]$. In the case of $(\ref{8.6})$, let $a_{n} = s_{0}$ and $b_{n} = s_{0} + 2^{4 k_{n}}$. By $(\ref{7.13})$,
\begin{equation}\label{8.7}
|\int_{a_{n}}^{a_{n} + \frac{b_{n} - a_{n}}{4}} (\epsilon_{2}, Q + x \cdot \nabla Q) ds \lesssim 1|, \qquad |\int_{b_{n} - \frac{b_{n} - a_{n}}{4}}^{b_{n}} (\epsilon_{2}, Q + x \cdot \nabla Q) ds \lesssim 1|.
\end{equation}
Therefore, there exists $s_{-} \in [a_{n}, a_{n} + \frac{b_{n} - a_{n}}{4}]$, $s_{+} \in [b_{n} - \frac{b_{n} - a_{n}}{4}, b_{n}]$ such that
\begin{equation}\label{8.8}
|(\epsilon_{2}, Q + x \cdot \nabla Q)(s_{-})|, \qquad |(\epsilon_{2}, Q + x \cdot \nabla Q)(s_{+})| \lesssim 2^{-2k_{n}} k_{n}^{2}.
\end{equation}

Plugging $(\ref{8.8})$ into $(\ref{5.46})$,
\begin{equation}\label{8.9}
\int_{s_{-}}^{s_{+}} \| \epsilon(s) \|_{L^{2}}^{2} ds \lesssim 2^{-2k_{n}} k_{n}^{2}.
\end{equation}
Again by the intermediate value theorem,
\begin{equation}\label{8.10}
\inf_{s \in [s_{-}, s_{+}]} \| \epsilon(s) \|_{L^{2}}^{2} \lesssim 2^{-4k_{n}} k_{n}^{4}.
\end{equation}
After rescaling $\lambda(s_{-}) = 1$, plugging $(\ref{8.10})$ into Proposition $\ref{p5.6}$, and then rescaling back,
\begin{equation}\label{8.11}
E(P_{\leq \frac{4}{3} k_{n}} u)(0) \lesssim 2^{\frac{8 k_{n}}{3}} 2^{-4 k_{n}} k_{n}^{4} \rightarrow 0, \qquad \text{as} \qquad n \rightarrow \infty.
\end{equation}
Therefore, $E(u) = 0$.
\end{proof}

Now turn to a finite time blowup solution. Suppose without loss of generality that $\sup(I) = 0$, and
\begin{equation}\label{8.12}
\sup_{-1 < t < 0} \| \epsilon(t) \|_{L^{2}} \leq \eta_{\ast}.
\end{equation}
Then decomposing $u$,
\begin{equation}\label{8.13}
u(t,x) = \frac{e^{-i \gamma(t)}}{\lambda(t)} Q(\frac{x}{\lambda(t)}) + \frac{e^{-i \gamma(t)} }{\lambda(t)} \epsilon(t,\frac{x }{\lambda(t)}).
\end{equation}
Then apply the pseudoconformal transformation to $u(t,x)$. For $-\infty < t < -1$, let
\begin{equation}\label{8.14}
\aligned
v(t,x) = \frac{1}{t} \overline{u(\frac{1}{t}, \frac{x}{t})} e^{i |x|^{2}/4t} = \frac{1}{t} \frac{e^{i \gamma(1/t)} }{\lambda(1/t)} Q(\frac{x }{t \lambda(1/t)}) e^{i |x|^{2}/4t} + \frac{1}{t} \frac{e^{i \gamma(1/t)}}{\lambda(1/t)} \overline{\epsilon(\frac{1}{t}, \frac{x }{t \lambda(1/t)})} e^{i |x|^{2}/4t}.
\endaligned
\end{equation}
Since the $L^{2}$ norm is preserved by the pseudoconformal transformation,
\begin{equation}\label{8.15}
\aligned
\lim_{t \searrow -\infty} \| \frac{1}{t} \frac{e^{i \gamma(1/t)} }{\lambda(1/t)} \overline{\epsilon(\frac{1}{t}, \frac{x }{t \lambda(1/t)})} e^{i |x|^{2}/4t} \|_{L^{2}} = 0, \qquad \text{and} \\ 
\qquad \sup_{-\infty < t < -1}  \| \frac{1}{t} \frac{e^{i \gamma(1/t)} }{\lambda(1/t)} \overline{\epsilon(\frac{1}{t}, \frac{x}{t \lambda(1/t)})} e^{i x^{2}/4t} \|_{L^{2}} \leq \eta_{\ast}.
\endaligned
\end{equation}

Since
\begin{equation}\label{8.16}
 \frac{1}{t} \frac{e^{i \gamma(1/t)} }{\lambda(1/t)} Q(\frac{x}{t \lambda(1/t)})
\end{equation}
is in the form of $\frac{e^{i \tilde{\gamma}(t)} }{\tilde{\lambda}(t)} Q(\frac{x}{\tilde{\lambda}(t)})$, it only remains to estimate
\begin{equation}\label{8.17}
 \| \frac{1}{t} \frac{e^{i \gamma(1/t)} }{\lambda(1/t)} Q(\frac{x }{t \lambda(1/t)}) (e^{i |x|^{2}/4t} - 1) \|_{L^{2}}.
\end{equation}

For any $k \geq 0$, $\lambda(s) \sim 2^{-k}$ for all $s \in I(k)$. Furthermore, $\| \epsilon(t) \|_{L^{2}} \rightarrow 0$ as $t \nearrow 0$ implies that there exists a sequence $c_{k} \nearrow \infty$ such that
\begin{equation}\label{8.18}
|I(k)| \geq c_{k}, \qquad \text{for all} \qquad k \geq 0.
\end{equation}
Therefore, there exists $r(t) \searrow 0$ as $t \nearrow 0$ such that
\begin{equation}\label{14.25}
\lambda(t) \leq t^{1/2} r(t), \qquad \text{so} \qquad \lambda(1/t) \leq t^{-1/2} r(1/t).
\end{equation}
Therefore, since $Q$ is rapidly decreasing,
\begin{equation}\label{8.20}
\lim_{t \searrow -\infty} \| \frac{1}{t \lambda(1/t)} Q(\frac{x }{t \lambda(1/t)}) \frac{|x|^{2}}{4t} \|_{L^{2}} = 0,
\end{equation}
as well as
\begin{equation}\label{14.27}
\lim_{t \searrow -\infty} \| \frac{1}{t^{d/2} \lambda(1/t)^{d/2}} Q(\frac{x }{t \lambda(1/t)}) (e^{i |x|^{2}/4t} - 1)\|_{L^{2}} = 0,
\end{equation}
Therefore, $v$ is a solution that blows up backward in time at $\inf(I) = -\infty$ and $\| v \|_{L^{2}} = \| Q \|_{L^{2}}$. Therefore, by Theorem $\ref{t8.1}$, $v$ is a soliton, and $u$ is a pseudoconformal transformation of the soliton.

\section{Acknowledgements}
During the writing of this paper, the author was partially supported by NSF grant DMS-2153750.

\bibliography{biblio}
\bibliographystyle{alpha}
\end{document}